\let\emptyset\varnothing
\newtheorem{theorem}{Theorem}[section]
\newtheorem{proposition}[theorem]{Proposition}
\newtheorem{corollary}[theorem]{Corollary}
\newtheorem{lemma}[theorem]{Lemma}
\newtheorem{remark}[theorem]{Remark}
\newtheorem{definition}{Definition}
\newtheorem*{theorem*}{Theorem}
\newtheorem*{proposition*}{Proposition}
\newtheorem*{corollary*}{Corollary}
\newtheorem*{lemma*}{Lemma}
\newtheorem*{remark*}{Remark}
\newtheorem*{claim*}{Claim}
\newtheorem*{definition*}{Definition}
\numberwithin{equation}{section}
\newcommand{\caA}{{\mathcal A}}
\newcommand{\caC}{{\mathcal C}}
\newcommand{\caF}{{\mathcal F}}
\newcommand{\caH}{{\mathcal H}}
\newcommand{\caJ}{{\mathcal J}}
\newcommand{\caL}{{\mathcal L}}
\newcommand{\caN}{{\mathcal N}}
\newcommand{\caP}{{\mathcal P}}
\newcommand{\caQ}{{\mathcal Q}}
\newcommand{\caR}{{\mathcal R}}
\newcommand{\caS}{{\mathcal S}}
\def\bbone{{\mathchoice {\rm 1\mskip-4mu l} {\rm 1\mskip-4mu l} {\rm 1\mskip-4.5mu l} {\rm 1\mskip-5mu l}}}
\newcommand{\bbE}{{\mathbb E}}
\newcommand{\bbN}{{\mathbb N}}
\newcommand{\bbP}{{\mathbb P}}
\newcommand{\bbR}{{\mathbb R}}
\newcommand{\bbZ}{{\mathbb Z}}
\newcommand{\e}[1]{\,{\rm e}^{#1}\,}
\newcommand{\Or}{{\rm Or}}
\newcommand{\Orb}{{\rm Orb}}
\newcommand{\fc}[1]{{\bar {#1}}}
\renewcommand{\sc}[1]{{\hat {#1}}}
\newcommand{\eps}{\varepsilon}
\newcommand{\dd}{{\rm d}}
\begin{document}

\title{Scaling limit of a self-avoiding walk \\ 
interacting with spatial random permutations }
\author[]{Volker Betz}
\author[]{Lorenzo Taggi}
\affil[]{Technische Universit\"at Darmstadt,  Germany}
\date{} 

\maketitle

\abstract{
We consider nearest neighbour spatial random permutations on $\bbZ^d$. 
In this case, the energy of the system is proportional the sum of all cycle lengths, and the system can be interpreted 
as an ensemble of edge-weighted, mutually self-avoiding loops. The 
constant of proportionality, $\alpha$, is the order parameter of the model. 
Our first result is that in a parameter regime of edge weights 
where it is known that a single self-avoiding loop is weakly space filling, 
long cycles of spatial random permutations are still exponentially unlikely. 
For our second result, we embed a self-avoiding walk into a background of spatial 
random permutations, and condition it to cover a macroscopic distance. 
For large values of $\alpha$ (where long cycles are very unlikely) we show that 
this walk collapses to a straight line in the scaling limit, and give bounds on 
the fluctuations that are almost sufficient for diffusive scaling. For proving our 
results, we develop the concepts of spatial 
strong Markov property and iterative sampling 
for spatial random permutations, which may be of independent interest. Among other 
things, we use them to show exponential decay of correlations for large values of 
$\alpha$ in great generality. 
}

\vspace{6mm}
\noindent  {\it Keywords:}  Self-avoiding walks, spatial random permutations.

\rhead{ \large{ \textit{1 \hspace{0.7cm} INTRODUCTION}}}
\section{Introduction}

Self-avoiding random walks are by now a classical topic of 
modern probability theory, although many questions still remain 
to be answered; we refer to the classic book \cite{MadrasSlade}
and the more recent survey \cite{Slade2011}. A variant of 
self-avoiding walks are self-avoiding polygons (see e.g.\ 
\cite{Hammersley1961}), where the last
step of the self-avoiding walk has to come back to the point 
of origin. 

Spatial random permutations, on the other hand, 
are a relatively recent concept. They were originally introduced due to 
their relevance for the theory of Bose-Einstein condensation 
\cite{Betz4, BU_Ph, Uel06}, 
but are of independent mathematical interest. The purpose of the present paper 
is to view spatial random permutations as systems of mutually self-avoiding 
polygons and compare the behaviour of a selected cycle of a spatial random 
permutation to the one of a self-avoiding walk or polygon. Put differently, 
a selected cycle of a spatial random permutation can be viewed as a self-avoiding 
polygon embedded into a background of other self-avoiding polygons, and we are 
interested in the effect that this embedding has. For this purpose, we restrict 
to nearest neighbour spatial random permutations, as they are most closely related
to self-avoiding walks. For a finite subset $\Lambda_n$ of $\bbZ^d$, the model 
is defined by the probability space $\caS_{\Lambda}$ consisting of all bijective 
maps $\pi$ on $\Lambda$ with the property that $|\pi(x)-x| \in \{0,1\}$. The 
probability measure (with order parameter $\alpha$) is given by assigning 
the element $\pi \in \caS_\Lambda$ the energy 
$
\caH(\pi) := \sum_{x \in \Lambda} | \pi(x) - x |,
$
and the probability 
\[
\bbP_\Lambda(\{\pi\}) = \frac{1}{Z(\Lambda)} \e{-\alpha \caH(\pi)},
\]
where $Z(\Lambda)$ is the partition function (normalising constant).
 
The most important questions in spatial random permutations concern the length 
of their cycles, in particular the existence of macroscopic cycles. It is known \cite{Betz4}
that the probability that the origin is in a cycle of length larger than $k$ 
decays exponentially with $k$, uniformly in the volume $\Lambda$. 
It is expected that for dimensions $d=3$ and higher, there exists a critical value 
$\alpha_c$ of the order parameter so that for $\alpha < \alpha_c$, the cycle
containing the origin (or any selected point) is of macroscopic length with 
positive probability. In $d=2$, on the other hand, only a Kosterlitz-Thouless 
phase transition is expected, meaning that the probability of the cycle 
being larger than $k$ decays exponentially if $\alpha > \alpha_c$ but 
algebraically otherwise.  
While there is good numerical evidence for the existence of long cycles in 
$d \geq 3$ \cite{Gandolfo, Grosskinsky} and the Kosterlitz-Thouless transition \cite{Betz} in $d=2$, actually proving any positive 
statement about the existence of long cycles is the great unsolved problem of the 
theory of spatial random permutations. The only case where such a statement is 
known is for an annealed version of the model (with a slightly different energy)
\cite{Betz3,Betz5}.
The argument there relies on explicit calculations using Fourier transforms; all 
attempts to get away from this exactly solvable situation have so far failed. 
In \cite{BU_Ph} a non-rigorous argument is made that the study 
of models of non-spatial permutations with cycle weights may be useful,
and such models have received some attention recently 
\cite{Betz2, BoZei, ErcUel}. 

In the context of self-avoiding walks, the concept of a phase transition from 
short to long loops is present in the following results. Consider a 
single step-weighted, i.e.\ fix a sequence of growing subsets 
$\Lambda_n$  (e.g. cubes) of $\bbZ^d$, for each $n$ 
two points $a$ and $z$ at opposite ends of $\Lambda_n$, and 
consider the set of all self-avoiding walks starting in $a$ and 
ending in $z$. Let $\alpha \in \bbR$, and assign to each such 
self-avoiding walk $X$ the weight $\exp(-\alpha |X|)$, where $|X|$
is the number of steps that $X$ takes. Write $\mu$ for the 
connective constant of the $d$-dimensional cubic lattice. 
When $\alpha > \log \mu$, it is known that the shape of 
$X$ converges to a straight line as $n \to \infty$, when scaled
by $1/n$. Actually, when scaled by $1/n$ 
in the direction of $a-z$ and by $n^{-1/2}$ in the directions
perpendicular to this vector, $X$ converges to a Brownian Bridge. 
These results are implicit in the works \cite{Chayes,Ioffe} 
and have been worked out by Y.\ Kovchegov in his thesis 
\cite{Kovchegov}. 

For $\alpha < \log \mu$, on the other hand, the results are 
entirely different. As Dominil-Copin, Kozma and Yadin have recently shown 
\cite{Copin}, in this case the rescaled self-avoiding walk becomes 
\textit{weakly space filling}, 
meaning that it will only leave holes of logarithmic size in the graph
\cite{Copin}. Their results also hold for the self-avoiding loop, i.e.\ in 
the case where $a$ and $z$ are chosen to be the same site. Note that for 
$\alpha > \log \mu$, the self-avoiding \textit{loop} will converge to a point
in the scaling limit. 

It is therefore a natural question what happens when the self-avoiding loop 
is embedded into a background of other self-avoiding loops, i.e.\ in the 
case of spatial random permutations. The proof of the existence of weakly
space filling cycles would be particularly interesting, as this would imply
at least imply that the expected length of the cycle is infinite, and thus a 
a phase transition. Unfortunately, this is not what we can show. 
Instead, we give a somewhat negative result: we show that 
{\em if} there is a regime of space filling cycles, 
it must start at lower $\alpha$ than for the case of the self-avoiding polygon. 
More precisely, in the case where $G$ is a subgraph of a vertex transitive graph, 
and when $\mu$ is the {\em cyclic} connective constant of that graph, then
we identify in Theorem \ref{theo1} an $\alpha_0 < \log \mu$ so that for all $\alpha > 
\alpha_0$, and uniformly in the size 
of $G$, the length of a cycle through a given point has exponential tails. Thus
in the interval $(\alpha_0, \log \mu)$ the single self-avoiding loop is weakly 
space filling
while the self-avoiding loop embedded into an ensemble of other such loops is very 
short. 

For our second result, we restrict to the case where $\Lambda_n = [0,n] \times
[-n/2,n/2)^{d-1}$ and impose periodic boundary conditions on all 
except the first coordinate. We embed into the spatial random permutation 
a self-avoiding path starting in 
$0$ and conditioned to end in a point $z$ on the opposite side of $\Lambda$, see Figure \ref{Fig:pi}.  
\begin{figure}
    \centering
\includegraphics[scale=0.44]{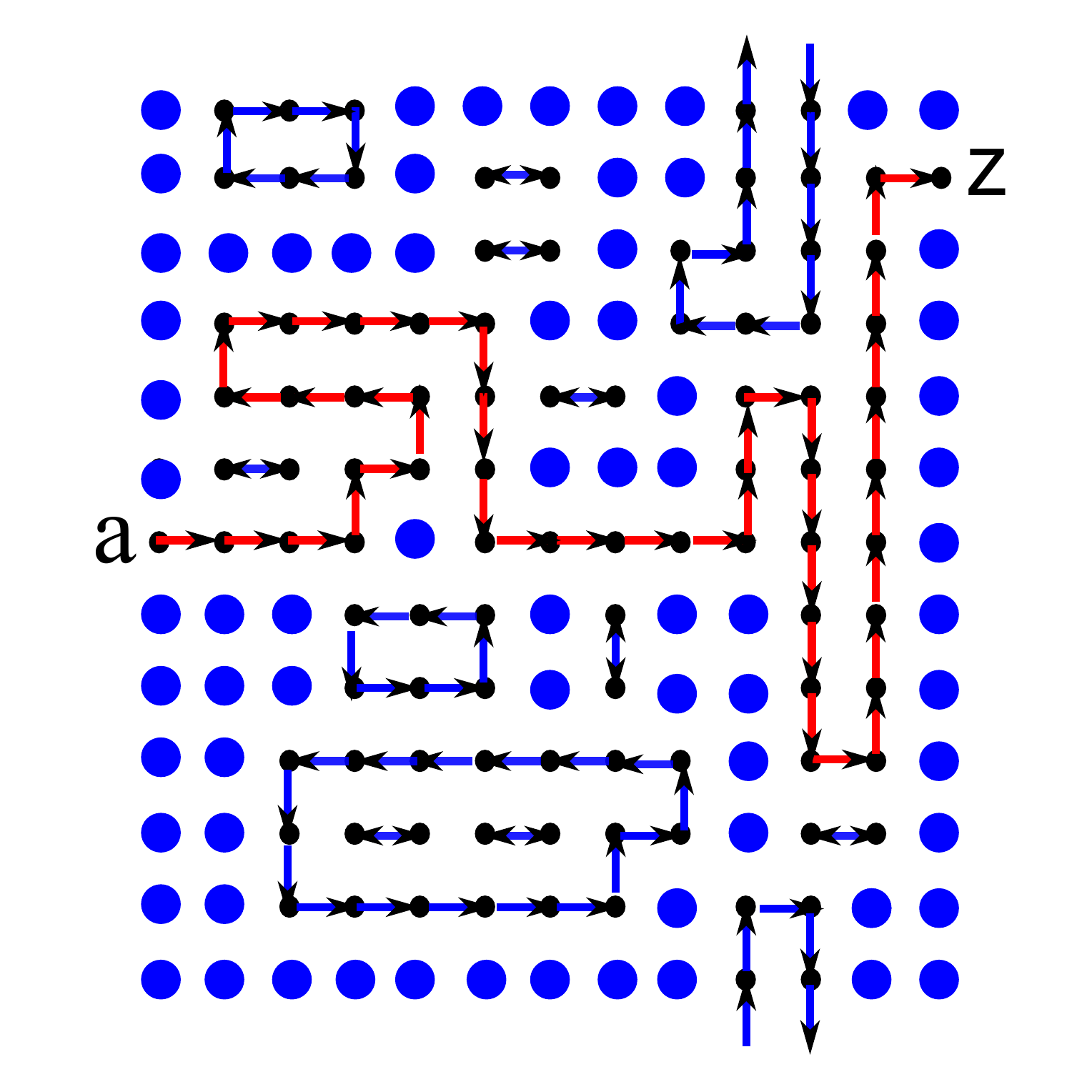}
\caption{
Representation of a bijection with a forced
open cycle between $a$ and $z$,
when $\Lambda \subset \mathbb{Z}^2$ is a box with cylinder
boundary conditions.
If $\pi(x)=x$, then a circle is drawn at $x$, while if $\pi(x)$ is a neighbour of $x$, 
then an arrow is directed from $x$ to $\pi(x)$.}
\label{Fig:pi}
\end{figure}
In this situation, we (almost) recover the results of \cite{Kovchegov}, 
i.e.\ we show that for sufficiently large $\alpha$, 
the self-avoiding walk starting in $0$ collapses to a straight line in the 
scaling limit, as $n \to \infty$. Unlike in the case of the 
single self-avoiding walk, we do not have a good  quantitative estimate on the 
threshold above which this behaviour holds, 
and we cannot quite control the fluctuations 
well enough to prove the convergence to a Brownian bridge. The reason is that
the background of cycles introduces additional correlations that are very 
hard to control. This can be explained well by investigating  the strategy 
of proof for the case of the self-avoiding 
walk, and discussing where it fails in our case. 

The basic idea in the case of the self-avoiding walk (which we indeed adapt and extend) is to introduce regeneration 
points. The walk is forced to connect $0$ to the other 
side of the box; let us assume that it is the other side with respect to the 
first coordinate. 
A regeneration point is a point where the 'past' of the walk 
is entirely to the left (has smaller first coordinate) of that point, while the 
'future' is entirely to the right. It is easy to see that if there are no 
regeneration points in an area of a given (horizontal) width $w$, then 
the self-avoiding path has to have at least $3 w$ steps taking place in this
area. Since every step that the walk takes introduces 
an additional factor of $\e{-\alpha}$ to its weight, and since $\alpha$ is large, 
it is possible to show that there must be many regeneration points, more precisely 
that the probability to not find any regeneration point in a vertical 
strip of width 
$K$ steps decays exponentially in $K$, 
uniformly in $n$. On the other hand, regeneration points provide fresh starts 
for the walk (hence the name): when conditioned to start at a regeneration point, 
the resulting model is again a self-avoiding walk. The regeneration points 
themselves thus form random walk with iid steps, 
and by the exponential bound discussed above we 
have excellent control on the step size of that process. 
All scaling results now follow from standard limit theorems for random walks. 

This argument breaks down at several places when going to spatial random 
permutations. First of all, the energy of the system is an extensive quantity 
for all values of $\alpha$, i.e. it grows proportional to the volume. 
Since the energy  
of the embedded random walk only grows proportional to its length, in the case 
where it collapses to a straight line its energy will be a subdominant 
compared to  energy provided by the environment. Therefore, 
it will be much harder to argue that a broad vertical strip with no regeneration
points is unlikely just because the walk would need to take many steps. 
Secondly, even if we do have a regeneration point, just considering the 
area to the right of it will not decouple the past from the future. 
The reason is that with probability one (in the limit $n \to \infty$) a short 
cycle from the background will cross the vertical hyperplane containing the 
regeneration point, and introduce correlations. 

We solve both problems by developing a method for estimating the decay of 
correlations for spatial random permutations in the case of large 
$\alpha$. Since we expect this method to be of independent interest, and since 
it does not complicate matters much, we develop it for general graphs. The method
is based on a strong version of the spatial Markov property, and on iterative 
sampling, and is strong enough to provide exponential decay of correlations 
in a rather general context. We then extend the concept of regeneration points
and introduce (random) regeneration surfaces, that serve the same purpose as 
the (deterministic) hyperplanes that separate regions in the self-avoiding 
walk case. See Figure \ref{Fig:pi} b) 
for an illustration. While our estimates are strong enough to give us the 
correct scaling limit, they are (just) not strong enough to get down to diffusive 
scaling. The main reason is that  
we can only show that consecutive regeneration points (and surfaces) have a 
distance of order $\log n$ with high probability, not a finite one as 
in the case of the self-avoiding walk.

There are other models where the scaling limit of lower dimensional structures 
interacting with an environment is studied. These include the 
Ising model \cite{Campanino1}, Bernoulli Percolation \cite{Campanino2},
Self-Avoiding Walks \cite{Chayes, Ioffe} and Random Cluster models 
\cite{CIV08}. All of these approaches rely on some variant of the 
Ornstein-Zernike theory \cite{Ornstein}, and they all need correlation
inequalities of some kind. For example, in the Ising model, the 
Edwards-Sokal coupling \cite{Edwards} can be used. No correlation 
inequalities are known in spatial random permutations, and  indeed we 
expect that finding such inequalities would have a significant impact 
on the subject area. Our method shares some features of the 
Ornstein-Zernike approaches above, but does not require correlation 
inequalities in order to apply. Instead, we obtain decay of 
correlations by iterative sampling, although this method is not 
(yet?) quite strong enough in order for us to obtain diffusive 
scaling.

The significance of the model of spatial random permutations 
with a forced cycle goes far beyond the situation that we 
describe here. In \cite{Uel06} it is shown that (in a suitable 
variant of spatial random permutations) the ratio of 
the partition functions of a system with a forced cycle and one 
without can be used to detect Bose-Einstein condensation: if 
this ratio stays positive uniformly in the volume and the spatial
separation of the two endpoints of the forced cycle, this is 
equivalent to the presence of off-diagonal long range 
order \cite{PenOns}, which itself is equivalent to Bose-Einstein 
condensation. Some progress has been made 
in understanding related models, e.g.\ the 
Heisenberg model and its connection to ensembles of mutually self-
avoiding walks \cite{Toth}, or
the random stirring models introduced by Harris \cite{Harris} and 
further analysed in
\cite{Berestycki, Schramm, KoteckiMilosUeltschiet}. The big question 
about the existence of long loops, however, remains open. 

On the probability side, our model is closely related to the  
connection is to the  loop $O(n)$ model,
which has been introduced in \cite{Domany}.
In this model a loop configuration  $\omega$ is an undirected spanning subgraph 
of a graph $G$ such that every vertex of this subgraph has degree zero or two.
The weight of a loop configuration is proportional to $e^{- \alpha \,  o(\omega)} \, n^{L(\omega)}$, 
where $o(\omega)$ is the number of edges in $\omega$, $L(\omega)$ 
is the number of loops and $n$ is a positive real.
The case $n=0$ corresponds formally to self-avoiding walk if one forces a path in the system 
in addition to the loops. 
The case $n=1$ corresponds to the low-temperature representation of the Ising model.
If viewed as an ensemble of cycles, spatial random permutations are 
intimately related to the loop 
$O(n)$ model with $n=2$, since each cycle of the permutation 
admits two possible orientations. 
The two models would be equivalent if in spatial random
permutations cycles of length two were forbidden.
On the hexagonal lattice, the loop $O(n)$ model has been conjectured to undergo
a Kosterlitz-Thouless phase transition at the critical threshold
$\log \big(  \sqrt{2 + \sqrt{2 - n}  } \big)$ when $n \leq 2$ \cite{Nienhuis}.
This is compatible with our general finding that on every vertex-transitive graph 
the critical threshold of spatial random permutations, which corresponds
more or less to the $n=2$ case, is strictly less than the critical
threshold for the self-avoiding walk, corresponding to the $n=0$ case.
Furthermore, it has been conjectured that only short cycles 
are observed at all values of $\alpha$ when $n>2$.
This has been rigorously proved only for $n$ large enough in the 
 article of Duminil-Copin, Peled, Samotij, and Spinka in \cite{Copin2},
who also provide details on the structure of the typical configurations and
provide evidence for the occurrence of a phase transition.
Exploring the properties of the model at low values of $n$ is of great interest
and entirely open. 
Most of the proofs that we present in this paper,
can be reproduced for the loop $O(n)$ model for all values of $n$
and $\alpha$ large enough, without 
the restriction of considering the hexagonal lattice
(see Remark \ref{remark:loopO(n)} below for further comments on this).

Our paper is organized as follows. We give precise definitions and 
state our results in Section \ref{sect:definitions}. 
In Section \ref{sect:cyclelength} we 
prove our result on non-existence of long cycles, and provide
various estimates on the partition functions over different domains
which we will need later.  
In Section \ref{sect:cyclegrowth} we 
discuss the spatial Markov property and iterative
sampling, and derive our results about 
exponential decay of correlations.
In Section \ref{sect:proofoftheo2}, we give the proof of our main 
result,  Theorem \ref{theo2}, using the results of the previous 
sections.

\rhead{ \large{ \textit{2 \hspace{0.7cm} DEFINITIONS AND RESULTS}}}
\section{Definitions and main results}
\label{sect:definitions}

We consider a finite simple graph $G = (V,E)$. 
A permutation on $G$ is a bijective map $\pi: V \to V$ so that for all $x \in V$, 
either $\pi(x)=x$ or $\{x,\pi(x)\} \in E$. 
We write $\mathcal{S}_V$ for the set of all permutations on $G$, omitting the dependence on
the edge set in the notation. Also, when $U \subset V$, we  write $\mathcal{S}_U$ for 
the set of permutations on the subgraph $(U, E_U)$ generated by $U$, i.e.\ where 
$E_U := \{(x,y) \in E: x \in U, y \in U\})$. 

For a given $\pi \in \mathcal{S}_V$, we define its \textit{energy} by 
\begin{equation}
\label{eq:energy}
\mathcal{H}_{V}(\pi) = \sum\limits_{x \in V}  \mathbbm{1}\{   \pi(x) \neq x \},
\end{equation}
where the indicator above is $1$ if $\pi(x) \neq x$ 
and $0$ if $\pi(x) = x$, and its probability as
\begin{equation}
\label{eq:measureG}
\mathbb{P}_V (\pi) = \frac{\exp \big( - \alpha \, \mathcal{H}_{V}(\pi)  \big) }{Z(V)}.
\end{equation}
Above, $\alpha \in \mathbb{R}$ controls the preference of random permutations to have fixed 
points, and $Z(V)$ is called \textit{partition function}.

The expression \eqref{eq:energy} is less general than it could be. By replacing the 
indicator function in that formula with  
edge weights $d:E \to E, \{x,y\} \mapsto d(x,y)$, we can generalize the definition of 
random permutations on graphs sufficiently so that classical cases including the quadratic 
jump penalization (see e.g. \cite{Betz,Betz4}) are covered. While some of our results below 
(most notably the iterative sampling procedure) can be adapted to hold for this general 
situation, our main results compare the behavior of random permutations to the behavior 
of self-avoiding paths. Therefore, we prefer to stick with the narrower definition 
\eqref{eq:energy}.

The most interesting objects in spatial random permutations are their cycles. For 
$\pi \in \mathcal{S}_V$ and $z \in V$, the \textit{cycle of $\pi$ containing $z$} is the 
directed graph on  
$$
\{ \, \, \pi^i(z) \in V \, \, \, : \, \, \, i \in \mathbb{N} \, \, \}
$$
with edge set 
$$
\{   \, \, (\pi^i(z), \pi^{i+1}(z))  \in E \, \, : i \in \mathbb{N} \, \, \}.
$$
We will denote it by $\gamma_z(\pi)$. We will regularly abuse the notation and use the 
symbol $\gamma_z(\pi)$ also to denote the vertex set of the cycle, viewed as a subset of $V$.
We denote the total number of edges of $\gamma_z(\pi)$ by $\| \gamma_z \|$
and we refer to it as \textit{length of $\gamma_z$}. In the special case where 
$\pi(z)=z$, $\gamma_z(\pi)$ has vertex set $\{z\}$ and empty edge set, therefore length $0$.
It is known \cite{Betz4} that there exists some $\alpha_0> 0$ so that for all $\alpha > 
\alpha_0$, long cycles are exponentially
unlikely. The first result of the present paper is to sharpen this statement by providing 
some information about the value of $\alpha_0$. To state it, let us recall the definitions
of connective constant and cyclic connective constant. 

A \textit{self-avoiding path} in a graph $G$ is a finite directed 
subgraph $(U,E')$ of $G$ such that 
there is an enumeration  $(x^1, x^2, \ldots x^n)$ of $U$ with the property that 
\[
E' = \{( x^i, x^{i+1} ) : 1 \leq i \leq n-1\}.
\]
A \textit{cycle} in $G$ is a finite directed subgraph $(U,E^{\prime})$ of $G$ such that
there is an enumeration  $(x^1, x^2, \ldots x^n)$ of $U$ with the property that 
\[
E' = \{( x^i, x^{i+1} ) : 1 \leq i \leq n-1\} \cup \{  \, ( x^n, x^1) \, \}.
\]
Note that by considering the graph $(U,E')$ as directed,
we give $\gamma$ an orientation. For an infinite, 
vertex transitive 
graph ($\mathbb Z^d$ with the nearest neighbor edge structure 
being the most important example), we single out a vertex $0 \in V$ and call it the origin. 
We write $SAW_n$ (respectively $SAP_n$) for the set of all self-avoiding paths (respectively  cycles)
starting from $0$,  with $n$ edges.  Thus, we have that $SAW_0 = SAP_0 = \{0\}$.
Then the limits
\begin{equation}
\label{eq:cycleconnective}
\mu_{G} = \limsup\limits_{n \rightarrow \infty} \sqrt[n]{ |SAP_n|},
\end{equation}
and  
\begin{equation}
\label{eq:connective}
\mu^{\prime}_{G} = \lim_{n \rightarrow \infty} \sqrt[n]{ |SAW_n|}
\end{equation}
exist. For (\ref{eq:connective}) this follows from a sub-additivity argument \cite{Hammersley1954}, while for \eqref{eq:cycleconnective} it follows from the fact that
$|SAP_n| \leq |SAW_n|$. The latter also immediately shows 
$\mu_{G} \leq \mu^{\prime}_{G}$ for all vertex-transitive graphs $G$. 
Hammersley \cite{Hammersley1961} proved the remarkable fact that 
\[
\mu^{\prime}_{\mathbb{Z}^d} = \mu_{\mathbb{Z}^d}.
\]
We are now ready to state our first main result.
\begin{theorem}  
\label{theo1}
Let $G$ be any infinite vertex-transitive graph of bounded degree.
Let $\alpha_0$ be the unique solution of the equation 
\begin{equation}\label{eq: def of alpha c}
\alpha + \tfrac12 \log (1 + \exp(- 2 \alpha)) = \log \mu_G.
\end{equation}
Then for all $\alpha > \alpha_0 $  there exist constants 
$C_0(\alpha), c_0(\alpha) > 0$ such that for any finite subgraph of $G$ generated by 
$U \subset V$, for all $z \in U$, and for all $\ell \in \mathbb N$,
 \begin{equation}
 \label{eq:theo1}
 \mathbb{P}_{U}( \| \gamma_z \| >\ell  ) \leq  \hypertarget{C_0}{C_0(\alpha)}  \exp \big (  - \hypertarget{c_0}{c_0(\alpha)} \, \,   \ell  \big ) .
\end{equation}
Moreover, we can choose $C_0(\alpha)$ and $c_0(\alpha)$ so that $\lim_{\alpha \to \infty} c_0(\alpha) / \alpha  = 1$ and 
$\limsup_{\alpha \to \infty} C_0(\alpha) < \infty$. 
\end{theorem}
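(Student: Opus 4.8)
The plan is a path-surgery and union-bound argument, in the spirit of the known non-existence result of \cite{Betz4}, with one extra ingredient that pushes the threshold below $\log\mu_G$: when we excise the cycle $\gamma_z$ from a configuration we do not simply delete its vertices, but repopulate them with independent transpositions placed along alternating edges of the cycle, and this repopulation produces precisely the factor responsible for the term $\tfrac12\log(1+e^{-2\alpha})$ in \eqref{eq: def of alpha c}. Note first that \eqref{eq: def of alpha c} simplifies to $e^{2\alpha}+1=\mu_G^2$, so that $\alpha>\alpha_0$ is equivalent to $\mu_G<\sqrt{e^{2\alpha}+1}=e^\alpha\sqrt{1+e^{-2\alpha}}$; this is the inequality we must exploit.

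Fix a finite $U\subseteq V$, a site $z\in U$, and an admissible value $\gamma$ of $\gamma_z(\pi)$, i.e.\ an oriented self-avoiding cycle through $z$ with $k:=\|\gamma\|\geq 2$ edges (every such cycle lying in $U$ is admissible, realised e.g.\ by its cyclic shift together with the identity elsewhere). The first step is the surgery identity
\[
\mathbb{P}_U(\gamma_z(\pi)=\gamma)\;=\;\frac{e^{-\alpha k}\,Z\bigl(U\setminus V(\gamma)\bigr)}{Z(U)},
\]
which follows because $\pi\mapsto\pi|_{U\setminus V(\gamma)}$ is a bijection from $\{\pi\in\mathcal{S}_U:\gamma_z(\pi)=\gamma\}$ onto $\mathcal{S}_{U\setminus V(\gamma)}$ (its inverse glues back the cyclic shift on $V(\gamma)$), under which the energy drops by exactly $k$, all $k$ vertices of $\gamma$ being non-fixed. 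The second step lower-bounds the denominator: disjointly combining an arbitrary spatial permutation of $U\setminus V(\gamma)$ with an arbitrary spatial permutation of the subgraph induced on $V(\gamma)$ yields a spatial permutation of $U$ with additive energy, so $Z(U)\geq Z(U\setminus V(\gamma))\cdot Z^{V(\gamma)}$. On $V(\gamma)$ one has at least the $2^{\lfloor k/2\rfloor}$ configurations obtained by independently choosing, for each of the $\lfloor k/2\rfloor$ pairwise-disjoint consecutive cycle-edges $\{x^1,x^2\},\{x^3,x^4\},\dots$, either a transposition (weight $e^{-2\alpha}$) or two fixed points (weight $1$); hence $Z^{V(\gamma)}\geq(1+e^{-2\alpha})^{\lfloor k/2\rfloor}$ and therefore, uniformly in $U$,
\[
\mathbb{P}_U(\gamma_z(\pi)=\gamma)\;\leq\;e^{-\alpha k}\,(1+e^{-2\alpha})^{-\lfloor k/2\rfloor}.
\]

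In the third step I sum over $\gamma$: by vertex transitivity of $G$ the number of oriented self-avoiding cycles through $z$ with $k$ edges lying in $U$ is at most $|SAP_k|$, so $\mathbb{P}_U(\|\gamma_z\|=k)\leq|SAP_k|\,e^{-\alpha k}(1+e^{-2\alpha})^{-\lfloor k/2\rfloor}$, and
\[
\mathbb{P}_U(\|\gamma_z\|>\ell)\;\leq\;\sqrt{1+e^{-2\alpha}}\;\sum_{k>\ell}|SAP_k|\,\bigl(e^{2\alpha}+1\bigr)^{-k/2}.
\]
Given $\alpha>\alpha_0$, pick $\eps>0$ with $\mu_G+\eps<\sqrt{e^{2\alpha}+1}$ and use $|SAP_k|\leq C_\eps(\mu_G+\eps)^k$ for all $k$ (from \eqref{eq:cycleconnective}): the series becomes geometric with ratio $\rho:=(\mu_G+\eps)/\sqrt{e^{2\alpha}+1}<1$, yielding \eqref{eq:theo1} with $c_0(\alpha)=-\log\rho$ and $C_0(\alpha)$ proportional to $C_\eps\,\rho/(1-\rho)$. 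For the asymptotics, keep $\eps$ fixed as $\alpha\to\infty$: then $\rho\to0$, so $C_0(\alpha)\to0$ (hence $\limsup_{\alpha\to\infty}C_0(\alpha)<\infty$) while $c_0(\alpha)-\alpha=\tfrac12\log(1+e^{-2\alpha})-\log(\mu_G+\eps)$ stays bounded, so $c_0(\alpha)/\alpha\to1$.

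The argument is essentially routine once the surgery identity is in place; the one genuinely load-bearing step is the repopulation giving the factor $(1+e^{-2\alpha})^{\lfloor k/2\rfloor}$, which is exactly what separates $\alpha_0$ from $\log\mu_G$ (the crude bound $Z^{V(\gamma)}\geq1$ only recovers the known threshold $\log\mu_G$). The points requiring care are all bookkeeping: oriented versus unoriented cycles, the treatment of $2$-cycles, that the floor $\lfloor k/2\rfloor$ costs only the harmless constant $\sqrt{1+e^{-2\alpha}}\leq\sqrt2$, and the standard passage from the $\limsup$ in \eqref{eq:cycleconnective} to the bound $|SAP_k|\leq C_\eps(\mu_G+\eps)^k$ valid for every $k$.
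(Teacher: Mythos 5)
Your argument is correct and follows essentially the same route as the paper: the surgery identity $\mathbb{P}_U(\gamma_z=\gamma)=e^{-\alpha\|\gamma\|}Z(U\setminus\gamma)/Z(U)$, the lower bound $Z(U)\geq Z(U\setminus\gamma)\,Z(\gamma)$ combined with $Z(\gamma)\geq(1+e^{-2\alpha})^{\lfloor\|\gamma\|/2\rfloor}$ from alternating transposition pairs, and the union bound over $SAP_k$ with a small $\varepsilon$ slack in the connective constant are exactly the ingredients the authors use (their Proposition \ref{prop:estimategamma} plus the partition-function factorisation from Proposition \ref{prop:Markovspatial}(i)). The only differences are cosmetic bookkeeping: you carry the floor explicitly at cost of a harmless $\sqrt{1+e^{-2\alpha}}$ factor, and you phrase the large-$\alpha$ asymptotics in terms of the ratio $\rho\to 0$ rather than fixing $\delta=1$, but the conclusions agree.
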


It is interesting to compare the above result with the findings of 
Duminil-Copin, Kozma and Yadin
\cite{Copin}, who study self-avoiding walks on $\mathbb Z^d$
with an energy proportional to their length. To connect to the present paper, it is best to 
view them as random permutations conditional on not having any cycles except the one through
the origin. In \cite{Copin} it is shown that when $\alpha < \log \mu'_{\mathbb{Z}^d}$, the resulting 
self-avoiding cycle is {\em weakly space filling}, in particular its expected length is 
infinite. Theorem \ref{theo1} shows that without the conditioning on having just one cycle,
the situation is drastically different: since $\mu'_{\mathbb Z^d} = 
\mu_{\mathbb Z^d}$ and since the solution $\alpha_0$ of \eqref{eq: def of alpha c} is 
strictly smaller than $\log \mu_{\mathbb{Z}^d}$, in the interval $(\alpha_0, \log \mu_{\mathbb Z^d})$
the lonely self-avoiding cycle has infinite expected length 
(as the relevant subgraph becomes large), while the length of the 
cycle through the origin in ordinary spatial random permutations has exponential tails.

For our second main result, we restrict our attention to cylindrical subgraphs of 
$\bbZ^d$. For $n\in\bbN$, let 
\[
\Lambda_n := [0,n] \times (-n/2,n/2]^{d-1} \cap \bbZ^d.
\]
Elements of $\Lambda_n$ will be written in the form $x = (\fc{x}, \sc{x})$ with 
$\fc{x} \in \bbZ$ and $\sc x \in \bbZ^{d-1}$. We impose cylindrical boundary 
conditions on $[-n/2,n/2) \cap \bbZ^{d-1}$ (but not on $[0,n] \cap \bbZ$), 
and edges are then between nearest neighbours in $\Lambda_n$. We will denote 
the resulting graph with the same symbol $\Lambda_n$ if no confusion can arise. 
For a subset $A \subset \Lambda_n$, we will also write $A$ for the subgraph of 
$\Lambda_n$ that retains all of the edges where both endpoints lie in $A$. For 
$a,z \in A$, we define $\caS_A^{a \to z}$ as the set of maps $\pi: A \to A$ 
with the properties \\[1mm]
(i): $\pi$ is a bijection from $A \setminus \{z\}$ to $A \setminus \{a\}$.\\[1mm]
(ii): $\pi(z) = z$\\[1mm]
(iii): $|\pi(x)-x| \leq 1$ for all $x \in A$. \\[1mm] 
It is easy to see that $\caS_A^{a \to z} = \emptyset$ if the vertex $a$ is disconnected 
from $z$ in the graph $A$, and that $z = \lim_{n\to\infty} \pi^n(a)$ otherwise. 
For given $\pi \in \caS_A^{a \to z}$, we will always write 
$\gamma(\pi) = \Orb_\pi(\{a\})$. We have 
$\pi(A) = A \setminus \{a\}$, and 
$\gamma(\pi)$ is the trace of a self-avoiding walk starting 
at $a$ and ending in $z$ that is embedded in $\pi$. 
Thus the probability measure $\bbP_A^{a \to z}$ defined through 
\begin{equation} \label{open cycle measure}
\bbP_A^{a \to z} ( \{ \pi \}) = \frac{1}{Z^{a \to z}(A)} \exp \left( 
- \alpha \sum_{x \in A} |\pi(x)-x| \right), \qquad (\pi \in \caS_A^{a \to z}),
\end{equation} 
describes a step-weighted self-avoiding walk interacting with a background of 
spatial random permutations. We also define 
\[
\ell_j := \{ x \in \Lambda_n: \fc x = j \}, \qquad \text{and} \quad 
\caS_A^{a \to \ell_j} := \bigcup_{z \in \ell_n} \caS_{A \cup \{z\}}^{a \to z}.
\]
The probability measure on $\caS_A^{a \to \ell_n}$ will be 
\eqref{open cycle measure}, except that the normalisation is now given by  
$Z^{a \to \ell_m}(A) = \sum_{ z: \fc z = j} Z^{a \to z}(A)$. We can now state 
the main result of this paper. 

\begin{theorem} \label{theo2}
There exists $\alpha_0 > 0$, $D < \infty$ 
and $N \in \bbN$ so that for all $M > 0$, 
\[
\sup_{n \geq N, \alpha > \alpha_0} \bbP_{\Lambda_n}^{0 \to \ell_n} 
\Big( \max \{ | \hat y |: y \in \gamma \} > M \sqrt{n \log n} \Big) < 
D / M.
\]	
\end{theorem}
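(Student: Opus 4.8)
The plan is to carry out the regeneration programme outlined in the introduction, replacing the deterministic separating hyperplanes of the pure self-avoiding walk by random \emph{regeneration surfaces}, and then to control the transverse position of the embedded walk $\gamma$ at consecutive surfaces by a second-moment maximal inequality. Concretely: (1) define a regeneration surface as a cut $\Sigma$ of $\Lambda_n$ into a left part $\Lambda^-$ and a right part $\Lambda^+$ such that no background cycle crosses $\Sigma$ and $\gamma$ crosses $\Sigma$ exactly once, at a vertex $u$; (2) using the spatial strong Markov property developed in Section \ref{sect:cyclegrowth}, show that conditionally on the restriction of $\pi$ to $\Lambda^-$, the restriction to $\Lambda^+$ has law $\bbP^{u\to\ell_n}_{\Lambda^+}$, so that the successive crossing vertices $u_0=0,u_1,\dots,u_\tau\in\ell_n$ together with the slabs between consecutive surfaces form a renewal-type structure; (3) prove that consecutive regeneration surfaces lie at horizontal distance $O(\log n)$ with overwhelming probability, and that the horizontal width $W_k$ and the transverse extent $E_k$ of the $k$-th slab have exponential tails on scale $\log n$, uniformly in $n$ and in $\alpha>\alpha_0$; (4) set $\hat Y_k:=\hat u_k$, write $\hat Y_k=\sum_{j\le k}\Delta_j$, bound $\max_k|\hat Y_k|$ by a maximal inequality, and add the $O((\log n)^2)$ contribution of the excursions of $\gamma$ inside a single slab.

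\textbf{Existence and density of regeneration surfaces.} This is where the bulk of the work lies and is, I expect, the main obstacle. A flat hyperplane $\ell_j$ is essentially never clean: it meets $\sim n^{d-1}$ vertices, each in a short background cycle with positive probability, so with probability tending to $1$ some cycle crosses it. Hence $\Sigma$ must be a jagged surface inside a slab $[j,j+w]\times\cdots$ that routes around the background cycles meeting the slab. I would show such a $\Sigma$ exists as soon as, inside the slab, the set of ``occupied'' vertices — those in a background cycle meeting the slab, together with $\gamma$ — has no left--right crossing and $\gamma$ traverses the slab exactly once: the first condition produces a clean transverse separating surface by planar duality (and its higher-dimensional analogue), the second forces it to be crossed once. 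To bound the failure probability I would combine two ingredients, both uniform for large $\alpha$: first, a finite-energy/Peierls estimate — the conditional probability that a prescribed vertex is non-fixed is at most some $p'(\alpha)$ with $p'(\alpha)\to0$, so a fixed occupied self-avoiding path of length $\ell$ has probability $\le p'(\alpha)^\ell$, and a union bound over left--right paths in the slab gives failure probability $\le n^{d-1}(\mathrm{const}\cdot p'(\alpha))^{w}$; second, the energy/entropy argument surviving from the self-avoiding walk case, namely that if $\gamma$ crosses the slab three or more times it takes $\ge 3w$ steps there, costing a factor $e^{-c\alpha w}$. Both contributions are $\le n^{d-1}\rho(\alpha)^{w}$ with $\rho(\alpha)<1$, so the choice $w=C(\alpha)\log n$ makes them $\le n^{-11}$; a union bound over the $\le n$ positions of the slab then shows that with probability $\ge1-n^{-10}$ every gap between consecutive regeneration surfaces has width $\le C(\alpha)\log n$. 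Applying the same estimate to the slab immediately to the right of a given surface yields the exponential tail of $W_k$ on scale $\log n$; and since a slab of transverse extent $t$ forces $\gamma$ to take $\gtrsim t$ steps there, or else a background cycle of length $\gtrsim t$ is present (exponentially unlikely by Theorem \ref{theo1}), $E_k$ has exponential tails on scale $\max(W_k,\log n)$.

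\textbf{Assembling the bound.} On the event of probability $\ge1-n^{-10}$ that every $W_k\le C\log n$, there are $\tau$ slabs with $\sum_k W_k=n$, hence $\tau\le n$ and $\sum_k W_k^2\le C n\log n$. Since $|\Delta_k|\le E_k$, we get $\bbE[\Delta_k^2\mid W_k]\le \mathrm{const}\cdot W_k^2$ up to the negligible event of an anomalously large slab; moreover $\bbP^{0\to\ell_n}_{\Lambda_n}$ is invariant under the transverse reflection $\hat x\mapsto-\hat x$ fixing the origin, so $(\Delta_k)_k\stackrel{d}{=}(-\Delta_k)_k$ and in particular $\bbE[\hat Y_k]=0$; and $\Delta_j,\Delta_k$ are determined by the configuration in slabs $\gtrsim|j-k|\log n$ apart, so the exponential decay of correlations of Section \ref{sect:cyclegrowth} gives $|\bbE[\Delta_j\Delta_k]|\le\mathrm{const}\cdot(\log n)^2 e^{-c|j-k|}$. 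Summing, $\bbE[\hat Y_\tau^2]\le\mathrm{const}\cdot n\log n$, and a maximal inequality for mean-zero, weakly dependent sequences upgrades this to $\bbE[\max_{k\le\tau}\hat Y_k^2]\le\mathrm{const}\cdot n\log n$; Chebyshev then bounds $\bbP(\max_k|\hat Y_k|>\tfrac12 M\sqrt{n\log n})$ by $\mathrm{const}/M^2$. Inside each slab $\gamma$ deviates transversally by at most $E_k$, and a union bound over the $\le n$ slabs using the exponential tail of $E_k$ shows $\max_k E_k\le A(\log n)^2$ with probability $\ge1-n^{-10}$, which is $<\tfrac12 M\sqrt{n\log n}$ for $n\ge N$ and $M$ not too small. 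Hence $\max\{|\hat y|:y\in\gamma\}\le\max_k|\hat Y_k|+\max_k E_k$ exceeds $M\sqrt{n\log n}$ with probability $\le\mathrm{const}/M^2+2n^{-10}$; since $|\hat y|\le n/2$ always, the event is empty once $M\gtrsim\sqrt{n/\log n}$, so on the relevant range $n\gtrsim M^2$ and $2n^{-10}$ is negligible, and a suitable choice of $D$ (large enough that $D/M\ge1$ on the remaining small range of $M$) converts $\mathrm{const}/M^2+2n^{-10}$ into the asserted bound $D/M$, uniformly in $n\ge N$ and $\alpha>\alpha_0$.

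\textbf{Where it is hard.} The delicate point is steps (1)--(3): engineering the regeneration surfaces so that they are genuine stopping surfaces (measurable with respect to the left part) to which the spatial strong Markov property applies, while simultaneously proving that a clean separating surface exists in a slab of merely \emph{logarithmic} width despite the background being a strongly correlated field. The logarithmic — rather than constant — width is forced, since the failure probability must beat the $n^{d-1}$ entropy of transverse crossings of a slab; this is exactly why the final estimate carries a $\sqrt{\log n}$ over the diffusive scale, and why controlling the correlations of the increments $\Delta_k$ finely enough to reach a Brownian-bridge limit is beyond this method.
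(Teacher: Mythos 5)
Your overall architecture (a regeneration structure separating past from future, second moments of the transverse increments on scale $\log n$, a maximal inequality, plus an $O(\mathrm{poly}\log n)$ correction inside each slab) is the same as the paper's. But two of your key steps have genuine gaps, and they are exactly the points where the background of loops makes this model harder than the bare self-avoiding walk. First, your existence-and-density argument for regeneration surfaces rests on a conditional finite-energy bound: that a prescribed self-avoiding path of $\ell$ ``occupied'' vertices has probability at most $p'(\alpha)^{\ell}$. For spatial random permutations no such conditional estimate (nor any correlation inequality that would yield it) is available; Proposition \ref{prop:lowerboundpart} controls one site at a time relative to a \emph{deleted} domain, not conditionally on a non-trivial event, and the paper explicitly flags the absence of correlation inequalities as the central obstruction. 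The paper's substitute is entirely different: it builds the separating set as the minimal $\pi$-invariant, reflection-compatible set containing the past (via iterative sampling), bounds its size by a subcritical Galton--Watson comparison (Proposition \ref{prop:size of invariant set}), and obtains only a \emph{uniformly positive} probability $c$ that a given pre-regeneration point is a true regeneration point (Proposition \ref{prop:uniformlyPositive}); exponential-in-width decay of the failure probability, as you assert, is not achieved and is not needed — the paper iterates the positive-probability bound across $\sim k$ well-separated pre-regeneration points, decoupled by the strong Markov property, to get the polynomial tail $Ck^{-p}$ of Proposition \ref{moment bounds}. Likewise your ``energy/entropy argument surviving from the SAW case'' for the walk crossing the slab only once is not free: because the background energy is extensive, one must compare partition functions with and without the excursion, and this is the content of Proposition \ref{fundamental lemma} together with \eqref{partition ratio} and Corollary \ref{cor:decay3} — it is precisely this comparison that forces the $\log n$ separation scale.

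Second, your maximal inequality step does not close. You only establish (and in fact only claim) that the increments $\Delta_k$ are mean-zero by a global reflection symmetry and have exponentially decaying pair correlations. The latter does not follow from Section \ref{sect:cyclegrowth}: the decay of correlations proved there is not uniform in the size of the sets involved, and $\Delta_j$ is a functional of an entire random slab. More importantly, mean-zero plus summable covariances does not give a Doob-type maximal inequality with a universal constant; Rademacher--Menshov-type bounds for merely orthogonal or weakly dependent increments cost an extra $(\log \tau)^2$, which with $\tau$ of order $n/\log n$ destroys the $\sqrt{n\log n}$ bound. The paper avoids this by building the transverse reflection symmetry into the \emph{definition} of a regeneration set (condition (R3)), which makes $(\hat X_i)$ an honest martingale with respect to the regeneration filtration, so that Doob's $L^2$ inequality applies directly in \eqref{doobs argument}. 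Without some device of this kind your assembly step is not valid as written.
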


Thus for large $\alpha$, the self-avoiding walk embedded into 
$\pi$ converges to a straight horizontal line, and the vertical aberration 
can be proved to be just a bit larger than $\sqrt{n}$. Indeed, we expect that 
the true vertical aberration is exactly of the order $\sqrt{n}$ and that 
$\gamma$ converges to a Brownian motion under diffusive scaling. This is 
known in the case of a self-avoiding walk without a background of spatial 
random permutations \cite{Kovchegov}. In our situation, the strong correlations
prevent us from getting a presumably sharp upper bound on the fluctuations, and 
indeed also prevent a useful lower bound. We will comment on the places where 
we lose the necessary accuracy for diffusive behaviour at the end of the proof 
of Theorem \ref{theo2}.

\rhead{ \large{ \textit{3 \hspace{0.7cm} CYCLE LENGTH AND PARTITION FUNCTION}}}
\section{Cycle length and partition function}
\label{sect:cyclelength}
In this section we prove Theorem \ref{theo1} and provide
further estimates comparing partition functions for different subsets
of $V$. 
Our Theorem \ref{theo1}  treats only the case of vertex transitive graphs since 
we want quantitative estimates involving the connective constant; but it is 
not difficult to modify our proof so that one can treat general graphs, 
including those with edge weights. In the latter case, some modifications 
will be necessary, as the graph distance is no longer a good quantity 
to measure the distance between sets. We do not pursue this any further in 
the present paper. 

Recall that for a self-avoiding path  or cycle $\gamma$, $\|\gamma  \|$ denotes the number of its edges and that
$|\gamma|$ denotes the number of its vertices.
Our first comparison is 
\begin{proposition}
\label{prop:estimategamma}
For any finite simple graph $G=(V,E)$,
for any self-avoiding path or cycle $\gamma \subset G$,
we have that
 \begin{equation}
 \label{eq:lowerboundZ}
\frac{Z(V \setminus \gamma)}{Z(V)} \leq { \left( \, \, \frac{1}{1 + e^{- 2 \alpha}} \, \, \right) }^{ \frac{ \|\gamma \|}{2}}.
\end{equation}
\end{proposition}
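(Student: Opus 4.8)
The plan is to bound $Z(V)$ from below by $Z(V\setminus\gamma)$ times a purely combinatorial factor coming from permutations supported on the vertices of $\gamma$. Concretely: given any $\pi' \in \caS_{V\setminus\gamma}$ and any permutation $\sigma$ of the vertex set of $\gamma$ whose only non-trivial cycles are transpositions $(x\,y)$ with $\{x,y\}$ an edge of $\gamma$, the disjoint union $\pi := \pi'\sqcup\sigma$ (acting as $\pi'$ on $V\setminus\gamma$ and as $\sigma$ on $\gamma$) lies in $\caS_V$: the vertex sets are disjoint, and $\sigma$ only uses edges inside $\gamma\subseteq G$. The map $(\pi',\sigma)\mapsto\pi$ is injective, since restricting $\pi$ to the two vertex sets recovers $\pi'$ and $\sigma$, and $\caH_V(\pi) = \caH_{V\setminus\gamma}(\pi') + \caH_\gamma(\sigma)$, where $\caH_\gamma(\sigma)$ is the number of non-fixed points of $\sigma$. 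Summing $e^{-\alpha\caH_V(\cdot)}$ over this sub-family of $\caS_V$ gives
\[
Z(V) \;\geq\; Z(V\setminus\gamma)\cdot W, \qquad W := \sum_{\sigma} e^{-\alpha\caH_\gamma(\sigma)},
\]
the sum being over all admissible $\sigma$. It then remains to show $W \geq (1+e^{-2\alpha})^{\|\gamma\|/2}$, after which rearranging yields \eqref{eq:lowerboundZ}.

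The admissible $\sigma$ are exactly indexed by matchings $\caM$ in the graph $\gamma$: a matching of size $k$ corresponds to the product of the $k$ transpositions on its edges and has $2k$ non-fixed points, so $W = \sum_{\caM} e^{-2\alpha|\caM|}$, the matching-generating polynomial of $\gamma$ evaluated at $t := e^{-2\alpha}$. If $\gamma$ is a self-avoiding path with $m=\|\gamma\|$ edges $e_1,\dots,e_m$ in consecutive order, the odd-indexed edges $e_1,e_3,e_5,\dots$ form a matching of size $\lceil m/2\rceil \geq m/2$; restricting $W$ to sub-matchings of this single matching already gives $W \geq \sum_{S} t^{|S|} = (1+t)^{\lceil m/2\rceil} \geq (1+e^{-2\alpha})^{\|\gamma\|/2}$. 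If $\gamma$ is a cycle with an \emph{even} number $m$ of edges (the only case occurring when $G$ is bipartite, e.g.\ $\bbZ^d$), the same alternating choice is a perfect matching of size $m/2$ and the identical computation applies.

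The one delicate point is a cycle $\gamma$ with an \emph{odd} number $m\geq 3$ of edges $e_1,\dots,e_m$, where no matching reaches size $m/2$. Here I would split the admissible $\sigma$ into those whose matching avoids the wrap-around edge $e_m$ — sub-matchings of the $(m-1)/2$ pairwise non-adjacent edges $\{e_1,e_3,\dots,e_{m-2}\}$, contributing $(1+t)^{(m-1)/2}$ — and those containing $e_m$ together with a sub-matching of $\{e_2,e_4,\dots,e_{m-3}\}$ (which is compatible with $e_m$), contributing $t(1+t)^{(m-3)/2}$. These two families are disjoint, so
\[
W \;\geq\; (1+t)^{(m-1)/2} + t\,(1+t)^{(m-3)/2} \;=\; (1+t)^{(m-3)/2}\bigl(1+2t\bigr),
\]
and the elementary bound $1+2t \geq (1+t)^{3/2}$ for $t = e^{-2\alpha}\le 1$ (square both sides: $t+t^2-t^3 = t(1+t-t^2)\geq 0$) gives $W \geq (1+e^{-2\alpha})^{m/2}$, completing the proof. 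The argument uses no structure of $G$ beyond finiteness; the only real bookkeeping is this odd-cycle case, everything else being a direct counting argument.
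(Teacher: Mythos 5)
Your proof is correct and is, in its core mechanism, the same as the paper's: both lower bound $Z(V)$ by $Z(V\setminus\gamma)$ times a sum over permutations supported on the vertex set of $\gamma$ whose non-trivial cycles are transpositions across edges of $\gamma$, i.e.\ a matching generating polynomial, and then extract a single matching of size roughly $\|\gamma\|/2$. (The paper phrases this via $Z(V)\geq Z(\gamma)Z(V\setminus\gamma)$ from Proposition \ref{prop:Markovspatial}~(i), followed by $Z(\gamma)\geq Z(\hat\gamma)=(1+e^{-2\alpha})^{M/2}$ where $\hat\gamma$ is the disjoint union of consecutive pairs $(x^0,x^1),(x^2,x^3),\dots$; your direct injection $(\pi',\sigma)\mapsto\pi$ is the same estimate unpacked.)

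Where you genuinely go further is the odd cycle case. The paper defines $M$ as the largest even number $\leq|\gamma|$, obtains $Z(\gamma)\geq(1+e^{-2\alpha})^{M/2}$, and then asserts ``if $\gamma$ is a cycle, then $|\gamma|=\|\gamma\|$ is even and we can set $M=\|\gamma\|$.'' That assertion holds on bipartite graphs such as $\bbZ^d$ and the cylinders used later, but the proposition is stated for an arbitrary finite simple graph $G$, where odd cycles exist; there the paper's single-matching argument only gives exponent $(\|\gamma\|-1)/2$. Your two-family decomposition of matchings (those avoiding the wrap-around edge $e_m$, and those containing it), together with the elementary inequality $1+2t\geq(1+t)^{3/2}$ for $t\in[0,1]$, recovers the full exponent $\|\gamma\|/2$ also for odd cycles. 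So your argument is not merely an alternative presentation: it closes a small gap and makes the proposition true at the stated generality, at the modest cost of one extra paragraph of bookkeeping. For the paper's downstream use (Theorem \ref{theo1} on vertex-transitive graphs, later on $\bbZ^d$) the distinction is immaterial up to a bounded constant, but your version is the one that matches the proposition as worded.
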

\begin{proof}
Let $\gamma \subset V$ be a
self-avoiding path
or a cycle.
Let us denote by 
$\{ x^0, x^1, \ldots, x^{|\gamma|-1} \}$ 
the sequence of sites of $\gamma$, ordered 
such that $(x^i, x^{i+1})$ is an edge of $\gamma$.
We let $M$ be the largest even number such that $M \leq |\gamma |$.
We claim that 
\begin{equation}
\label{eq:estimationsU}
{Z} (\gamma) \geq (  1 + e^{- 2 \alpha} )^{ \frac{M}{2}},
\end{equation}
where we recall our abuse of notation: $\gamma$ denotes the  
sites occupied by the cycle $\gamma$, and $Z(\gamma)$ is the partition function of the
subgraph generated by those sites. To see \eqref{eq:estimationsU},
we partition $\gamma$
into pairs $(x^0, x^1)$, $(x^2, x^3)$, $\ldots$, $(x^{M-2}, x^{M-1})$.
Let $\hat \gamma$ be the graph obtained from $\gamma$ by keeping
only edges connecting vertices of the pair $(x^i, x^{i+1})$,
for even $i \in [0, M-1]$,  and removing all the other edges.
Clearly,
$ {Z} (\gamma) \geq {Z} (\hat \gamma)$,
since $\hat \gamma$ is a subgraph of $\gamma$.
Since the graph $\hat \gamma$ is composed of $\frac{M}{2}$ disjoint
subgraphs  containing two vertices connected by an edge each
and since the contribution of each of these subgraphs
is $(1 + e^{- 2 \alpha} )$, (\ref{eq:estimationsU}) follows.

Now note that, by Proposition \ref{prop:Markovspatial} (i), 
\begin{equation}
\label{eq:inequalitypartition1}
Z (V) 
\geq 
Z (A) \,  Z(V \setminus A).
\end{equation}
Note also that if $\gamma$ is a cycle, then 
$|\gamma | = \|  \gamma \|$ is even and we can set $M = \| \gamma \|$,
while if  $\gamma$ is a self-avoiding path, then $M \geq |\gamma| - 1 = \| \gamma \| $. 
Thus, we have that  if $\gamma$ is a self-avoiding path or a cycle, then
$$
\frac{Z(V \setminus \gamma) }{Z(V)} \leq \frac{1}{Z(\gamma)} \leq  
{ \left( \, \, \frac{1}{1 + e^{- 2 \alpha}} \, \, \right) }^{ \frac{\| \gamma \|}{2}}.
$$
\end{proof}

The estimate of Proposition \ref{prop:estimategamma} is 
logarithmically sharp if the cycle $\gamma$ is ``stretched out'' in the sense that 
the subgraph generated by the sites of $\gamma$ contains no 
further edges beyond those of $\gamma$. If $\gamma$ is ``curly'', 
meaning that many points in the relevant subgraph 
are connected by more than two edges, one could use these edges 
in order to get better lower bounds on $Z(A)$ and thus better upper 
bounds on the ratio $Z(V \setminus \gamma) / Z(V)$. The associated 
combinatorics do not look easy even in the case of 
$V \subset \bbZ^2$, though.

We are now ready to prove our first main theorem. 
\begin{proof}[\textbf{Proof of Theorem \ref{theo1}}] 
Let $G$ be an infinite vertex-transitive simple graph
of bounded degree, let $U$ be a finite subset of $V$. 
For $\pi \in \mathcal{S}_{U}$, $x \in U$, and a cycle
$\tilde \gamma \subset U$,  we have 
\begin{equation}
\label{eq:probabilitycycle}
\mathbb{P}_{U}( {\gamma}_{x} = \tilde{\gamma}  ) =    
e^{ - \alpha   \| \tilde{\gamma} \|   } \,   \frac{ Z(U \setminus 
\tilde{\gamma})}{Z (U)},
\end{equation}
Let $\mu_{G}$ be the cyclic connective constant of $G$.
The definition of cyclic connective constant (\ref{eq:cycleconnective})
implies that for every $\delta>0$ there exists $C_\delta>0$ such that
for any $n \in \mathbb{N}$,
$$
|SAP_n| \leq C_\delta \, ( \mu_{G} + \delta)^n.
$$
By vertex transitivity, the same bound holds for $SAP_n(x)$, the set of self-avoiding cycles of length $n$ starting 
in $x$. By Proposition \ref{prop:estimategamma}, we then have that
 for all $\ell \in \mathbb{N}^+$,
\begin{align}
\mathbb{P}_{U}( \| \gamma_{x} \|  \geq \ell  )
 &  = \sum\limits_{n=\ell }^{\infty} \sum\limits_{\tilde{\gamma} \in SAP_n(x) \cap U } \exp \big ( -\alpha \, n \, \big )
 \frac{ Z(U \setminus \tilde{\gamma} )}{Z (U)}  \\
 \label{eq:twosums2}
 & \leq \, C_\delta \,  \, \sum\limits_{n = \ell }^{\infty}  \exp \bigg ( \, -n \, \Big( \alpha + \frac{1}{2} 
 \log \big( \, 1 + e^{- 2 \alpha} \, \big) - \log \big(  \mu_{G} + \delta \big) \, \, \Big ) \, \bigg)
\end{align}
Recall that $\alpha_0$ is defined as the unique solution of (\ref{eq: def of alpha c})
and that therefore it satisfies  $ 0 \, < \, \alpha_0 \, < \, \log(\mu_{G})$.
For each $\alpha > \alpha_0$, we can find $\delta(\alpha, G)>0$ small enough such that
\begin{equation}
\label{eq:defc2}
c_0(\alpha) := \alpha + \frac{1}{2} \log \big(  1 + e^{-2 \alpha} \big) - \log \big( \, \mu_{G} + \delta(\alpha, G) \, \big) > 0.
\end{equation}
Thus 
\[
\mathbb{P}_{U}( \| \gamma_{x} \|  \geq \ell  ) \leq 
 \frac{C_{\delta(\alpha,G)}}{1 - \e{-c_0(\alpha)}} \e{- \ell \, c_0(\alpha)}.
\]
It is not difficult to see that for all large enough $\alpha$, we can choose 
$\delta(\alpha, G) = 1$, and that then 
$\lim_{\alpha \to \infty} c_0(\alpha)/\alpha = 1.$ This concludes the proof 
of the theorem. 
\end{proof}

\begin{remark}
\label{remark:loopO(n)}
Proposition \ref{prop:estimategamma}
is the only point of this paper
where the existence of cycles of length $2$ is necessary.
For the loop $O(2)$ model \cite{Copin2}, corresponding to random permutations
without cycles of length $2$, the exponential bound
(\ref{eq:theo1}) of Theorem \ref{theo1}  would still hold true, but only for $\alpha > \log \mu$.
Exponential decay of cycle length for the loop $O(n)$ model has  been proved in
the paper \cite{Copin2} for any value of $\alpha$ when $n$ is large.
\end{remark}

The next proposition uses similar ideas as in the previous proof in order to 
give a complementary bound
to the one of Proposition \ref{prop:estimategamma}. 
\begin{proposition}
\label{prop:lowerboundpart}
Let $G$ be an infinite vertex-transitive simple graph,
let $\alpha_0$ be the unique solution of equation (\ref{eq: def of alpha c}).
For any $\alpha > \alpha_0$ there exists
$c_1(\alpha)>0$ such that for any finite $U \subset V$, and for all $A \subset  U$,
\begin{equation}
\frac{Z(U \setminus A)}{Z(U)}  \geq \hypertarget{c_1}{c_1(\alpha)}^{|A|}.
\end{equation}
Moreover $c_1(\alpha)$ can be chosen such that $\lim\limits_{\alpha \to \infty} 
c_1(\alpha) = 1$.
\end{proposition}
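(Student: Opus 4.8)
The plan is to reduce the statement to a uniform lower bound on the probability that a single vertex is a fixed point, and then iterate. Enumerate $A=\{v_1,\dots,v_k\}$ with the $v_i$ distinct, set $U_0=U$ and $U_i=U_{i-1}\setminus\{v_i\}$. Telescoping, together with the elementary identity $Z(W\setminus\{v\})/Z(W)=\mathbb{P}_W(\pi(v)=v)$ (this is the $\tilde\gamma=\{v\}$ case of \eqref{eq:probabilitycycle}, and also follows directly since the permutations of $W$ fixing $v$ are in energy-preserving bijection with $\mathcal{S}_{W\setminus\{v\}}$), gives
\[
\frac{Z(U\setminus A)}{Z(U)} \;=\; \prod_{i=1}^{k}\frac{Z(U_i)}{Z(U_{i-1})} \;=\; \prod_{i=1}^{k}\mathbb{P}_{U_{i-1}}\!\big(\pi(v_i)=v_i\big).
\]
Hence it suffices to produce, for each $\alpha>\alpha_0$, a constant $c_1(\alpha)>0$ with $\mathbb{P}_W(\pi(v)=v)\ge c_1(\alpha)$ for every finite $W\subset V$ and $v\in W$, and then to check that $c_1(\alpha)$ may be chosen with $c_1(\alpha)\to1$ as $\alpha\to\infty$.

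To bound $p:=\mathbb{P}_W(\pi(v)=v)$ from below I would control its complement by summing over the cycle through $v$, exactly as in the proof of Theorem~\ref{theo1}. By \eqref{eq:probabilitycycle},
\[
1-p \;=\; \sum_{m\ge 2}\ \sum_{\substack{\tilde\gamma\subset W\ \text{cycle}\\ v\in\tilde\gamma,\ \|\tilde\gamma\|=m}} e^{-\alpha m}\,\frac{Z(W\setminus\tilde\gamma)}{Z(W)}.
\]
Now I estimate the ratio two ways: since $v\in\tilde\gamma$, monotonicity of $Z$ under passing to subgraphs (extend permutations by the identity; equivalently a special case of Proposition~\ref{prop:Markovspatial}~(i)) gives $Z(W\setminus\tilde\gamma)\le Z(W\setminus\{v\})$, hence $Z(W\setminus\tilde\gamma)/Z(W)\le p$; and Proposition~\ref{prop:estimategamma} gives $Z(W\setminus\tilde\gamma)/Z(W)\le(1+e^{-2\alpha})^{-m/2}$. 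Interpolating via $\min(a,b)\le a^{1-\epsilon}b^{\epsilon}$ for $\epsilon\in[0,1]$ yields $Z(W\setminus\tilde\gamma)/Z(W)\le p^{1-\epsilon}(1+e^{-2\alpha})^{-\epsilon m/2}$. Using vertex transitivity and $|SAP_m|\le C_\delta(\mu_G+\delta)^m$ to count the length-$m$ cycles through $v$,
\[
1-p \;\le\; p^{1-\epsilon}\,C_\delta\sum_{m\ge 2}\Big[(\mu_G+\delta)\,e^{-\alpha}\,(1+e^{-2\alpha})^{-\epsilon/2}\Big]^{m} \;=:\; p^{1-\epsilon}\,K.
\]

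Since $\alpha_0$ is the unique solution of \eqref{eq: def of alpha c} and $\alpha\mapsto\alpha+\tfrac12\log(1+e^{-2\alpha})$ is strictly increasing, for $\alpha>\alpha_0$ one has $\alpha+\tfrac12\log(1+e^{-2\alpha})>\log\mu_G$ strictly, so we may fix $\epsilon=\epsilon(\alpha)\in(0,1)$ close to $1$ and $\delta=\delta(\alpha)>0$ small enough that $\alpha+\tfrac{\epsilon}{2}\log(1+e^{-2\alpha})>\log(\mu_G+\delta)$; then the geometric series converges and $K=K(\alpha)<\infty$. Since $p\le1$ gives $p\le p^{1-\epsilon}$, the bound $1-p\le Kp^{1-\epsilon}$ implies $1\le(1+K)p^{1-\epsilon}$, i.e.\ $p\ge(1+K)^{-1/(1-\epsilon)}=:c_1(\alpha)>0$. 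For the limit $\alpha\to\infty$, once $\alpha>\log\mu_G$ one may take $\epsilon=0$, so that $K=C_\delta\sum_{m\ge2}\big((\mu_G+\delta)e^{-\alpha}\big)^m\to0$ and $c_1(\alpha)=(1+K)^{-1}\to1$.

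The only genuinely delicate point is reaching the full range $\alpha>\alpha_0$ rather than merely $\alpha>\log\mu_G$: the crude estimate (the case $\epsilon=0$) makes the cycle sum converge only for $\alpha>\log\mu_G$, and it is the interpolation against the Proposition~\ref{prop:estimategamma} bound with $\epsilon$ tuned just below $1$ — exploiting the strict inequality defining $\alpha_0$ — that lowers the threshold down to $\alpha_0$, at the harmless price of a constant $c_1(\alpha)$ that may be very small as $\alpha\downarrow\alpha_0$. Everything else is a routine repetition of the estimates already used for Theorem~\ref{theo1}.
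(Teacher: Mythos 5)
Your proof is correct, and it shares the paper's overall architecture: telescoping $Z(U\setminus A)/Z(U)$ into single-vertex ratios, and controlling each ratio by expanding the partition function over the cycle through the removed vertex and using Proposition~\ref{prop:estimategamma} to tame the weights $Z(W\setminus\tilde\gamma)/Z(\cdot)$. Where you diverge from the paper is in how the threshold is pushed all the way down to $\alpha_0$. You normalise by $Z(W)$, obtaining $1-p=\sum_m\sum_{\tilde\gamma}e^{-\alpha m}Z(W\setminus\tilde\gamma)/Z(W)$, and then interpolate the two available bounds $Z(W\setminus\tilde\gamma)/Z(W)\le p$ and $Z(W\setminus\tilde\gamma)/Z(W)\le(1+e^{-2\alpha})^{-m/2}$ via $\min(a,b)\le a^{1-\epsilon}b^{\epsilon}$, tuning $\epsilon$ close to $1$ to make the series converge for $\alpha>\alpha_0$. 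The paper instead normalises by $Z(U\setminus\{x\})$, writes
\[
\frac{Z(U)}{Z(U\setminus\{x\})}=1+\sum_{n\ge 2}\sum_{\tilde\gamma\in SAP_n(x)\cap U}e^{-\alpha n}\,\frac{Z(U\setminus\tilde\gamma)}{Z(U\setminus\{x\})},
\]
and applies Proposition~\ref{prop:estimategamma} to the self-avoiding \emph{path} $\tilde\gamma\setminus\{x\}$ (of length $n-2$) inside the graph $U\setminus\{x\}$, which produces the full factor $(1+e^{-2\alpha})^{-(n-2)/2}$ and hence exactly the geometric ratio $e^{-\alpha}(1+e^{-2\alpha})^{-1/2}(\mu_G+\delta)$, convergent for all $\alpha>\alpha_0$ without any interpolation. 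Both arguments are valid and cover the same range of $\alpha$; the paper's is a bit cleaner and yields the explicit constant \eqref{eq:c1}, while your interpolated constant $(1+K)^{-1/(1-\epsilon)}$ degenerates badly as $\alpha\downarrow\alpha_0$ (since $\epsilon\to1$), which is harmless for the statement but worth being aware of. Your treatment of the $\alpha\to\infty$ limit (drop to $\epsilon=0$) is correct and matches the paper's observation that $c_1(\alpha)\to1$.
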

\begin{proof}
Fix $x \in U$. 
Using the notation as in the proof of Proposition \ref{prop:estimategamma},
we get
\[
Z(U) =  
\sum\limits_{n=0}^{\infty}  \, \, \, 
\sum\limits_{\substack{ \tilde{\gamma} \subset SAP_n(x) \, \cap \,  U } }  \, \, \,
\sum\limits_{\substack{ \pi \in \mathcal{S}_{U} : \\  \gamma_x(\pi) = \tilde{\gamma }  }} \, \, \, 
e^{- \alpha \mathcal{H}_{U}(\pi)}  =
 Z(U \setminus \{x \})    \, \, 
\sum\limits_{n=0}^{\infty}  \sum\limits_{\substack{ \tilde{\gamma} \subset SAP_n(x) \, \cap \,  U}  }    e^{- \alpha \| \tilde \gamma \|}
 \frac{Z(U \setminus  \tilde{\gamma}  ) }{{Z(U \setminus \{x\})}}
\]
With $\tilde \gamma \in SAP_n(x) \cap U$ for $n \geq 2$, 
we apply Proposition \ref{prop:estimategamma} 
to the self-avoiding path $\tilde \gamma \setminus \{x\}$ 
(of length $n-2$), and the graph $U \setminus \{x\}$, 
giving 
\[
\frac{Z(U \setminus \tilde \gamma)}{Z(U \setminus \{x\})}\leq 
 (1 + \e{-2\alpha})^{-(n-2)/2} \quad \text{ for all }  
\tilde \gamma \in SAP_n(x),  \, \, n \geq 2.
\] 
Thus,
\[ 
\frac{Z(U)}{Z(U\setminus\{x\})} \leq 1 + (1 + \e{-2\alpha}) 
\sum_{n=2}^\infty |SAP_n| \e{- n \alpha} (1 + \e{-2 \alpha})^{-n/2}
\leq 1 + \frac{(1 + \e{-2\alpha}) C_{\delta(\alpha,G)}}{1 - \e{-c_0(\alpha)}}
\e{- 2 c_0(\alpha)}.
\]
Above, the constants are the same as in Theorem \ref{theo1}. 
By our knowledge of $C_{\delta(\alpha,G)}$ and $c_0(\alpha)$, clearly 
the right hand side of the above equation converges to one as 
$\alpha \to \infty$. Thus we have shown the claim for $A = \{x\}$, 
with 
\begin{equation}
\label{eq:c1}	
c_1(\alpha) = \Big( 1 + \frac{(1 + \e{-2\alpha}) C_{\delta(\alpha,G)}}{1 - \e{-c_0(\alpha)}}
\e{- 2 c_0(\alpha)} \Big)^{-1}.
\end{equation}
For general $A$, the claim follows from the telescopic product 
\[
\frac{Z \left(U \setminus A \right)}{Z(U)} = 
\frac{Z(U \setminus A)}{Z\big(   U \setminus ( A \setminus \{x^1\}) \big) }  
 \frac{Z(\big(  U \setminus ( A \setminus \{x^1\}) \big)  }
{Z \big(  U \setminus ( A \setminus \{x^1, x^2\}) \big)  }  \ldots  
\frac{Z( U \setminus \{x^{|A|} \} )}{Z(U)}
 \geq 
 \hyperlink{c_1}{c_1(\alpha)}^{|A|}.
\]
\end{proof}


\rhead{ \large{ \textit{4 \hspace{0.7cm} ITERATIVE SAMPLING}}}
\section{Iterative sampling and the spatial Markov property}
\label{sect:cyclegrowth}

Spatial random permutations have a fundamental spatial Markov property, which 
we will first discuss in a relatively simple form. Then we introduce a sort of strong Markov property, 
and finally we present 
iterative sampling, which is a basic new technique that enables many of our proofs. 

Let $G = (V,E)$ be a finite graph. For $B \subset V$, we define the sigma algebra 
$$
\mathcal{F}_B = \sigma\left(  \{ \pi \,  :  \, \pi(x)=y, \, \pi^{-1}(x)=z \} \,  :  \, x \in B, \, y,z \in V   \right).
$$
Put differently, the value $f(\pi)$ of an $\mathcal F_B$-measurable function is determined by the set of values  $\pi(x)$ and $\pi^{-1}(x)$, $x \in B$. For $\pi \in S_V$, we let 
\[
{\rm Inv}(\pi) := \{A \subset V: \pi(A) = A\}
\]
be the family of $\pi$-invariant sets. The  Markov property, statement $(iii)$ in 
the theorem below, says that conditional on $A \in {\rm Inv}$, an $\caF_A$ measurable 
random variable and an $\caF_{A^c}$-measurable random variable are independent. In order 
to state it correctly, let us recall that $\bbP_A$ denotes the probability measure 
\eqref{eq:measureG} on the subgraph generated by $A$, and write $\bbE_A$ for the corresponding expectation. For $\pi \in S_A$ we write 
$\pi \oplus {\rm id}$ for the permutation in $S_V$ that is obtained by setting 
$\pi(x)=x$ for all $x \notin A$. 

\begin{proposition}[\textbf{Spatial Markov property}]
\label{prop:Markovspatial}
Let $A \subset V$, $A \neq \emptyset$. Then, \\[3mm]
(i): $\bbP_V(A \in {\rm Inv}) = Z(A) Z(A^c) / Z(V)$.\\[2mm]
(ii): For $\caF_A$-measurable $f$, we have $\bbE_V(f | A \in {\rm Inv}) = \bbE_A(f(. \oplus {\rm 
id}))$.\\[2mm]
(iii): For $\caF_A$-measurable $f$ and $\caF_{A^c}$-measurable $g$, we have 
\[
\bbE_V(f \, g | A \in {\rm Inv}) = \bbE_V( f | A \in {\rm Inv}) \, \bbE_V( g | A \in {\rm 
Inv}) = \bbE_A(f(. \oplus {\rm id})) \,  \bbE_{A^c}(g({. \oplus  \rm id} )).
\]
(iv): For $\caF_A$-measurable $f$ and $\mathcal{B} \in \caF_{A^c}$, we have 
\begin{equation}
\label{eq:spatialMarkov2}
\bbE_V(f | A \in {\rm Inv}, \mathcal{B}) = \bbE_A(f).
\end{equation}
\end{proposition}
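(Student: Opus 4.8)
The entire proposition rests on one combinatorial observation, after which all four parts follow by direct summation. The first step is to record that the map $\pi \mapsto (\pi|_A,\pi|_{A^c})$ is a \emph{weight-preserving bijection} from $\{\pi \in \mathcal{S}_V : A \in {\rm Inv}(\pi)\}$ onto $\mathcal{S}_A \times \mathcal{S}_{A^c}$. Indeed, if $\pi(A)=A$ then automatically $\pi(A^c)=A^c$, and for $x\in A$ with $\pi(x)\neq x$ the edge $\{x,\pi(x)\}$ has both endpoints in $A$, so $\pi|_A \in \mathcal{S}_A$ and likewise $\pi|_{A^c}\in\mathcal{S}_{A^c}$; conversely any pair $(\sigma,\tau)\in\mathcal{S}_A\times\mathcal{S}_{A^c}$ glues to a unique such $\pi$. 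Since the energy \eqref{eq:energy} is a sum of single-site indicators, it is additive under this decomposition: $\mathcal{H}_V(\pi) = \mathcal{H}_A(\sigma)+\mathcal{H}_{A^c}(\tau)$, hence $e^{-\alpha\mathcal{H}_V(\pi)} = e^{-\alpha\mathcal{H}_A(\sigma)}\,e^{-\alpha\mathcal{H}_{A^c}(\tau)}$.

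Given this, part (i) is immediate: summing $e^{-\alpha\mathcal{H}_V(\pi)}$ over $\{A\in{\rm Inv}\}$ factorizes into $Z(A)Z(A^c)$, and dividing by $Z(V)$ gives the formula (in particular $\bbP_V(A\in{\rm Inv})>0$, so the conditional expectations below make sense). For (ii), I would first note that an $\caF_A$-measurable $f$, \emph{on the event} $\{A\in{\rm Inv}\}$, is a function of $\sigma=\pi|_A$ alone — here one uses exactly that $A\in{\rm Inv}$ forces $\pi(x),\pi^{-1}(x)\in A$ for all $x\in A$ — and equals $f(\sigma\oplus{\rm id})$ there. Then
\[
\bbE_V(f\mid A\in{\rm Inv}) = \frac{1}{Z(A)Z(A^c)}\sum_{\sigma\in\mathcal{S}_A}\sum_{\tau\in\mathcal{S}_{A^c}} f(\sigma\oplus{\rm id})\,e^{-\alpha\mathcal{H}_A(\sigma)}\,e^{-\alpha\mathcal{H}_{A^c}(\tau)},
\]
and carrying out the $\tau$-sum (which yields $Z(A^c)$) leaves precisely $\bbE_A(f(\cdot\oplus{\rm id}))$. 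Part (iii) is the same computation with integrand $f(\pi)g(\pi)=f(\sigma\oplus{\rm id})\,g(\tau\oplus{\rm id})$, where the double sum now factorizes completely into $\bbE_A(f(\cdot\oplus{\rm id}))\cdot\bbE_{A^c}(g(\cdot\oplus{\rm id}))$; comparing with (ii) applied to $f$ and to $g$ gives the stated product form. Finally, (iv) follows from (iii) by taking $g=\mathbbm{1}_{\mathcal{B}}$: then $\bbE_V(f\,\mathbbm{1}_{\mathcal{B}}\mid A\in{\rm Inv}) = \bbE_V(f\mid A\in{\rm Inv})\,\bbP_V(\mathcal{B}\mid A\in{\rm Inv})$, and dividing by $\bbP_V(\mathcal{B}\mid A\in{\rm Inv})$ (positive whenever the conditioning event $\{A\in{\rm Inv}\}\cap\mathcal{B}$ is non-null; otherwise the statement is vacuous) collapses the conditioning on $\mathcal{B}$ and, using (ii), leaves $\bbE_A(f)$, identifying $f$ with its restriction to $\mathcal{S}_A$.

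There is no serious obstacle: the mathematical content is entirely in the factorization of the first paragraph, and the remainder is bookkeeping. The two points needing a little care are (a) checking that the restricted maps actually lie in $\mathcal{S}_A$ and $\mathcal{S}_{A^c}$, which relies on the convention that $E_A$ retains all edges of $E$ internal to $A$; and (b) the precise sense in which an $\caF_A$-measurable function — a priori determined by $\pi(x),\pi^{-1}(x)$ for $x\in A$ — reduces to a function of $\pi|_A$ once we condition on $\{A\in{\rm Inv}\}$. Both are routine but should be stated explicitly, since (b) is exactly where the hypothesis $A\in{\rm Inv}$ enters.
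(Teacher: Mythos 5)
Your proof is correct and follows essentially the same route as the paper: you make explicit the weight-preserving bijection $\pi \mapsto (\pi|_A,\pi|_{A^c})$ and the additivity $\mathcal{H}_V = \mathcal{H}_A + \mathcal{H}_{A^c}$, which is exactly the content of the paper's central identity (their equation for $\sum_{\pi: \pi(A)=A} f g\, e^{-\alpha\mathcal{H}_V}$), and then each of (i)–(iv) follows by the same specializations (set $f=g=1$; set $g=1$; keep both; take $g=\mathbbm{1}_{\mathcal{B}}$). The one point you flag as needing care — that $\caF_A$-measurability plus $A\in{\rm Inv}$ reduces $f$ to a function of $\pi|_A$ — is indeed the hypothesis at work, and your treatment of it is the right one.
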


\begin{proof}
	By the additivity of $\caH_V$, we have for $\caF_A$-measurable $f$ and 
	$\caF_{A^c}$-measurable $g$ that 
\begin{equation}
	\label{eq: main markov formula}
	\sum_{\pi \in S_V: \pi(A) = A} f(\pi) g(\pi)\e{- \alpha \mathcal{H}_V(\pi)} = \sum_{\pi \in S_A} f(\pi \oplus {\rm id}) \e{- \alpha \mathcal{H}_A(\pi)} \sum_{\tilde \pi \in S_{A^c}} g({\tilde \pi \oplus \rm id})
	\e{-\alpha \mathcal{H}_{A^c}(\tilde \pi)}.
\end{equation}
Inserting $f=1$ and $g=1$ into \eqref{eq: main markov formula} and dividing by $Z(V)$
gives (i). Setting $g=1$ in \eqref{eq: main markov formula} and dividing by 
$Z(V)$ and using (i) gives 
$\bbE_V(f \mathbbm{1}\{A \in {\rm Inv}\}) = \bbE_A(f) \bbP(A \in {\rm Inv})$, 
and thus (ii). 
For (iii), note that dividing \eqref{eq: main markov formula} by
$Z(A) Z(A^{c})$ gives $\bbE_V(f \, g | A \in {\rm Inv})$ on the left hand side by 
using (i), while on the right hand side it gives 
$\bbE_A(f(. \oplus {\rm id})) \,  \bbE_{A^c}(g(  . \oplus   {\rm id} ))$. The remaining 
equality follows from (ii). For (iv), it suffices to use the equality 
$ \bbE_V(f | A \in {\rm Inv}, \mathcal{B}) = \bbE_V(f \mathbbm{1}\{\mathcal{B}\} | A \in {\rm Inv}) / 
\bbP_V(B | A \in {\rm Inv})$, the fact that $A \in {\rm Inv}$ is equivalent to 
$A^c \in {\rm Inv}$, and (iii).
\end{proof}

Following ideas from the theory of Markov chains, we can also formulate a strong 
Markov property for random permutations. A set-valued random variable 
$Q: S_V \to \caP(V)$ will be called an {\em admissible random invariant set} if 
$Q(\pi) \in {\rm Inv}(\pi)$ for all $\pi \in S_V$, and if the event $\{Q=A\}$ is 
$\caF_A$-measurable for all $A \subset V$. Moreover, we define the sigma algebra
\[
\caF_Q = \{ R \in \caP(S_V): R \cap \{Q = A\} \in \caF_A 
\text{ for all } A \subset V\}.
\]
We then have 
\begin{proposition}[Strong Markov property]
\label{prop:strong Markov}
	Let $Q$ be an admissible random invariant set.
	For $B \subset V$ and $\caF_B$-measurable $f: S_V \to \bbR$ we have 
	\[
	\bbE_V(f | \caF_Q) \, \mathbbm{1}\{Q \cap B = \emptyset\} = \bbE_V(f \, \mathbbm{1}\{Q \cap B = \emptyset\} | \caF_Q) = \bbE_{Q^c} \big( f(. \oplus {\rm id} ) \big)
	\mathbbm{1} \{Q \cap B = \emptyset \}
	\]
\end{proposition}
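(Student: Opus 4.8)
The plan is to reduce the strong Markov property to the ordinary spatial Markov property (Proposition \ref{prop:Markovspatial}) by decomposing over the value of $Q$. First I would observe that, since $Q$ is an admissible random invariant set, the events $\{Q = A\}$ for $A \subseteq V$ form a countable measurable partition of $S_V$, and for each $A$ the event $\{Q=A\}$ is $\caF_A$-measurable. The key structural fact I want is that conditioning on $\caF_Q$ is, on the piece $\{Q = A\}$, the same as conditioning on $\caF_A$ together with the event $\{A \in \mathrm{Inv}\}$: indeed $\{Q=A\} \subseteq \{A \in \mathrm{Inv}\}$ because $Q(\pi) \in \mathrm{Inv}(\pi)$ always, and by definition of $\caF_Q$ a set $R \in \caF_Q$ intersected with $\{Q=A\}$ lies in $\caF_A$. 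So it suffices to prove the identity after multiplying by $\mathbbm 1\{Q = A\}$ and summing over $A$.

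The second step handles the indicator $\mathbbm 1\{Q \cap B = \emptyset\}$. On the event $\{Q = A\}$, this indicator equals $\mathbbm 1\{A \cap B = \emptyset\}$, which is a constant (either $0$ or $1$) and in particular $\caF_Q$-measurable; this immediately gives the first equality $\bbE_V(f \mid \caF_Q)\,\mathbbm 1\{Q\cap B = \emptyset\} = \bbE_V(f\,\mathbbm 1\{Q\cap B=\emptyset\}\mid \caF_Q)$ by pulling the $\caF_Q$-measurable factor inside. For the main equality, fix $A$ with $A \cap B = \emptyset$; then $B \subseteq A^c$, so $f$ is $\caF_{A^c}$-measurable. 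I want to show that on $\{Q = A\}$ the conditional expectation $\bbE_V(f \mid \caF_Q)$ equals $\bbE_{A^c}(f(\,\cdot \oplus \mathrm{id}))$. Using the partition, it is enough to verify that for every $R \in \caF_Q$,
\[
\bbE_V\big( f \, \mathbbm 1_R \, \mathbbm 1\{Q = A\}\big) = \bbE_{A^c}\big(f(\,\cdot\oplus \mathrm{id})\big)\;\bbP_V\big(R \cap \{Q=A\}\big).
\]
Now $R \cap \{Q = A\} \in \caF_A$, and $\{Q = A\}$ itself is $\caF_A$-measurable, while $f$ is $\caF_{A^c}$-measurable and $A \in \mathrm{Inv}$ on the whole event; so this is exactly an instance of the factorisation in Proposition \ref{prop:Markovspatial}(iii) (with the roles of $A$ and $A^c$ interchanged, which is legitimate since $A \in \mathrm{Inv} \iff A^c \in \mathrm{Inv}$), applied with the $\caF_A$-measurable function $\mathbbm 1_{R \cap \{Q=A\}}$ and the $\caF_{A^c}$-measurable function $f$. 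Summing over all $A$ with $A \cap B = \emptyset$ and noting that the contributions from $A$ with $A \cap B \neq \emptyset$ are killed by $\mathbbm 1\{Q \cap B = \emptyset\}$ yields the claimed formula.

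The only genuinely delicate point is the bookkeeping around $\caF_Q$: one must check carefully that $\bbE_V(\,\cdot \mid \caF_Q)$ is well-defined (the $\{Q=A\}$ with positive probability form a partition, so this is just conditioning on an at-most-countable partition refined by $\caF_A$ on each cell) and that the defining property of $\caF_Q$ really does make $R \cap \{Q=A\} \in \caF_A$ available in the step above. Everything else is a direct appeal to Proposition \ref{prop:Markovspatial}; there is no estimate and no combinatorics, so I do not expect any serious obstacle beyond getting these measurability statements stated cleanly.
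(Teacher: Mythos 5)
Your proposal is correct and follows essentially the same route as the paper: decompose over the events $\{Q=A\}$, observe that $\{Q=A\}\subseteq\{A\in\mathrm{Inv}\}$ and that $\caF_Q$-measurable sets restricted to $\{Q=A\}$ lie in $\caF_A$, apply Proposition \ref{prop:Markovspatial}(ii)--(iii) (with $A$ and $A^c$ swapped) on each piece, and sum. The paper verifies the conditional-expectation property by integrating against an arbitrary $\caF_Q$-measurable function $g$ rather than against indicators $\mathbbm{1}_R$ with $R\in\caF_Q$, but these are equivalent tests, so there is no substantive difference.
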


\begin{proof}
	Since $Q$ is admissible, it is $\caF_Q$-measurable, and so the right hand side 
	above is also $\caF_Q$-measurable. Moreover, for a $\caF_Q$-measurable random 
	variable $g$, we have 
	\[
	\begin{split}
		\bbE_V \Big( \, \bbE_{Q^c}( f(. \oplus {\rm id}) \big)\mathbbm{1}\{Q \cap B = \emptyset\} g \Big)& = 
		\sum_{A: A \cap B = \emptyset} \bbE_V\Big( \bbE_{A^c}(f(. \oplus {\rm id}))  
		\mathbbm{1}\{Q =A \} g \Big)= \\
		& = \sum_{A: A \cap B = \emptyset} \bbE_{A^c}\Big(f(. \oplus {\rm id})\Big) \, \, 
		\bbE_V\Big(\mathbbm{1}\{Q=A\} g \Big| A \in {\rm Inv}\Big) \, \, 
		\bbP(A \in {\rm Inv}) \\ 
		& = \sum_{A: A \cap B = \emptyset} \bbE_{A^c}\Big(f(. \oplus {\rm id})\Big) 
		\,\, 
		\bbE_A\Big(\mathbbm{1}\{Q =A\} g(  . \oplus {\rm id} )\Big) \bbP(A \in {\rm Inv}) \\
		& = \sum_{A: A \cap B = \emptyset} \bbE_V\Big(f g \mathbbm{1}\{Q=A\}\Big)
		= \bbE_V\Big(f g \mathbbm{1}\{Q \cap B = \emptyset\}\Big).
		\end{split}
	\]
	Above, the second equality is true since $Q=A$ implies $A \in {\rm Inv}$ and
	the third equality holds by Proposition \ref{prop:Markovspatial} (ii) and the fact that 
	$\mathbbm{1}\{Q =A\} g$ is $\caF_A$-measurable for each $\caF_Q$-measurable $g$. The fourth 
	equality is due to Proposition \ref{prop:Markovspatial} (iii), which is applicable since $f$ is 
	$\caF_B$-measurable and thus in particular $\caF_{A^c}$-measurable as $A \cap B = \emptyset$. 
	The claim is thus shown. 
\end{proof}

We demonstrate the usefulness of the strong Markov property by applying it to the 
question of decay of dependence on the boundary conditions of spatial permutations. 
Let $G = (V,E)$ be a (large) graph, think of $V = \bbZ^d \cap [-N,N]^d$ with the 
nearest neighbour edge structure. Let $B \subset U \subset V$, 
where we think of $U$ as being large and differing from $V$ only 
``near the boundary'', and of $B$ as being ``near the center'' of $V$. 
We are interested in the difference between $\bbE_{V}(f)$
and $\bbE_{U}(f)$ in cases where the graph distance between $U^c$ 
and $B$ becomes large and $f$ is $\mathcal{F}_B$-measurable.
In other words, we are interested in how much the precise 
shape of the graph at the boundary influences the expectation of 
local random variables. This is a classical question in any theory related to Gibbs 
measures. 
Our answer to this question is the crude, but useful estimate
that is provided in the next proposition.

Let $\tilde G$ be the disjoint union of two copies 
of $G$. Let $\tilde V = V_1 \cup V_2$ be its vertex set, where $V_1$ and $V_2$ are 
disjoint copies of $V$. 
Let $\phi:\tilde V \to \tilde V$ be the natural symmetry on $\tilde V$, i.e.\ the map such that 
each $x \in V_1$ is mapped to the vertex in $V_2$ that corresponds to $x$ 
when $V_1$ and $V_2$ are identified with $V$. 
For $\tilde {\pi} \in S_{\tilde {V}}$, let  
$Q_A(\tilde {\pi})$ be the minimal $\tilde {\pi}$-invariant set that 
contains $A$ and is 
{\em compatible with} $\phi$. The latter means that $\phi(Q_A)=Q_A$. 
See also Figure \ref{Fig:decayboundary}.
\begin{figure}
\centering
\includegraphics[scale=0.25]{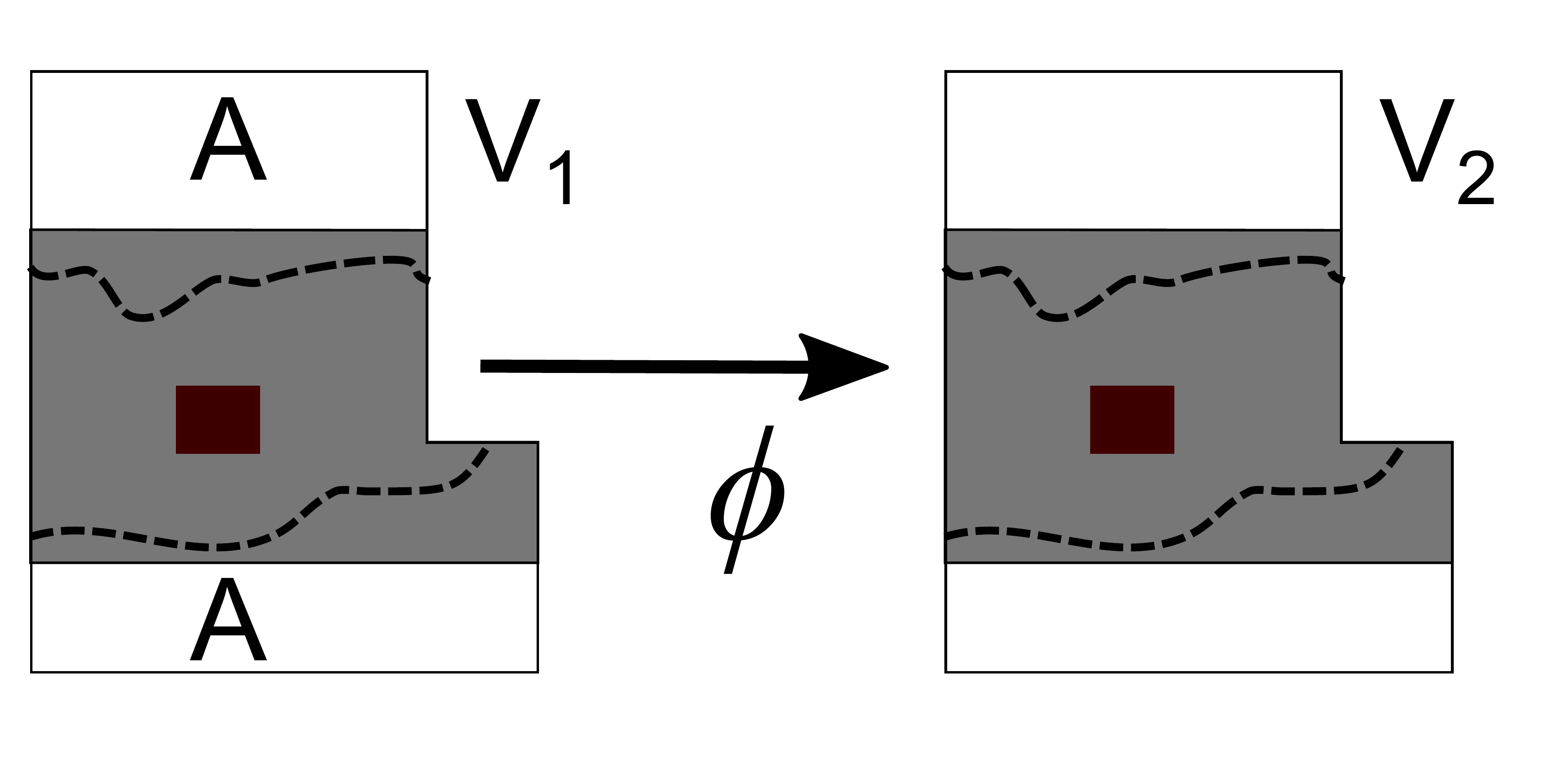}
\caption{The black set in $V_1$ corresponds to $B$, the dark set in $V_1$ corresponds to $U_1$,
the dark dotted line in $V_1$ and $V_2$ represents the  boundary of the (random) minimal $\phi$-compatible invariant set containing $A \subset V^1$.}
\label{Fig:decayboundary}
\end{figure}
Since the intersection of two $\tilde  \pi$-invariant, $\phi$-compatible sets 
containing $A$ retains these properties, and since $\tilde V$ has them, 
it is clear that such a minimal set exists.
\begin{proposition}
Under the assumptions made above,
\begin{equation}
	\label{eq:boundary condition decay}
	|\bbE_{U}(f) - \bbE_{V}(f)| \leq 2 \|f\|_{\infty} 
	\bbP_{\tilde V}\big(Q_A \cap B \neq \emptyset  \, \Big | \,  \tilde {\pi}|_A = {\rm id}\big).  
\end{equation}
\end{proposition}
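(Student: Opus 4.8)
The plan is to build, on the doubled graph $\tilde G$, a coupling of a $\bbP_U$-distributed and a $\bbP_V$-distributed permutation that agree on $\caF_B$ off the event $\{Q_A\cap B\neq\emptyset\}$; then \eqref{eq:boundary condition decay} is immediate, with $A$ the copy of $U^c=V\setminus U$ inside $V_1$, because on the complementary event $|f(\hat\pi)-f(\pi_1)|\le 2\|f\|_\infty$ and otherwise it is $0$.

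First I would sample $\tilde\pi=\pi_1\oplus\pi_2$ from $\bbP_{\tilde V}(\cdot\mid\tilde\pi|_A={\rm id})$. Since $\tilde G$ is a disjoint union, $\bbP_{\tilde V}$ factorises, and under this conditioning $\pi_1$ is a $\bbP_U$-distributed permutation of $U_1$ (extended by the identity on $A$) while $\pi_2$ is $\bbP_V$-distributed on $V_2$, independently; as $f$ is $\caF_B$-measurable with $B\subset U$, we get $\bbE_U(f)=\bbE[f(\pi_1)]$. I would then define a second permutation $\hat\pi$ of $V_1$ by importing $\pi_2$ inside the random invariant surface: $\hat\pi:=\pi_1$ on $V_1\setminus Q_A$ and $\hat\pi:=\phi\,\pi_2\,\phi$ on $Q_A\cap V_1$. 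This is well defined because $Q_A$ is $\tilde\pi$-invariant and $\phi(Q_A)=Q_A$, so both $Q_A\cap V_1$ and $V_1\setminus Q_A$ are invariant under the respective pieces and their union is a permutation. On $\{Q_A\cap B=\emptyset\}$ one has $B\subset V_1\setminus Q_A$, and since $V_1\setminus Q_A$ is invariant for both $\pi_1$ and $\hat\pi$, the values $\hat\pi(x),\hat\pi^{-1}(x)$ agree with $\pi_1(x),\pi_1^{-1}(x)$ for all $x\in B$, whence $f(\hat\pi)=f(\pi_1)$ there. So the whole argument reduces to showing $\hat\pi\sim\bbP_V$, which gives $\bbE[f(\hat\pi)]=\bbE_V(f)$ and hence the bound.

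Proving $\hat\pi\sim\bbP_V$ is the crux, and the place where the $\phi$-symmetric doubling does its work. I would condition on $\caF_{Q_A}$ (the $\sigma$-algebra generated by $Q_A$ and $\tilde\pi|_{Q_A}$), having first checked that $Q_A$ is an admissible random invariant set for $\tilde\pi$: it lies in ${\rm Inv}(\tilde\pi)$ by construction, and $\{Q_A=S\}$ is $\caF_S$-measurable because both $\tilde\pi$-invariance of $S$ and minimality of $S$ among $\phi$-compatible supersets of $A$ are testable from $\tilde\pi|_S$. By the strong Markov property (Proposition \ref{prop:strong Markov}), which remains valid under the extra conditioning $\{\tilde\pi|_A={\rm id}\}$ since $A\subset Q_A$, conditionally on $\caF_{Q_A}$ the restriction $\pi_1|_{V_1\setminus Q_A}$ is a fresh $\bbP_{V_1\setminus Q_A}$-sample independent of $\caF_{Q_A}$, whereas the piece $\phi\,\pi_2\,\phi|_{Q_A\cap V_1}$ of $\hat\pi$ is $\caF_{Q_A}$-measurable. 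Thus $\hat\pi$ is, conditionally on $\caF_{Q_A}$, a fixed permutation $\sigma$ of the invariant set $Q_A\cap V_1$ glued to an independent $\bbP_{V_1\setminus Q_A}$-sample; using the additivity $\caH_{V_1}=\caH_S+\caH_{V_1\setminus S}$ over invariant $S$ together with Proposition \ref{prop:Markovspatial}(ii), and summing over the possible values of the pair $(Q_A\cap V_1,\sigma)$, one recovers $\bbP_{V_1}=\bbP_V$ — provided this pair has the same law as a random invariant set together with the restriction of a $\bbP_V$-permutation to it. Establishing that last identity — equivalently, that $(Q_A\cap V_2,\pi_2|_{Q_A\cap V_2})$ has the correct law despite $Q_A$ being built self-referentially from both $\pi_1$ and $\pi_2$ — is the step I expect to be most delicate, and it is exactly where one exploits the symmetry of $\bbP_{\tilde V}$ under $\phi$ rather than any correlation inequality.
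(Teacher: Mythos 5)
Your coupling architecture is reasonable, but it reduces the proposition to a strictly stronger claim that you leave unproven: that the swapped permutation $\hat\pi$ (equal to $\pi_1$ off $Q_A$ and to $\phi\,\pi_2\,\phi$ on $Q_A\cap V_1$) is \emph{exactly} $\bbP_V$-distributed. You flag this as the delicate step, and the route you sketch does not close it. After conditioning on $\caF_{Q_A}$ you would need the pair $\big(Q_A\cap V_1,\ \phi\pi_2\phi|_{Q_A\cap V_1}\big)$ to have the same law as an \emph{admissible} random invariant set of a genuine $\bbP_{V_1}$-sample together with the restriction of that sample to it (so that gluing a fresh independent $\bbP_{V_1\setminus Q_A}$-sample onto the complement reproduces $\bbP_{V_1}$, as in Lemma \ref{theo:samplingstrategy}). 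But $Q_A$ is built jointly from $\pi_1$, $\pi_2$ and the symmetrisation, so the event $\{Q_A\cap V_1=S\}$ is not determined by the swapped-in permutation alone; the admissibility/$\caF_S$-measurability that drives the resampling lemmas fails, and the needed identity does not follow from any result in the paper. The claim is in fact true, but proving it requires an extra idea you do not supply: the map $T$ that conjugates $\tilde\pi$ by $\phi$ inside $Q_A$ and leaves it unchanged outside preserves $\caH_{\tilde V}$, satisfies $Q_A(T(\tilde\pi))=Q_A(\tilde\pi)$ (any $\phi$-compatible, $\phi\tilde\pi\phi$-invariant subset of $Q_A$ is automatically $\tilde\pi$-invariant, so minimality is preserved), and maps $\{\tilde\pi|_A={\rm id}\}$ bijectively onto $\{\tilde\pi|_{\phi(A)}={\rm id}\}$; since the two conditional measures have the same normalisation $Z(U)Z(V)$, this energy-preserving involution yields $\hat\pi\sim\bbP_{V_1}$. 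Without some such argument your proof is incomplete at its central point.

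Note that the paper's own proof sidesteps the exact coupling identity entirely. It writes $\bbE_V(f)=\bbE_{\tilde V}(f_2\mid\tilde\pi|_A={\rm id})$ directly, splits $\bbE_{\tilde V}\big(f_i\,\mathbbm{1}\{\tilde\pi|_A={\rm id}\}\big)$ over $\{Q_A\cap B=\emptyset\}$ and its complement, and on the good event replaces $f_i$ by $\bbE_{Q_A^c}(f_i(.\oplus{\rm id}))$ via Proposition \ref{prop:strong Markov}; these two conditional expectations coincide for $i=1,2$ because $Q_A^c$ is $\phi$-symmetric and $\bbP_{Q_A^c}$ factorises over the disjoint union. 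The good-event contributions therefore cancel upon subtraction, which is all that is needed — a much weaker statement than $\hat\pi\sim\bbP_V$. Either supply the involution argument above or retreat to this conditional-expectation version.
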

\begin{proof}
As before, the superscript $\, \tilde{} \, $ will refer to objects that are defined
in $\tilde G$, the disjoint union of two copies of $G$.
Any graph permutation $\tilde {\pi}$ on $\tilde V$ can be written as 
$\pi_1 \oplus \pi_2$ with $\pi_i$ a permutation on $V_i$. 
For a 
$\mathcal{F}_B$-measurable random variable
$f$ on $S_V$, we define $f_1(\tilde {\pi}) = f(\pi_1)$ and 
$f_2(\tilde {\pi}) = f(\pi_2)$. We also define $A = V_1 \setminus U_1$,
where $U_1$ is $U$ regarded as 
a subset of $V_1$. In particular, $A \cap V_2 = \emptyset$.  
Then 
\begin{equation}\label{eq:expectationU}
\bbE_{U}(f) = \bbE_{\tilde V}(f_1 \, \, | \, \, \{ \tilde {\pi}|_A = {\rm id} \}), 
\end{equation}
and 
\[ 
\bbE_{V}(f) = \bbE_{\tilde V} (f_2) = \bbE_{\tilde V}(f_2 \, \, | \, \, \{ \tilde {\pi}|_A = {\rm id} \}).
\]
From the definition  $Q_A(\tilde {\pi}) $ we have that 
 $Q_A(\tilde {\pi}) = D$ if and only if \\
(i): $D \supset A$, $D$ is $\tilde {\pi}$-invariant and $\phi$-compatible;\\
(ii): Every strict subset of $D$ fails to have at least one of the three properties.\\
So $\{Q_A = D\}$ is $\caF_D$-measurable, and therefore $Q_A$ is an admissible random 
invariant set.  Now  (\ref{eq:expectationU}) gives,
\begin{multline}
\bbE_{\tilde V}(\mathbbm{1}\{\tilde {\pi}|_A = {\rm id}\}) \bbE_{U}(f) = 
\bbE_{\tilde V} (f_1 \mathbbm{1}\{\tilde {\pi}|_A = {\rm id}\}) = \\ \bbE_{\tilde V}\big(f_1 \mathbbm{1}\{\tilde {\pi}|_A = {\rm id}\} \mathbbm{1}\{Q_A \cap B = \emptyset\}\big) + 
\bbE_{\tilde V}\big(f_1 \mathbbm{1}\{\tilde {\pi}|_A = {\rm id}\} \mathbbm{1}\{Q_A \cap B \neq \emptyset\}\big),
\end{multline}
where $B$ is regarded as a subset of $V_1$.
The first term of the right hand side is equal to 
\[
\bbE_{\tilde V} \Big( \bbE_{\tilde V} ( f_1 \mathbbm{1}\{\tilde {\pi}|_A = {\rm id}\} \mathbbm{1}\{Q_A \cap B = \emptyset\} \Big| \caF_{Q_A} )\Big) = 
\bbE_{\tilde V} \Big( \bbE_{Q^c} \big (f_1 ( .  \oplus id)  \big) \mathbbm{1}\{\tilde {\pi}|_A = {\rm id}\}  \mathbbm{1}\{{Q_A} \cap B = \emptyset\}\Big),
\]
since $\mathbbm{1}\{\tilde {\pi}|_A = {\rm id}\}$ and $\mathbbm{1}\{{Q_A} \cap B = \emptyset\}$ are both 
$\caF_{Q_A}$-measurable, and thus Proposition \ref{prop:strong Markov} applies. But by the 
symmetry of ${Q_A}$, we have $\bbE_{Q_A}(f_1) = \bbE_{Q_A}(f_2)$. Thus we can do the same 
calculation with $f_2$ instead of $f_1$, and subtract the results. If in addition we
divide by $\bbE_{\tilde V}(\mathbbm{1}\{\tilde {\pi}|_A = {\rm id}\})$ 
we arrive at 
\begin{equation}
	\bbE_{U}(f) - \bbE_{V}(f) = \bbE_{\tilde V} \Big( (f_1 - f_2) 
	\mathbbm{1}\{{Q_A} \cap B \neq \emptyset\} \Big| \{ \tilde {\pi}|_A = {\rm id} \}\Big),
\end{equation}
which implies (\ref{eq:boundary condition decay}) and concludes the proof of the proposition.
\end{proof}

When we want to compare two different subsets $U_1$ and $U_2$ of $V$, 
we could slightly adapt the definition of $A$ above to get a similar estimate. 
In most cases, simply using the triangle inequality on 
\eqref{eq:boundary condition decay} is enough. 

In order to estimate the right-hand side of (\ref{eq:boundary condition decay}), we now need 
a tool to show the existence of invariant subsets with 
certain prescribed symmetries, and containing certain prescribed subsets of $V$. 
This tool is iterative sampling. 
Iterative sampling is a procedure to build a 
$\bbP_V$-distributed random variable step by step, reminiscent of the way that 
the full path of a Markov chain can be sampled step by step. In words, what we do is 
the following: we first sample a random permutation from $\bbP_V$, but keep only the 
cycles intersecting a (possibly random) set $K_1$. The only restriction is that $K_1$ may
not depend on the permutation we just sampled, it has to be chosen independently. We 
end up with a set $D_1$ where we have kept the cycles, and a set $B_1$ where we have 
discarded them, where $D_1$ and $B_1$ are disjoint and their union is $V$.
We then resample the permutation, independently, inside the subgraph 
generated by $B_1$. Again, we 
only keep the cycles intersecting some set $K_2$ that may depend on 
what has happened before and some external randomness, but not on the permutation inside
$B_1$. We carry on until we exhaust $V$. Formally, the definition is as follows.  

\begin{definition}
	\label{def:sampling strategy}
	A {\em sampling strategy} for spatial random permutations on a finite graph 
	$G = (V,E)$ consists of two families of random variables $(\sigma_A)_{A \subset V}$ 
	and $(K_A)_{A \subset V}$ such that
	\begin{itemize}
		\item [(i):] for all $A$, $\sigma_A$ is a $\bbP_A$-distributed random variable, in particular has values in $S_A$. $(\sigma_A)_{A \subset V}$ is a family of independent 
	random variables.
	\item[(ii):] $K_A$ takes values in the power set $\caP(A)$ for all $A$, and $\bbP(A = \emptyset) = 0$ when $A \neq \emptyset$.
	\item[(iii):] For each $A \subseteq V$, the random variable $K_A$ is independent of the family $(\sigma_B)_{B \subseteq A}$. 	
	\end{itemize}
\end{definition}

Note in particular that we require that $K_A$ is independent of $\sigma_A$. Given a 
sampling strategy, we define a \textit{recursive sampling procedure} as follows: We set 
$B_1 = V$, and for $i \geq 1$ we set 
\begin{equation} \label{eq:sampling procedure}
\pi_i = \sigma_{B_i}, \quad D_i = \Or_{\pi_i} (K_{B_i}), \quad B_{i+1} = B_i \setminus D_i.
\end{equation}
Above $\Or_\pi(A) = \{ \pi^j(x): x \in A, j \in \bbN \}$ denotes the orbit of $A$ under 
$\pi$. 
This way we end up with a sequence $(B_i, D_i, \pi_i)$ where $D_i \in {\rm Inv(\pi_i)}$,
and where the $D_i$ form a partition of $V$. It follows that the map 
\begin{equation}
	\label{eq:big permutation}
	\boldsymbol{\pi}: V \to V, \quad \boldsymbol{\pi}(x) = \pi_i(x) \text{ whenever } x \in D_i
\end{equation}
is a random element of $S_V$. 

\begin{lemma}\label{theo:samplingstrategy}
	For any sampling strategy, the distribution of $\boldsymbol{\pi}$ is $\bbP_V$.
\end{lemma}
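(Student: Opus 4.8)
The plan is to prove the lemma by induction on $|V|$, isolating the first step $(\pi_1,D_1,B_2)$ of the recursive sampling procedure \eqref{eq:sampling procedure} and controlling it with the spatial Markov property, Proposition~\ref{prop:Markovspatial}. For $|V|\le 1$ the claim is trivial, since $S_V$ is then a single point. For larger $V$ one uses that $K_{B_1}\ne\emptyset$ almost surely (Definition~\ref{def:sampling strategy}(ii)), so $D_1=\Or_{\pi_1}(K_{B_1})\supseteq K_{B_1}$ is nonempty and $B_2=V\setminus D_1$ is strictly smaller than $V$.

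For the inductive step I would first reduce to a single sampling step. Condition on $\caG_1:=\sigma(\sigma_V,K_{B_1})$; this makes $\pi_1=\sigma_V$, $D_1=\Or_{\pi_1}(K_{B_1})$ and $B_2=V\setminus D_1$ all measurable, with $B_2\subsetneq V$. The key point is that, conditionally on $\caG_1$, the truncated families $(\sigma_A)_{A\subseteq B_2}$ and $(K_A)_{A\subseteq B_2}$ still form a sampling strategy for the graph generated by $B_2$: the $\sigma_A$ with $A\subseteq B_2$ are jointly independent of $(\sigma_V,K_{B_1})$ --- by mutual independence of the $\sigma$-family and independence of $K_{B_1}=K_V$ from all the $\sigma$'s --- hence retain their joint law after conditioning, while each $K_A$ with $A\subseteq B_2$ stays independent of $(\sigma_B)_{B\subseteq A}$ conditionally on $\caG_1$ because $\caG_1$ is itself independent of $(\sigma_B)_{B\subseteq B_2}$. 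By the induction hypothesis on $B_2$, $\boldsymbol\pi|_{B_2}$ is $\bbP_{B_2}$-distributed conditionally on $\caG_1$, depending on $\caG_1$ only through $B_2$. Hence, for a fixed target $\tau\in S_V$ (noting that on $\{\pi_1|_{D_1}=\tau|_{D_1}\}$ the set $V\setminus D_1$ is automatically $\tau$-invariant, so $\tau|_{V\setminus D_1}\in S_{V\setminus D_1}$),
\[
\bbP(\boldsymbol\pi=\tau)=\bbE\big[\mathbbm{1}\{\pi_1|_{D_1}=\tau|_{D_1}\}\,\bbP_{V\setminus D_1}(\tau|_{V\setminus D_1})\big].
\]

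To evaluate the right-hand side, set $D^*_K:=\Or_\tau(K)$, which contains $K$ and is $\tau$-invariant. If $\pi_1|_{D_1}=\tau|_{D_1}$ then, since $D_1$ is $\pi_1$-invariant, contains $K_{B_1}$, and the $K_{B_1}$-orbit under $\pi_1$ only uses the values of $\pi_1$ on $D_1$, one gets $D_1=\Or_\tau(K_{B_1})=D^*_{K_{B_1}}$; conversely $\{\pi_1|_{D^*_{K_{B_1}}}=\tau|_{D^*_{K_{B_1}}}\}$ already forces $D_1=D^*_{K_{B_1}}$. Using this together with $K_{B_1}=K_V\perp\pi_1=\sigma_V$, conditioning on $K_V$ gives $\bbP(\boldsymbol\pi=\tau)=\sum_K\bbP(K_V=K)\,\bbP(\pi_1|_{D^*_K}=\tau|_{D^*_K})\,\bbP_{V\setminus D^*_K}(\tau|_{V\setminus D^*_K})$. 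For each $K$ the event $\{\pi_1|_{D^*_K}=\tau|_{D^*_K}\}$ is $\caF_{D^*_K}$-measurable and forces $D^*_K\in{\rm Inv}(\pi_1)$, so Proposition~\ref{prop:Markovspatial}(i)--(ii) gives $\bbP(\pi_1|_{D^*_K}=\tau|_{D^*_K})=\tfrac{Z(D^*_K)\,Z(V\setminus D^*_K)}{Z(V)}\,\bbP_{D^*_K}(\tau|_{D^*_K})$. Multiplying by $\bbP_{V\setminus D^*_K}(\tau|_{V\setminus D^*_K})$ the two partition functions cancel, and by additivity of $\caH_V$ along the invariant splitting $V=D^*_K\cup(V\setminus D^*_K)$ the product equals $e^{-\alpha\caH_V(\tau)}/Z(V)=\bbP_V(\tau)$, independently of $K$. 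Summing over $K$ yields $\bbP(\boldsymbol\pi=\tau)=\bbP_V(\tau)$.

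I expect the main obstacle to be the reduction step, namely the careful verification that conditioning on $\caG_1$ preserves all three defining properties of a sampling strategy on $B_2$ --- the delicate one being that $K_A$ remains independent of $(\sigma_B)_{B\subseteq A}$, which relies on $\caG_1$ being independent of $(\sigma_B)_{B\subseteq B_2}$, itself a consequence of the independence requirements in Definition~\ref{def:sampling strategy}. A secondary, lighter point is the measurability claim $\{\pi:\Or_\pi(K)=D\}\in\caF_D$ needed to invoke Proposition~\ref{prop:Markovspatial}: it holds because whether $\Or_\pi(K)=D$ is determined by the values $\pi(x)$, $x\in D$, alone, since these already reveal whether $D$ is $\pi$-invariant and, if so, whether the $K$-orbit exhausts $D$.
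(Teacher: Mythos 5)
Your argument is correct and is essentially the paper's own proof: both peel off the first sampling step, check that conditionally on $(\sigma_V,K_V)$ the truncated families form a sampling strategy on the smaller graph $B_2$, and evaluate the first step's contribution through the factorization of the Gibbs weight over the invariant set $D=\Or_\tau(K_V)$ (you route this through Proposition \ref{prop:Markovspatial}(i)--(ii), while the paper cancels the partition functions by hand; the joint-independence reading of Definition \ref{def:sampling strategy}(iii) that your reduction step needs is exactly the one the paper's own verification uses). The only structural difference is the induction variable --- you induct on $|V|$ with a trivial base case, whereas the paper inducts on the number of sampling steps via its notion of $n$-step strategy --- which changes nothing essential.
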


\begin{proof}
	We prove the theorem by induction on the number of sets $K_i$. Let us call 
	$(K_A)_{A \subseteq V}$ an $n$-step sampling strategy if $K_A(\omega) = A$ 
	for all $\omega$ in the underlying probability space such that $A = B_{n+1}(\omega)$. 
	Since the event $\{A = B_{n+1}\}$ is independent of 
	$(\sigma_{\tilde A})_{\tilde A \subseteq A}$, any sampling strategy can be turned into
	an $n$-step sampling strategy, and the union over the $n$-step sampling strategies for 
	all $n$ gives all sampling strategies.   
	
	For a one-step sampling strategy, we have that $K_A = A$ for all $A \neq V$. Putting 
	$D_A(\bar \pi) = \Or_{\bar \pi}(A)$, we compute 
	\begin{equation} \label{eq:decomp}
	\bbP(\boldsymbol{\pi} = \bar \pi) = \sum_{A \subset V} \bbP(\boldsymbol{\pi} = 
	\bar \pi, K_V = A) = \sum_{A \subset V} \bbP(\sigma_V = \bar \pi |_{D_A(\bar \pi)}, 
	\sigma_{V \setminus D_A(\bar \pi)} = \bar \pi |_{V \setminus D_A(\bar \pi)}, K_V = A).
	\end{equation}
	For fixed $A$, all three events in the probability above are independent and moreover
	\[
	\bbP(\sigma_V = \bar \pi |_{D_A(\bar \pi)}) = \frac{1}{Z(V)} 
	\e{-\alpha \caH_{D_A(\bar \pi)}(\bar \pi)} Z \big( V \setminus D_A(\bar \pi) \big)
	\]
	 and 
	\[
	\bbP(\sigma_{V \setminus D_A(\bar\pi)} = \bar \pi |_{V \setminus D_A(\bar \pi)}) = \frac{1}{Z \big( V \setminus D_A(\bar \pi) \big) } 
	\e{-\alpha \caH_{V \setminus D_A(\bar \pi)}(\bar \pi)} .
	\]
	Since $\caH_{D_A(\bar \pi)}(\bar \pi) + \caH_{V \setminus D_A(\bar \pi)}(\bar \pi) = 
	\caH_V(\bar \pi)$, the product of the two terms above does not depend on $A$ and is 
	equal to $\bbP_V(\bar \pi)$, and summing \eqref{eq:decomp} over $A$ gives the result
	in the case of one-step sampling strategies. 
	
 	Now assume that the claim holds for all $n$-step sampling strategies up to some 
 	$n \in \bbN$. Let $(\sigma_A, K_A)_{A \subset V}$ 
 	be an $n+1$-step sampling strategy.  For fixed $\bar \pi \in S_V$ and 
 	$\bar K \subset V$ with $\bbP(K_V = \bar K, \sigma_V = \bar \pi) > 0$ we define the 
 	conditional measure 
 	\[
 	\bbP^{\bar \pi, \bar K} = \bbP( . | K_V = \bar K, \sigma_V = \bar \pi).
 	\]
 	We write $\bar D = \Or_{\bar \pi}(\bar K)$ and $\bar B = V \setminus \bar D$, and 
 	claim that under the measure $\bbP^{\bar \pi, \bar K}$, the families 
 	$(\sigma_A)_{A \subset \bar B}$ and $(K_A)_{A \subset \bar B}$ form a 
 	sampling strategy for random permutations on the graph generated by $\bar B$. 
 	To check this claim, note that the event 
 	$\{K_V = \bar K, \sigma_V = \bar \pi\}$ is independent of the family 
 	$(\sigma_A)_{A \subset \bar B}$, and thus for all choices of $\pi_A \in S_A$, 
 	\begin{equation}
 		\label{eq:first indep}
 		\bbP^{\bar \pi, \bar K}(\sigma_{A} = \pi_A \, \, \forall A \subset \bar B) = 
 		\bbP(\sigma_{A} = \pi_A \, \, \forall A \subset \bar B).
 	\end{equation}
 	This shows independence of the family $(\sigma_A)_{A \subset \bar B}$. Furthermore, 
 	for $A \subset \bar B$, $\kappa \subset A$, and $A_1, \ldots A_m \subseteq A$, we have 
 	\[
 	\bbP^{\bar \pi, \bar K}(K_A = \kappa, \sigma_{A_i} = \pi_{A_i} \, \forall i ) = 
 	\frac{\bbP(K_A = \kappa, K_V = \bar K, \sigma_V = \bar \pi, \sigma_{A_i} = \pi_i \, \forall i)}{\bbP(K_V = \bar K, \sigma_V = \bar \pi)} = \bbP^{\bar \pi, \bar K}
 	(K_A = \kappa) \prod_{i=1}^m \bbP(\sigma_{A_i} = \pi_i).
 	\]
 	Thus using \eqref{eq:first indep} from right to left, we see that $K_A$ is independent
 	of the family $(\sigma_{\tilde A})_{\tilde A \subset A}$ under 
 	$\bbP^{\bar \pi, \bar K}$, and thus we have indeed a sampling strategy, 
 	which in addition clearly is an $m-1$-step sampling strategy. Now, 
 	\[
 	\begin{split}
 	\bbP(\boldsymbol{\pi} & = \bar \pi) = \sum_{\bar K \subset V} \,\, \sum_{\bar \sigma \in 
 	S_V: \bar \sigma|_{\bar D} = \bar \pi|_{\bar D}} \bbP(\boldsymbol{\pi} |_{\bar B} = 
 	\bar \pi |_{\bar B}, K_V = \bar K, \sigma_V = \bar \sigma) = \\
 	& = \sum_{\bar K \subset V} \,\, \sum_{\bar \sigma \in 
 	S_V: \bar \sigma|_{\bar D} = \bar \pi|_{\bar D}} \bbP^{\bar \pi, \bar K}(\boldsymbol{\pi} = \bar \pi|_{\bar B}) \bbP(K_V = \bar K, \sigma_V = \bar \sigma) \\
 	& = \sum_{\bar K \subset V} \,\, \sum_{\bar \sigma \in 
 	S_V: \bar \sigma|_{\bar D} = \bar \pi|_{\bar D}} \bbP_{\bar B}(\bar \pi) \bbP(K_V = \bar K)
 	\e{-\alpha \caH_V(\bar \sigma)} \frac{1}{Z(V)}.
 	\end{split}
 	\]
 	In the last line we used the induction hypothesis, and the independence of $K_V$ and 
 	$\sigma_V$. For fixed $\bar K$, the sum over $\bar \sigma$ can now be carried out, 
 	and the resulting term is 
 	\[
 	\bbP_{\bar B}(\bar \pi) \bbP(K_V = \bar K) \e{-\alpha H_{\bar D}(\bar \pi)}  
 	\frac{Z(\bar B)}{Z(V)} = \bbP(K_V = \bar K) \bbP_V (\bar \pi). 
 	\]
 	The result now follows by summing over $\bar K$.
 	\end{proof}

Let us now come back to the task of constructing invariant sets with prescribed 
symmetries. We will be slightly more general than in the discussion leading up to 
\eqref{eq:boundary condition decay}, as this generality will be needed further on.
Let $G=(V,E)$ be a finite graph. 
A {\em symmetry} of $G$ is a bijective map $\phi$ on $V$ such that 
for all $x,y \in V$, $\{\phi(x),\phi(y)\} \in E $ if and only if $(x,y) \in E$. 
For a group $\Phi$ of symmetries on $G$ and an element $\pi \in S_V$, we say that 
$U \subset V$ is a $\Phi$-compatible $\pi$-invariant set if $U$ 
is invariant under $\pi$ as well as under all $\phi \in \Phi$. 

Let us first describe our sampling procedure in words. We start with the 
full graph $(V,E)$, $A \subset V$, and choose the symmetrization 
$\Phi(A) := \bigcup_{\phi \in \Phi} \phi(A)$ of $A$ as our
first set of which we want to keep the cycles. We draw a sample 
$\sigma_V$, and look at the cycles intersecting $K_1 := \Phi(A)$. If none 
of the cycles of $\sigma_V$ intersecting $K_1$ leaves $K_1$, this means
that $K_1$ is already a $\Phi$-compatible, $\boldsymbol{\pi}$-invariant 
subset of $V$, and we are done. 
Otherwise, we take $D_1 = {\rm Or}(K_1)$, and $B_2 = 
V \setminus D_1$. $B^c_2$ is in general not a $\Phi$-compatible subset of $V$:
given a point $x \in D_1 \setminus K_1 \subset B^c_2$, all of its images under maps
$\phi \in \Phi$ might be elements of $B_2$. Since $x$ is an element
of $B_2^c$, this means that $B_2^c$ is not $\Phi$-compatible in this case.
We define $\tilde K_2 = B_2 \cap \Phi(B_2^c)$ (see also Figure \ref{Fig:samplingprocedure}).
$\tilde K_2$ may still be 
empty, e.g.\ if we are lucky enough so that all cycles leaving $K_1$ jointly
cover a $\Phi$-compatible set. In that case, we are done, and $B^c_2$ is the 
desired set, as it is $\Phi$-compatible, $\pi$-invariant and contains $A$.
Otherwise, we put $K_2 = \tilde K_2$. 
By the considerations we have just 
made, it follows that the cardinality of $K_2$ is bounded by 
$( |\Phi| - 1 ) |D_1 \setminus K_1|$. This estimate will be useful later on. 
We now sample a random permutation $\sigma_{B_2}$ and keep all the cycles 
intersecting $K_2$ and arrive to a set $D_2$. Again, if $D_2 = K_2$, 
then $V \setminus K_2$ is a $\Phi$-compatible, $\boldsymbol{\pi}$-invariant
subset of $V$, and we are done. Otherwise, we repeat the procedure, i.e.\ 
we set $B_3 = B_2 \setminus D_2$ and $\tilde K_3 = B_3 \cap \Phi(B_3^c)$. 
\begin{figure}
\centering
\includegraphics[scale=0.25]{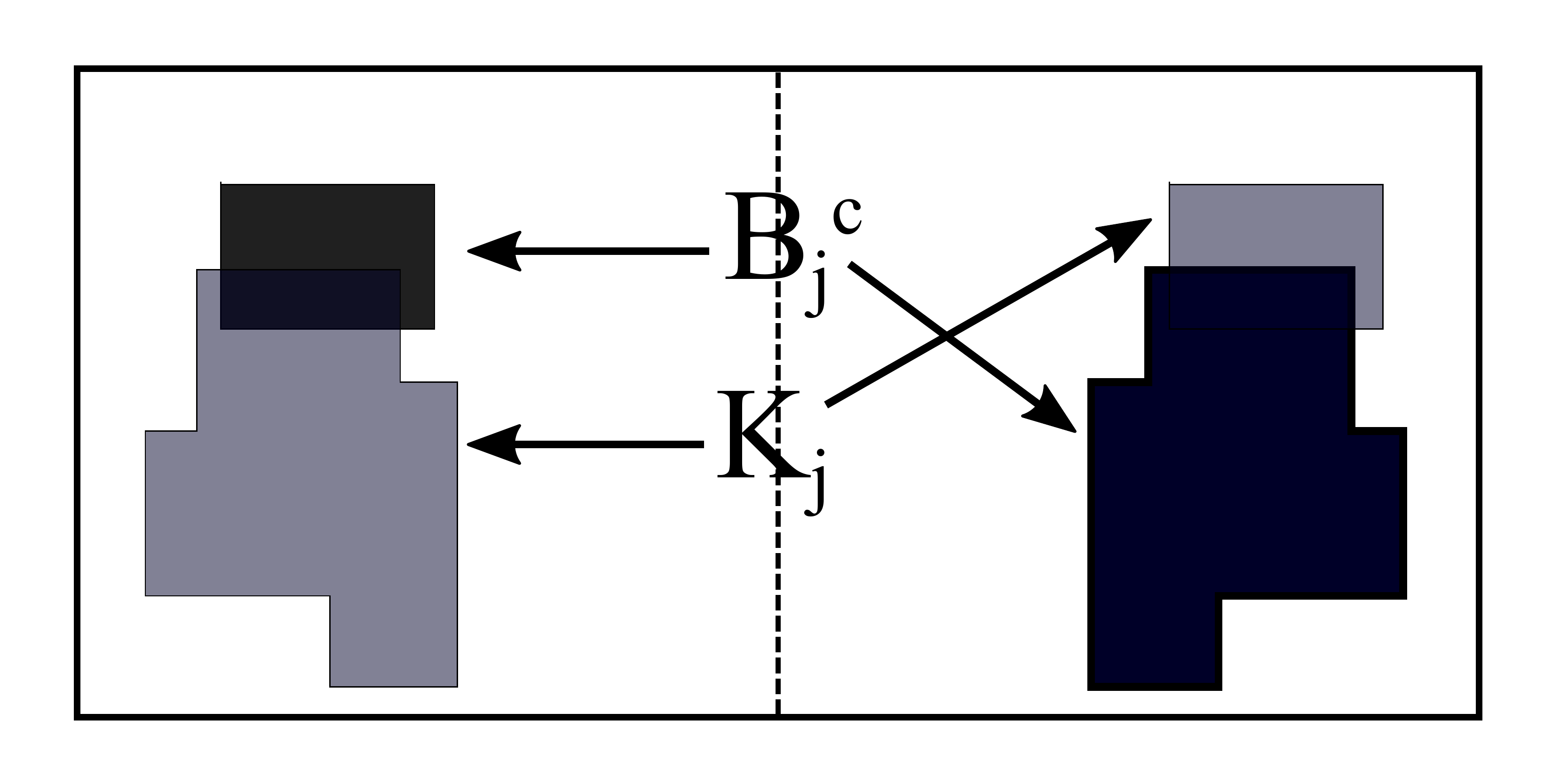}
\caption{When the group of symmetries $\Phi$ contains the identity and the reflection with respect to the dotted vertical line and $B_j^c$ corresponds to the black set in the figure, then $K_j$ corresponds to the grey set.}
\label{Fig:samplingprocedure}
\end{figure}
As before, we have $|K_3| \leq ( |\Phi| - 1) |D_2 \setminus K_2|$. We continue in 
this way until we exhaust $V$ (in which case we have found no non-trivial
$\Phi$-compatible subset), or until at some point $D_j = K_j$, in which case 
$B_{j+1}^c = \bigcup_{i \leq j} D_i$ is a $\Phi$-compatible, $\boldsymbol{\pi}$-invariant  set containing $A$.

Formally, the sampling strategy is the following. The family 
$(\sigma_B)_{B \subset V}$ are just independent random variables where 
$\sigma_B$ is $\bbP_B$-distributed. For given subset $A$, the maps 
$K_B$ are defined by  
\begin{equation}
	\label{eq:sampling strat 1}
	K_V = \Phi(A), \quad \text{ and for } B \neq V:  \quad  K_B = B \cap \Phi(B^c) \text{ if } 
	B \cap \Phi(B^c) \neq \emptyset, \text{ and } K_B = B 
	\text{ otherwise.} 
\end{equation}
It is obvious that this is indeed a sampling strategy. For any configuration
$\omega$ we put, 
\begin{equation}
\label{eq:hat A}	
N(\omega) = \inf \{n \in \bbN: K_{B_n(\omega)}(\omega) = B_n(\omega) \}, \text{  and }
\hat A(\omega) = B_{N(\omega)}^c (\omega).
\end{equation}
Then $\hat A(\omega)$ is a $\boldsymbol{\pi}(\omega)$-invariant subset, compatible 
with $\Phi$, and containing $A$. 
Recall that, from Lemma \ref{theo:samplingstrategy}, $\boldsymbol{\pi}$
is distributed like $\mathbb{P}_V$.

We say that $U$ separates $A \subset V$ from $B 
\subset V$ if $A \subset U$ and $B \cap U = \emptyset$. Thus, 
if in addition $\hat A(\omega) \cap B = \emptyset$ 
for some subset $B$, we have found a $\Phi$ compatible, 
$\boldsymbol{\pi}(\omega)$-invariant subset separating $A$ from $B$. We summarize: 

\begin{proposition}
	\label{prop:invarSubset1}
	For $A,B \subset V$ and $\hat A$ defined as in \eqref{eq:hat A}, we have 
	\[
	\bbP_V(\text{A $\pi$-invariant, $\Phi$-compatible subset separating $A$ from $B$ exists}) 
	\geq \bbP(\hat A \cap B = \emptyset).
	\]	
\end{proposition}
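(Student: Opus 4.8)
The plan is to recognize that Proposition \ref{prop:invarSubset1} is essentially a tautology once one correctly interprets the recursive sampling procedure built from the sampling strategy \eqref{eq:sampling strat 1}, so the ``proof'' amounts to checking that the event $\{\hat A \cap B = \emptyset\}$ literally witnesses the existence of the desired set, and then transferring this statement from $\boldsymbol{\pi}$ to $\bbP_V$ via Lemma \ref{theo:samplingstrategy}.

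\textbf{Step 1: the sampling strategy is admissible.} First I would note that \eqref{eq:sampling strat 1} defines a genuine sampling strategy in the sense of Definition \ref{def:sampling strategy}: the $\sigma_B$ are independent $\bbP_B$-distributed, and each $K_B$ is a deterministic function of $B$ alone, hence trivially independent of $(\sigma_{\tilde B})_{\tilde B \subseteq B}$ and satisfies $\bbP(K_B = \emptyset) = 0$ for $B \neq \emptyset$ (when $B \cap \Phi(B^c) = \emptyset$ we set $K_B = B \neq \emptyset$). So Lemma \ref{theo:samplingstrategy} applies and $\boldsymbol{\pi}$, the permutation assembled via \eqref{eq:big permutation}, is $\bbP_V$-distributed.

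\textbf{Step 2: on $\{\hat A \cap B = \emptyset\}$ the set $\hat A$ does the job.} The substantive content is to verify that for every configuration $\omega$, the set $\hat A(\omega) = B_{N(\omega)}^c(\omega)$ is (a) $\boldsymbol{\pi}(\omega)$-invariant, (b) $\Phi$-compatible, and (c) contains $A$; this is already asserted in the text just before the proposition, but I would spell out the induction. Invariance: since $B_{N} = V \setminus \bigcup_{i<N} D_i$ and each $D_i = \Or_{\pi_i}(K_{B_i})$ is $\pi_i$-invariant, and $\boldsymbol{\pi}$ agrees with $\pi_i$ on $D_i$, the complement $\bigcup_{i<N} D_i$ is $\boldsymbol{\pi}$-invariant provided the stopping rule ensures $D_{N-1}=K_{B_{N-1}}$... — more cleanly: at step $N$ we have $K_{B_N} = B_N$, which by \eqref{eq:sampling strat 1} happens precisely when $B_N \cap \Phi(B_N^c) = \emptyset$, i.e. $\Phi(B_N^c) \subseteq B_N^c$, i.e. $B_N^c = \hat A$ is $\Phi$-invariant; that is (b). For (a), one checks inductively that $B_i^c = \bigcup_{j<i} D_j$ is $\boldsymbol{\pi}$-invariant: $D_j$ is $\pi_j$-invariant and $\boldsymbol{\pi}|_{D_j} = \pi_j|_{D_j}$, and the $D_j$ are disjoint, so their union is closed under $\boldsymbol{\pi}$. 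For (c), $A \subseteq \Phi(A) = K_V \subseteq D_1 \subseteq \hat A$ since $N \geq 1$ and $D_1 = \Or_{\pi_1}(K_V)$. Hence, \emph{whenever} $\hat A(\omega) \cap B = \emptyset$, the set $\hat A(\omega)$ is a $\boldsymbol{\pi}(\omega)$-invariant, $\Phi$-compatible set separating $A$ from $B$; one must also argue $N(\omega) < \infty$ always, which holds because $B_i$ is strictly decreasing whenever $K_{B_i} \neq B_i$ (then $D_i \supsetneq$ something nonempty leaving... at worst $B_{i+1} \subsetneq B_i$), and $V$ is finite, so the procedure terminates.

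\textbf{Step 3: transfer and conclude.} Finally I would write
\[
\bbP_V(\exists\ \text{$\pi$-invariant, $\Phi$-compatible set separating $A$ from $B$})
= \bbP\big(\exists\ \text{such set for } \boldsymbol{\pi}\big)
\geq \bbP(\hat A \cap B = \emptyset),
\]
where the equality is Lemma \ref{theo:samplingstrategy} (the existence event is a measurable function of the permutation, so its $\bbP_V$-probability equals its probability under the law of $\boldsymbol{\pi}$) and the inequality is the inclusion of events established in Step 2. I do not expect any real obstacle here; the only mildly delicate point is the bookkeeping in Step 2 — making sure the stopping time $N$ is well-defined and finite and that the equivalence ``$K_{B_N} = B_N \iff B_N^c$ is $\Phi$-invariant'' is correctly extracted from \eqref{eq:sampling strat 1} — but this is routine set manipulation rather than a genuine difficulty.
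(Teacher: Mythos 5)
Your proof follows the paper's intended approach: verify that $\hat A$ is always $\boldsymbol\pi$-invariant, $\Phi$-compatible and contains $A$, so that $\{\hat A \cap B = \emptyset\}$ is contained in the separation event, and transfer via Lemma \ref{theo:samplingstrategy}. The paper gives no explicit proof (the proposition is labelled a summary of the preceding discussion), so your care in Step~2 is welcome — but it exposes a genuine gap. You assert that $K_{B_N}=B_N$ ``happens precisely when $B_N \cap \Phi(B_N^c) = \emptyset$''. Under a literal reading of \eqref{eq:sampling strat 1} this is false: the ``if'' branch already sets $K_{B_N}=B_N \cap \Phi(B_N^c)$, and this equals $B_N$ whenever $\emptyset \neq B_N \subseteq \Phi(B_N^c)$, in which case the stopping time fires but $\hat A = B_N^c$ need not be $\Phi$-compatible. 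Concretely, take $V=\{a,b,c,d\}$ with edges $\{a,b\},\{c,d\}$, $\Phi=\{\mathrm{id},\phi\}$ with $\phi$ exchanging $a\leftrightarrow c$, $b\leftrightarrow d$, $A=\{a\}$ (so $K_V=\{a,c\}$) and $B=\{d\}$. On $\{\sigma_V=(ab)\}$ one gets $D_1=\{a,b,c\}$, $B_2=\{d\}$, $\Phi(B_2^c)=V$, hence $K_{B_2}=\{d\}=B_2\neq\emptyset$ via the ``if'' branch, $N=2$, and $\hat A=\{a,b,c\}$, which is not $\phi$-invariant. One can compute that $\bbP(\hat A\cap B=\emptyset)=1/(1+\e{-2\alpha})$ while $\bbP_V(\text{separator exists})=1/(1+\e{-2\alpha})^2$, so the stated inequality fails with $\hat A$ defined literally by \eqref{eq:hat A}.

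The gap is arguably inherited from the paper: the informal algorithm described in the text (``stop at the first $j$ with $D_j=K_j$, report $B_{j+1}^c$; if $V$ is exhausted, report $V$'') is correct, but its formalization $N=\inf\{n:K_{B_n}=B_n\}$, $\hat A = B_N^c$ also captures the spurious case above. The correct translation is $N=\inf\{n\geq 2: B_n\cap\Phi(B_n^c)=\emptyset\}$, i.e.\ stop only when the ``otherwise'' branch triggers (and exclude $n=1$ to avoid the degenerate $B_1=V$). Note this always terminates because $B_n$ strictly decreases to $\emptyset$, which vacuously satisfies the condition; and it yields $\Phi(B_N^c)\subseteq B_N^c$, hence $\Phi(B_N^c)=B_N^c$ since $\Phi$ consists of bijections of the finite set $V$. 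With this corrected stopping rule, the remaining checks in your Step~2 (that $\hat A$ is $\boldsymbol\pi$-invariant as a disjoint union of $\pi_j$-invariant $D_j$'s, and that $A\subseteq \Phi(A)=K_V\subseteq D_1\subseteq \hat A$ because now $N\geq 2$) and the transfer in Step~3 go through as written, and your proof becomes correct.
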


The previous proposition will only be 
useful if we have a way to control the probability that the 
random set $\hat A$ intersects $B$. At this point, we currently have no better tool than 
the following very crude estimate: if we write $d$ for the graph distance on $G$, then 
\begin{equation}
	\label{eq:observation}
| \hat A \setminus \Phi(A) | < d (\Phi(A),B) \quad \text{implies} \quad \hat A \cap B = \emptyset.
\end{equation}
Inequality (\ref{eq:observation}) holds true since by construction, from every $x \in \hat A$ a path in $G$ that is
entirely contained in $\hat A$ leads to 
$\Phi(A)$. Fortunately, this crude estimate will be sufficient for our purposes. 
We therefore now consider bounds on the cardinality of $\hat A$. 

We now introduce the notion of random permutation with
bounded cycle length. Even if so far in this section
we considered only finite graphs, we introduce this notion for graphs that might be finite
or infinite.
Recall that a random variable $X$ is {\em stochastically dominated} by another random 
variable $Y$ if $\bbP(X \geq k) \leq \bbP(Y \geq k)$ for all $k$. 
Let $\xi$ be a random variable on the positive integers. 
Consider a graph $(V,E)$ that might be finite 
or infinite.  
We say that our random permutation model has 
 \textit{cycle length bounded by
$\xi$} in $(V,E)$ if, uniformly in all finite sub-volumes $U \subset V$ and all points $x \in U$, 
the length of the cycle $\gamma_x$ containing $x$ is 
stochastically dominated by $\xi$. In other words, 
for all $\ell \geq 1$ we assume that  
\begin{equation}
\label{eq:bounddistributionP}
\sup_{\substack{ U \subset V \\ |U| < \infty } } \sup_{x \in U} \mathbb{P}_{U}( \,  |\gamma_x |\geq \ell ) 
\leq P( \xi \geq \ell).
\end{equation}
We use this notion in the next proposition.
\begin{proposition}
\label{prop:boundingorbit}
	Let $\xi$ be an integer-valued random variable, and 
	assume that a random permutation has cycle length bounded by $\xi$
	on the finite graph $(V,E)$.
	Let $(\xi_n)_{n \in \bbN}$ be a sequence of independent copies of $\xi$.
	Then for each $A \subset V$ and each $\ell \in \bbN$, we have 
	\[
	\bbP_V(|{\rm Or}(A)| \geq \ell) \leq \bbP(\sum_{i=1}^{|A|} \xi_i \geq \ell).
	\]	
\end{proposition}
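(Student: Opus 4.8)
The plan is to exhibit a sampling strategy (in the sense of Definition~\ref{def:sampling strategy}) that grows the orbit of $A$ one cycle at a time, revealing at most $|A|$ cycles, and then to couple the lengths of these cycles with the independent copies $\xi_1,\dots,\xi_{|A|}$ of $\xi$. Concretely, enumerate the points of $A$ as $a^1,\dots,a^{|A|}$. I would take the sampling strategy in which $K_V = \{a^1\}$, and for $B \subsetneq V$ I set $K_B = B \cap A$ if this set is non-empty (picking, say, the point with the smallest index as the ``active'' seed — but in fact keeping \emph{all} of $B\cap A$ works equally well since $\operatorname{Or}$ of a set is the union of the orbits of its points), and $K_B = B$ otherwise. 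This is visibly a sampling strategy: the $\sigma_B$ are independent $\bbP_B$-distributed variables, and each $K_B$ is deterministic given $B$, hence independent of $(\sigma_{\tilde B})_{\tilde B \subseteq B}$. By Lemma~\ref{theo:samplingstrategy} the resulting $\boldsymbol\pi$ is $\bbP_V$-distributed, and by construction $\operatorname{Or}_{\boldsymbol\pi}(A) = \bigcup_i D_i$ where the procedure terminates after at most $|A|$ non-trivial steps, since at each step at least one new point of $A$ is absorbed into $\bigcup_{j\le i} D_j$ (more precisely: as long as $B_{i+1}\cap A \neq \emptyset$ there is a seed to continue, and $|B_{i+1}\cap A| < |B_i \cap A|$ strictly, because the cycle of $a^{k_i}$ under $\sigma_{B_i}$ is contained in $D_i$).

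The key point is then the length estimate. At step $i$, conditionally on the history $(B_1,D_1,\pi_1,\dots,B_{i-1},D_{i-1},\pi_{i-1})$ — equivalently, conditionally on $B_i$ and the seed point $x_i := $ the chosen element of $B_i \cap A$ — the permutation $\sigma_{B_i}$ is drawn fresh from $\bbP_{B_i}$, and $D_i$ is the cycle $\gamma_{x_i}(\sigma_{B_i})$ (union of cycles of the at most $|A\cap B_i|$ seed points, if one keeps all of them; I will keep a single seed to make the bookkeeping cleanest). The hypothesis \eqref{eq:bounddistributionP}, applied with the finite subgraph generated by $B_i \subset V$ and the point $x_i \in B_i$, gives
\[
\bbP\big( |D_i| \geq m \,\big|\, B_i, x_i \big) = \bbP_{B_i}\big( |\gamma_{x_i}| \geq m \big) \leq \bbP(\xi \geq m)
\]
for every $m \geq 1$, and this bound is uniform over the conditioning. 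Hence $|D_i|$ is stochastically dominated by $\xi$ conditionally on the past, and a standard iterated-conditioning argument then shows that $|\operatorname{Or}(A)| \le \sum_{i=1}^{|A|} |D_i|$ is stochastically dominated by $\sum_{i=1}^{|A|}\xi_i$ with the $\xi_i$ i.i.d.\ copies of $\xi$. (If $B_{i+1}\cap A = \emptyset$ before step $|A|$ is reached, we simply have fewer than $|A|$ non-trivial summands, which only helps.)

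The one genuine subtlety — and the step I expect to require the most care — is the conditional stochastic domination: I must make sure that the bound on $|D_i|$ holds \emph{conditionally on the entire past} $\mathcal G_{i-1}:=\sigma(B_1,\pi_1,\dots,\pi_{i-1})$, not merely marginally, so that the standard lemma ``if $X_i$ is stochastically dominated by $\xi$ given $\mathcal G_{i-1}$ for each $i$, then $\sum X_i$ is dominated by $\sum \xi_i$ with $\xi_i$ independent'' applies. This works precisely because in a sampling strategy $\sigma_{B_i}$ is \emph{independent} of $(\sigma_{B})_{B \supsetneq B_i}$ and $B_i$ is determined by the latter; so conditionally on $\mathcal G_{i-1}$, the set $B_i$ and seed $x_i$ are fixed and $\sigma_{B_i} \sim \bbP_{B_i}$ still holds, which is exactly what \eqref{eq:bounddistributionP} needs. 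One should also note that $\xi$ takes values in the positive integers so the domination is phrased for $|\gamma_x|$ (number of vertices, which is $\geq 1$), matching \eqref{eq:bounddistributionP}; the constant cycle $|\gamma_{x_i}|=1$ case is harmless. Assembling these pieces gives the claimed bound $\bbP_V(|\operatorname{Or}(A)| \geq \ell) \leq \bbP\big(\sum_{i=1}^{|A|}\xi_i \geq \ell\big)$.
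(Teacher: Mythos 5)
Your proposal is correct and follows essentially the same route as the paper: a sampling strategy that reveals the cycles through $A$ one seed at a time, a conditional stochastic-domination bound on each revealed cycle length coming from the definition of ``cycle length bounded by $\xi$'' together with the independence built into the sampling strategy, and a sum-of-dominated-increments argument. The only cosmetic difference is that where you invoke a ``standard iterated-conditioning lemma,'' the paper makes this precise via its Comparison Lemma~\ref{lem:comparisonlemma} in the Appendix.
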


\begin{proof}
	We consider the following sampling strategy: let $|A| = n$ and let 
	$x^1, \ldots, x^n$ be the elements of $A$. We sample the cycles 
	intersecting $A$ one by one, in the order of the $x^j$. In other 
	words, for $B \cap A \neq \emptyset$ we set $j_B =  \min \{i \leq n: x^i \in B \}$,
	$x(B) = x^{j_B}$, and $K_B = \{x(B)\}$. 
	When $B \cap A = \emptyset$, we set $K_B = B$. This way, we get a
	sampling procedure with sets $(B_i)_{i \in \bbN}$. Let 
	$L_i = |\gamma_{x(B_i)}|$ be the number of vertices in the $i-th$ cycle that 
	is sampled. When $A$ is exhausted after $\tau$ steps, we set $L_i=0$ for 
	all $i > \tau$. Note that $\tau(\omega) \leq n$ for any $\omega$,
	where $\omega$ denotes the realization in the probability space
	of the sampling procedure.
	The sequence $(L_{i})_{i \in \bbN}$ of random variables
	is definitely not a Markov chain, and usually $L_i$ will not even be 
	measurable with respect to the $\sigma$-algebra generated by all the 
	other $L_k$. However, it has the property that for given $\bar B \subset V$ 
	and given cycle $\bar \gamma$, 
	\begin{equation}
	\label{eq:prooflemmabounding1bis}
	\bbP(L_{k+1} > \ell | B_k = \bar B \text{ and } 
	\gamma_{x(\bar B)} = \bar \gamma) = 
	\bbP_{\bar B \setminus \bar \gamma}
	(|\gamma_{x(\bar B \setminus \bar \gamma)}| > \ell) \leq 
	\bbP(\xi_{k+1} > \ell). 
	\end{equation}
	Since $L_k = |\gamma_k|$, we conclude that when $\caF_k$ is the 
	$\sigma$-algebra generated by $L_1, \ldots, L_k$, then 
	$$\bbP(L_{k+1} > \ell | \caF_k)(\omega) \leq \bbP(\xi_{k+1} > \ell)$$
	for all $\omega$. We have just checked Assumption 2 of our comparison
	Lemma, which we give in the Appendix as Lemma \ref{lem:comparisonlemma}.
	In the notation of that lemma, $(M^1_j)_{j \geq 1} = (L_j)_{j \geq 1}$, and  
	$(M^2_j)_{j \geq 0} = (\xi_j)_{j \geq 0}$. It is easy to see that Assumptions 1 and 3 also hold.
	The second statement of Lemma \ref{lem:comparisonlemma} then implies that 
	for any $k \in \mathbb{N}$,
	\[
	\bbP(\sum_{j=1}^k L_j \geq \ell) \leq 
	\bbP(\sum_{j=1}^k \xi_j \geq \ell).
	\]
	The statement of our proposition now follows when observing that 
	$|{\rm Or}(A)| = \sum_{j=1}^{|A|} L_j$.
\end{proof}


\begin{proposition}
	\label{prop:size of invariant set}
	Consider a sampling strategy with the property that there exists $M \in \mathbb{N}$ such that under the recursive sampling procedure (\ref{eq:sampling procedure}), the inequality 
	$$
	| K_{B_{i+1}} | \leq M \, | D_i \setminus K_{B_{i}}	 |
	$$
	holds almost surely in the case $D_i \setminus K_{B_i} \neq \emptyset$, and $K_{B_{i+1}} = B_{i+1}$ otherwise.
	Let  $N$ be the random number of steps	such that $K_{B_N} = B_N$, and let $\hat A = B_N^c$ be the set sampled before  
	$N$ (see also  \eqref{eq:hat A}).
	Assume further that the random permutation has 
	cycle length bounded by an integer-valued random variable $\xi$ on $(V,E)$
	and let $(Z_j)_{j \geq 0}$ be a Galton-Watson process 
	where the offspring is distributed according to $M( \xi - 1)$, and with 
	initial population $M |K_V|$. Let $W$ be the (possibly infinite) total
	population of the Galton-Watson process. Then
	for each $\ell \in \bbN$, we have 
	\[
	\bbP_V(|\hat A| \geq \ell) \leq \bbP( W \geq  \frac{M}{M+1} \, \ell).
	\]	
\end{proposition}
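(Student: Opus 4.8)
The plan is to set up a coupling between the iterative sampling procedure described in Proposition~\ref{prop:size of invariant set} and the Galton-Watson process $(Z_j)_{j\geq 0}$, where each ``generation'' of the Galton-Watson tree corresponds to one step of the recursive sampling procedure~\eqref{eq:sampling procedure}. First I would record the bookkeeping: at step $i$ we discard from $B_i$ the set $D_i = \Orb_{\pi_i}(K_{B_i})$, and the new target set $K_{B_{i+1}}$ has cardinality at most $M |D_i \setminus K_{B_i}|$ by hypothesis (and the procedure stops when $D_i = K_{B_i}$, i.e.\ when no cycle leaves the current target). The key observation is that $|D_i| = |\Orb_{\pi_i}(K_{B_i})|$, so by Proposition~\ref{prop:boundingorbit} (applied on the subgraph generated by $B_i$, which still has cycle length bounded by $\xi$) we have, conditionally on everything sampled before step $i$, that $|D_i|$ is stochastically dominated by $\sum_{j=1}^{|K_{B_i}|} \xi_j$ with the $\xi_j$ i.i.d.\ copies of $\xi$. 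Hence $|D_i \setminus K_{B_i}| = |D_i| - |K_{B_i}|$ is dominated by $\sum_{j=1}^{|K_{B_i}|} (\xi_j - 1)$, and therefore $|K_{B_{i+1}}|$ is dominated by $\sum_{j=1}^{|K_{B_i}|} M(\xi_j - 1)$.

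This last bound is precisely the branching rule: if we think of the $|K_{B_i}|$ points of the current target as individuals in generation $i$, each produces an i.i.d.\ number of offspring distributed as $M(\xi-1)$, and the total offspring count bounds $|K_{B_{i+1}}|$. The subtlety, which I would handle carefully, is that the domination at each step is \emph{conditional} on the past, so one cannot simply say $|K_{B_i}|$ equals the $i$-th generation size $Z_i$; instead one builds a coupling, generation by generation, in which $Z_i$ is constructed to dominate $|K_{B_i}|$ pointwise. Concretely, set $Z_0 = M|K_V| \geq |K_{B_1}|$ (recall $K_{B_1} = K_V = \Phi(A)$ after the first step, or more simply $|K_{B_1}| \le M|K_V|$ trivially holds after absorbing the first step into the bookkeeping --- I would state the indexing so that this is clean), and inductively, given the coupling up to generation $i$ with $|K_{B_i}| \leq Z_i$, attach to the first $|K_{B_i}|$ individuals of generation $i$ the offspring variables governing the sampling step (using the conditional stochastic domination and the comparison Lemma~\ref{lem:comparisonlemma} in the Appendix to realize them below i.i.d.\ $M(\xi-1)$ variables), and give the remaining $Z_i - |K_{B_i}|$ individuals fresh independent $M(\xi-1)$ offspring; this yields $|K_{B_{i+1}}| \leq Z_{i+1}$, completing the induction.

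Once the coupling is in place, the conclusion is a counting identity. The set $\hat A = B_N^c$ is the disjoint union $\bigcup_{i=1}^{N-1} D_i$ together with the final $D_N = K_{B_N} = B_N$-trivial contribution --- more precisely $\hat A = \bigcup_{i < N} D_i \cup (K_{B_N}$-part$)$; in any case $|\hat A| = \sum_{i} |D_i|$ over the steps taken. Now $|D_i| = |K_{B_i}| + |D_i \setminus K_{B_i}| \leq |K_{B_i}| + \frac{1}{M}|K_{B_{i+1}}|$, and summing a telescoping-type bound over all steps gives $|\hat A| \leq \sum_i |K_{B_i}| + \frac{1}{M}\sum_i |K_{B_i}| = \frac{M+1}{M} \sum_i |K_{B_i}| \leq \frac{M+1}{M} W$, where $W = \sum_{j \geq 0} Z_j$ is the total population. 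Equivalently, $|\hat A| \geq \ell$ forces $W \geq \frac{M}{M+1}\ell$, which is exactly the claimed inequality after taking $\bbP_V$-probabilities and using the coupling (so that $\bbP_V(|\hat A| \geq \ell) \leq \bbP(W \geq \frac{M}{M+1}\ell)$).

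The main obstacle I expect is the coupling step: making rigorous the passage from ``step-by-step conditional stochastic domination'' to ``pointwise domination by a genuine Galton-Watson process'' requires care, because the number of offspring variables needed at generation $i$ is itself random (it is $|K_{B_i}|$), and one must ensure the freshly added offspring in the dominating process are independent of the sampling procedure. This is precisely the kind of situation that the comparison Lemma~\ref{lem:comparisonlemma} in the Appendix is designed for (as used already in the proof of Proposition~\ref{prop:boundingorbit}), so I would invoke it in the form: a sequence whose conditional increments are dominated by i.i.d.\ variables has its partial sums dominated by the partial sums of those i.i.d.\ variables — applied here at the level of generation sizes rather than cycle lengths. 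The secondary bookkeeping issue is getting the constant $\frac{M}{M+1}$ exactly right, which amounts to the elementary manipulation $|D_i| \leq |K_{B_i}| + \frac{1}{M}|K_{B_{i+1}}|$ summed against $\sum_i |K_{B_i}| \leq W$; this is routine once the indexing convention for $N$, $B_N$, and the terminal step is fixed.
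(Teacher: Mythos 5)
Your overall strategy --- dominating the step-by-step quantities of the sampling procedure by a Galton--Watson process via Proposition \ref{prop:boundingorbit} and the comparison Lemma \ref{lem:comparisonlemma} --- is the same as the paper's, and the coupling/conditional-domination issues you flag are handled there in essentially the way you anticipate. However, your final counting step contains a reversed inequality that breaks the argument as written. You assert $|D_i| = |K_{B_i}| + |D_i\setminus K_{B_i}| \leq |K_{B_i}| + \frac{1}{M}|K_{B_{i+1}}|$, i.e.\ $|D_i\setminus K_{B_i}| \leq \frac{1}{M}|K_{B_{i+1}}|$. The hypothesis says the opposite: $|K_{B_{i+1}}| \leq M |D_i\setminus K_{B_i}|$ gives a \emph{lower} bound $|D_i\setminus K_{B_i}| \geq \frac{1}{M}|K_{B_{i+1}}|$, and the upper bound you need is genuinely false --- the set of newly swallowed vertices $D_i \setminus K_{B_i}$ can be arbitrarily large while $K_{B_{i+1}}$ is small (indeed the procedure may terminate at step $i+1$, in which case $K_{B_{i+1}}=B_{i+1}$ and there is no constraint at all relating it to $|D_i\setminus K_{B_i}|$). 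Consequently the deterministic bound $|\hat A| \leq \frac{M+1}{M}\sum_i |K_{B_i}|$ does not hold, and you cannot conclude by comparing $\sum_i |K_{B_i}|$ to $W$ alone.

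The repair --- and what the paper actually does --- is to make $q_i := |D_i\setminus K_{B_i}|$, rather than $|K_{B_i}|$, the dominated quantity. The hypothesis is used in its correct direction to obtain the deterministic bound $|\hat A| = \sum_i (q_i + |K_{B_i}|) \leq (M+1)\sum_i q_i$, since $|K_{B_{i+1}}| \leq M q_i$ lets one bound the sum of the $|K_{B_i}|$ by $M$ times the sum of the $q_i$ (the initial term $|K_V|$ being absorbed by the choice $Z_0 = M|K_V|$). Then Proposition \ref{prop:boundingorbit} gives $\bbP(q_j \geq \ell \mid \caF_{j-1}) \leq \bbP\big(\sum_{i=1}^{M q_{j-1}}(\xi_i-1) \geq \ell\big)$, the comparison lemma (conclusion (b)) yields that $\sum_j q_j$ is stochastically dominated by $\frac{1}{M}W$, and combining the two bounds gives exactly $\bbP_V(|\hat A| \geq \ell) \leq \bbP(W \geq \frac{M}{M+1}\ell)$. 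Your coupling picture does contain the needed information --- in the coupling one has $q_i \leq \frac{1}{M}Z_{i+1}$, i.e.\ $q_i$ is controlled by the \emph{offspring of generation $i$ in the dominating process}, which is what must replace the false $q_i \leq \frac{1}{M}|K_{B_{i+1}}|$ --- but as written the proof does not go through.
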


\begin{proof}
By definition, $\hat A(\omega) = \bigcup_{i=1}^{N(\omega)-1} 
D_i(\omega)$. Moreover, $K_{B_i(\omega)}(\omega) \subset D_i(\omega)$ for all $i$, thus for all $n$,
\[
\bigcup_{i=1}^n D_i(\omega) = \bigcup_{i=1}^n \Big( D_i(\omega) \setminus K_{B_i(\omega)}(\omega) \Big) \cup 
\bigcup_{i=1}^n K_{B_i(\omega)}(\omega). 
\]
Whenever $K_{B_i(\omega)}(\omega) \neq B_i(\omega)$, we have assumed $|K_{B_{i+1}(\omega)}(\omega)| \leq M |D_i(\omega) \setminus K_{B_i(\omega)}(\omega)|$.
Therefore we find that for all $\omega$, 
\[
|\hat A(\omega)| \leq (M+1) \sum_{i=1}^{N(\omega) -1} 
|D_i(\omega) \setminus K_{B_i(\omega)}(\omega)|. 
\]
Write $q_i(\omega) = |D_i(\omega) \setminus K_{B_i(\omega)}(\omega)|$ for brevity. 
Then for given sets $\bar B$ and $\bar K$, 
\[
\bbP(q_j \geq \ell | B_j = \bar B, K_{B_j} = \bar K) = 
\bbP_{\bar B}({\rm Or}(\bar K) - |\bar K| \geq \ell) \leq 
\bbP(\sum_{i=1}^{|\bar K|} (\xi_i-1) \geq \ell),
\]
where in the last step we used Proposition \ref{prop:boundingorbit}.
Let $\caF_j$ be the sigma-algebra generated by the $(q_k)_{k \leq j}$.
Since $|K_{B_j(\omega)}(\omega)| \leq M \, q_{j-1}(\omega)$ for any $\omega$,
then we conclude that for any $\ell,m \in \mathbb{N}$,
for any integer  $j \geq 1$, and for any $\omega$ such that $q_{j-1}(\omega) = m$, 
\begin{equation}\label{eq:checkcomparisonlemma}
\bbP(q_j \geq \ell | \caF_{j-1})(\omega) \leq \bbP \Big( \, \sum\limits_{i=1}^{M m} (\xi_i - 1)   \geq \ell \, \Big) =
\bbP \Big( \, \sum\limits_{i=1}^{M m} M (\xi_i - 1)   \geq  M \ell \, \Big) \leq
\bbP\Big( Z_{j} \geq  M \ell  \Big|  Z_{j-1} =  m M \Big).
\end{equation}
Now we can again apply our comparison 
Lemma \ref{lem:comparisonlemma}. In the notation of that lemma,
$(M_j^1)_{j \geq 0} = (q_j)_{j \geq 0}$ and $(M_j^2)_{j \geq 0} = \frac{1}{M} (Z_j)_{j \geq 0}$,
which is the Galton-Watson process rescaled by a constant $M$.
Note that from Lemma \ref{lem:stochdomgalton}, the Markov chain
$(M_j^2)_{j \geq 0}$ satisfies the Assumption 1 of our comparison Lemma.
Note also that from (\ref{eq:checkcomparisonlemma}), we have that the
Assumption 2 of our comparison Lemma is fulfilled.
Furthermore, since  $\frac{1}{M} (Z_j)_{j \in \mathbb{N}}$ has initial population $\frac{1}{M} Z_0 = q_0 = |K_V|$, 
we also have that the Assumption 3 of the comparison Lemma is fulfilled.
Then, the Conclusion (b) of the comparison Lemma gives that,
$$
\bbP\Big(  \sum_{i=0}^{N(.)-1} q_i \geq \ell \Big) 
\leq \bbP \Big(  W \geq  M \ell\Big).
$$
Thus, we find that 
\[
\bbP( |\hat A| \geq \ell) \leq \bbP\Big( (M+1) \sum_{i=0}^{N(.)-1} q_i \geq \ell \Big) 
\leq \bbP \Big( (M+1) W \geq  M \ell\Big).
\]
The proof is finished. 
\end{proof}

Galton-Watson processes where the expected offspring per individual is strictly 
less than one  have exponentially bounded total population. We can use this for 

\begin{proposition}
	\label{prop:exponential bound} Consider the sampling strategy described 
	before Proposition \ref{prop:invarSubset1}. 
	Assume that the random permutation has cycle length bounded by $\xi$ on the finite graph $(V,E)$, and 
	that $(|\Phi| - 1) (\bbE(\xi) -1) < 1$. 
	Then there exist constants 
	$C_0>0$ and $\kappa_0 > 0$, depending on $\xi$ and $|A|$ but not on $V$, so that  
	\begin{equation}
	\label{eq:prop1}
	\bbP_V(|\hat A| \geq \ell) \leq C_0 \exp(- \kappa_0 \,  \ell),
	\end{equation}
	where $\hat A$ has been defined in \eqref{eq:hat A}.
	In particular, 
	\begin{equation}
		\label{eq:prop2}
	\bbP_V(\text{A $\pi$-invariant, $\Phi$-compatible subset separating $A$ from $B$ 
	exits}) \geq 1 - C_0\e{- \kappa_0 \left[ \, d(\Phi(A),B) + |\Phi(A)| \, \right]}.
	\end{equation}
	where $d(A,B)$ is the graph distance between $A$ and $B$. 
\end{proposition}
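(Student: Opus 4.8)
The plan is to combine Proposition \ref{prop:size of invariant set}, which bounds $|\hat A|$ in terms of the total population $W$ of a Galton--Watson process, with the classical fact that a subcritical Galton--Watson process has exponentially decaying total progeny. First I would verify that the sampling strategy described before Proposition \ref{prop:invarSubset1} satisfies the hypotheses of Proposition \ref{prop:size of invariant set}: the discussion there establishes that $|K_{B_{i+1}}| \leq (|\Phi|-1) |D_i \setminus K_{B_i}|$ whenever $D_i \setminus K_{B_i} \neq \emptyset$, so we may take $M = |\Phi| - 1$, and $|K_V| = |\Phi(A)| \leq |\Phi| \, |A|$. Applying Proposition \ref{prop:size of invariant set} gives
\[
\bbP_V(|\hat A| \geq \ell) \leq \bbP\Big( W \geq \tfrac{M}{M+1} \ell \Big),
\]
where $W$ is the total population of a Galton--Watson process with offspring law $M(\xi - 1)$ and initial population $M |K_V| \leq M |\Phi| \, |A|$.

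The next step is to show that $W$ has an exponential tail. The expected offspring per individual is $M\,(\bbE(\xi) - 1) = (|\Phi|-1)(\bbE(\xi)-1) < 1$ by assumption, so the process is subcritical. For a subcritical Galton--Watson process started from a single individual, the total progeny $W_1$ satisfies $\bbE(s^{W_1}) < \infty$ for some $s > 1$ (equivalently $\bbP(W_1 \geq k) \leq C' \rho^k$ for some $\rho < 1$); this follows from the standard functional equation $\bbE(s^{W_1}) = s\, \varphi(\bbE(s^{W_1}))$ for the offspring generating function $\varphi$, together with the fact that $\varphi'(1) < 1$ makes the fixed-point equation solvable for $s$ slightly larger than $1$. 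One caveat to handle: the offspring distribution $M(\xi-1)$ is only defined once we know $\xi$ is finite a.s., and the exponential bound for $W_1$ requires $\varphi$ to be finite in a neighbourhood of $1$; since cycle-length-boundedness by $\xi$ is the hypothesis and $\bbE(\xi) < \infty$ is implicit in $(|\Phi|-1)(\bbE(\xi)-1) < 1$, this is fine, but strictly one wants a little exponential integrability of $\xi$ — in the applications $\xi$ has exponential tails (Theorem \ref{theo1}), so I would either assume that or note that $\bbE(\xi)<\infty$ plus subcriticality already yields a power-law-free geometric-type bound via a truncation argument. Since $W$ with initial population $m = M|K_V|$ is a sum of $m$ i.i.d.\ copies of $W_1$, we get $\bbP(W \geq t) \leq C_0 \e{-\kappa_0 t}$ with constants depending on $\xi$ and on $|A|$ (through $m$) but not on $V$; rescaling $t = \tfrac{M}{M+1}\ell$ yields \eqref{eq:prop1} after relabelling $\kappa_0$.

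For \eqref{eq:prop2}, I would invoke Proposition \ref{prop:invarSubset1} together with the crude deterministic implication \eqref{eq:observation}: if $|\hat A \setminus \Phi(A)| < d(\Phi(A), B)$ then $\hat A \cap B = \emptyset$. Hence
\[
\bbP_V(\hat A \cap B \neq \emptyset) \leq \bbP_V\big(|\hat A| \geq d(\Phi(A),B) + |\Phi(A)|\big) \leq C_0 \e{-\kappa_0\,[\,d(\Phi(A),B) + |\Phi(A)|\,]},
\]
using $|\hat A| = |\hat A \setminus \Phi(A)| + |\Phi(A)|$ since $\Phi(A) \subset \hat A$, and then \eqref{eq:prop1}. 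Proposition \ref{prop:invarSubset1} converts this into the stated lower bound on the probability that a separating invariant $\Phi$-compatible set exists. The main obstacle is the subcriticality-to-exponential-tail step for the Galton--Watson total progeny under only a first-moment hypothesis on $\xi$; everything else is bookkeeping and direct application of the preceding propositions.
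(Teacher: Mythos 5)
Your proof follows the same route as the paper's, which is quite terse: the paper simply notes that the sampling strategy satisfies $|K_{B_{i+1}}| \leq (|\Phi|-1)|D_i\setminus K_{B_i}|$ so that Proposition~\ref{prop:size of invariant set} applies with $M=|\Phi|-1$, states that the first claim ``follows from subcriticality of the Galton--Watson process,'' and then for the second claim uses precisely the contrapositive of~\eqref{eq:observation} with $|\hat A| = |\hat A\setminus \Phi(A)|+|\Phi(A)|$. You have filled in the subcriticality-to-exponential-tail step with the generating-function argument and the observation that $W$ with initial population $m$ is a sum of $m$ i.i.d.\ copies of the single-ancestor total progeny $W_1$, which is where the $|A|$-dependence of the constants enters. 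That is exactly what the paper intends.

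Your caveat about exponential integrability of $\xi$ is well taken and points at a genuine imprecision in the proposition as stated: subcriticality $(|\Phi|-1)(\bbE\xi-1)<1$ alone gives $\bbE W<\infty$ but \emph{not} exponential tail decay of $W$, since $\bbP(W\geq k)\geq\bbP(\text{offspring of the root}\geq k)$, so any heavy tail of $\xi$ propagates directly to $W$. In the paper's applications $\xi$ always has exponential moments (via Theorem~\ref{theo1}, and Propositions~\ref{prop: decay of boundary condition 1} and~\ref{prop:decay of boundary 2} explicitly build this into their hypotheses), so the issue is harmless in context, but strictly speaking the hypothesis here should include a finite exponential moment for $\xi$. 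One place where I would push back on your write-up: the suggestion that ``a truncation argument'' could recover an exponential bound from $\bbE\xi<\infty$ alone does not work, for the reason just given — the tail of $W$ is at least as heavy as the tail of the offspring law, so no amount of post-processing the Galton--Watson comparison yields exponential decay without first putting exponential tails on $\xi$.
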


\begin{proof}
Observe that our sampling strategy (\ref{eq:sampling strat 1}) fulfils
$| K_{B_{i+1}}| \leq ( | \Phi| - 1 )  |D_i \setminus K_{B_i} |$ almost surely,
so Proposition \ref{prop:size of invariant set} can be applied with $M= |\Phi| - 1$.
Thus, the first statement follows from subcriticality of the Galton-Watson process.
For the second statement, we use 
that by \eqref{eq:observation}, 
\[
\bbP(\hat A \cap B = \emptyset) \geq 1 - \bbP\left[ |\hat A \setminus \Phi(A)| 
\geq d(\Phi(A),B)\right] = 1 - \bbP\left[|\hat A| \geq d(\Phi(A),B) + |\Phi(A)|\right]
. \]
\end{proof}

We will be interested in cases where the set $A$ itself is large. Then 
the above estimate is too weak, since the constant $C$ there will depend on $A$ 
we have too little control on its growth with $|A|$. We therefore need 
the following refinement, which we state only in the context of comparing
expectation of local function for different boundary conditions. An adaptation to 
the existence of $\Phi$-compatible sets is straightforward. 

\begin{proposition}
	\label{prop: decay of boundary condition 1}	Let $(V,E)$ be a finite graph, and $B \subset U \subset V$. Assume that 
	a random permutation on $(V,E)$ has cycle length bounded by a random variable $\xi$ 
	with $\bbE(\xi) < 2$. 
    Let $C>0$ and $\kappa>0$ be two constants such that the total
    offspring $W$ of a Galton-Watson process with initial
    population $1$ and offspring distribution $\xi-1$ fulfils $\bbP(W > n) \leq C \e{-2 \kappa n}$. 	
    Then, for any $\caF_B$-measurable function $f$ we have   
	\[
	|\bbE_V(f) - \bbE_U(f)| \leq 2 C \| f \|_{\infty} \sum_{x \in V \setminus U} 
	\e{- \kappa d (x, B)}.
	\]		
	Above, $d$ is the graph distance on $G$. 
\end{proposition}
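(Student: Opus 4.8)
The plan is to reduce the statement to the single-seed exponential bounds of Section~\ref{sect:cyclegrowth}, summed over the vertices of $V\setminus U$. First I would apply the boundary-condition decay estimate \eqref{eq:boundary condition decay} in the disjoint union $\tilde G$ of two copies of $G$, with $\Phi=\{\mathrm{id},\phi\}$ the group generated by the natural symmetry $\phi$ swapping the two copies, and $A:=V_1\setminus U_1$. Since a cycle of any permutation on $\tilde G$ lies entirely in one copy and the measure factorises over the copies, the cycle-length bound $\xi$ from \eqref{eq:bounddistributionP} also holds on $\tilde G$ and on all of its subgraphs. Estimate \eqref{eq:boundary condition decay} then gives
\[
|\bbE_V(f)-\bbE_U(f)|\;\le\;2\|f\|_\infty\,\bbP_{\tilde V}\big(Q_A\cap B\neq\emptyset \mid \tilde\pi|_A=\mathrm{id}\big),
\]
where $Q_A=\hat A$ is the minimal $\phi$-compatible $\tilde\pi$-invariant set containing $A$, as produced by the sampling strategy \eqref{eq:sampling strat 1}--\eqref{eq:hat A}, and $B$ is viewed inside $V_1$.

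The key step is a union bound at the level of single seeds. Writing $Q_{\{x\}}$ for the minimal $\phi$-compatible $\tilde\pi$-invariant set containing $x$, one has the deterministic identity $Q_A=\bigcup_{x\in A}Q_{\{x\}}$: indeed $\bigcup_{x\in A} Q_{\{x\}}$ is $\phi$-compatible, $\tilde\pi$-invariant and contains $A$, hence it contains $Q_A$, while each $Q_{\{x\}}\subseteq Q_A$ by minimality. Therefore
\[
\bbP_{\tilde V}\big(Q_A\cap B\neq\emptyset \mid \tilde\pi|_A=\mathrm{id}\big)\;\le\;\sum_{x\in A}\bbP_{\tilde V}\big(Q_{\{x\}}\cap B\neq\emptyset \mid \tilde\pi|_A=\mathrm{id}\big),
\]
and, identifying $A$ with $V\setminus U$ (the graph distance on the $V_1$-copy coinciding with $d$ on $G$, and $d(\{x,\phi(x)\},B)=d(x,B)$ because $\phi(x)$ lies in the other component), it suffices to show each summand is at most $C\,\e{-\kappa\,d(x,B)}$.

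For the single-seed estimate I would fix $x\in A$ and note that, by the spatial Markov property, conditioning on $\tilde\pi|_A=\mathrm{id}$ freezes every vertex of $A$ to a fixed point and otherwise leaves a $\bbP_{\tilde V\setminus A}$-distributed permutation, on which the cycle-length bound $\xi$ still holds. In particular the orbit of $x$ is the trivial cycle $\{x\}$, so the symmetric growth started from $\Phi(\{x\})=\{x,\phi(x)\}$ is driven only by the seed $\phi(x)$: its cycle has at most $\xi$ vertices and spawns at most $\xi-1$ further seeds, each of which behaves likewise. Running the argument of Proposition~\ref{prop:size of invariant set} with $M=|\Phi|-1=1$ (a dead root together with one effective initial individual, namely the cycle of $\phi(x)$), one obtains $|Q_{\{x\}}|\le 2W$, where $W$ is the total population of a Galton--Watson process with offspring distribution $\xi-1$ and initial population $1$. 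On the other hand, the crude geometric observation \eqref{eq:observation} applied with $A=\{x\}$ gives $\{Q_{\{x\}}\cap B\neq\emptyset\}\subseteq\{|Q_{\{x\}}|\ge d(x,B)+2\}$. Combining these with the hypothesis $\bbP(W>n)\le C\,\e{-2\kappa n}$,
\[
\bbP\big(Q_{\{x\}}\cap B\neq\emptyset \mid \tilde\pi|_A=\mathrm{id}\big)\;\le\;\bbP\big(2W\ge d(x,B)+2\big)\;\le\;\bbP\big(W> d(x,B)/2\big)\;\le\;C\,\e{-\kappa\,d(x,B)},
\]
and summing over $x\in V\setminus U$ and inserting into the first display would give the claimed bound.

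The main obstacle is the uniformity needed to make the union bound useful: Proposition~\ref{prop:exponential bound} applied directly to the large set $A=V\setminus U$ is worthless here because its constant $C_0$ depends on $|A|$ with no good control, so one is forced to decompose $Q_A$ into single-seed pieces and estimate each with constants depending only on $\xi$. The second delicate point is the bookkeeping of the powers of $2$: the factor $\tfrac{M}{M+1}=\tfrac12$ coming from Proposition~\ref{prop:size of invariant set} and the shift $d(x,B)\mapsto d(x,B)+2$ coming from \eqref{eq:observation} must be matched against the calibration $\bbP(W>n)\le C\,\e{-2\kappa n}$ in the hypothesis, and this closes exactly because conditioning on $\tilde\pi|_A=\mathrm{id}$ kills the $x$-seed and thus keeps the dominating Galton--Watson process at initial population $1$ rather than $2$.
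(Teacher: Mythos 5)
Your argument is essentially the paper's: you use the doubling trick to reduce to \eqref{eq:boundary condition decay}, decompose $Q_A=\bigcup_{x\in A}Q_{\{x\}}$ and apply a union bound over seeds, and then bound each single-seed term by a subcritical Galton--Watson total population via Proposition~\ref{prop:size of invariant set} with $M=1$ and the geometric observation \eqref{eq:observation}. The only cosmetic difference is the bookkeeping of the dead $x$-seed: the paper passes to $\tilde V_0=U_1\cup V_2$ and replaces $\phi$ by the modified symmetry $\phi_0$ (identity outside $U_1\cup U_2$), whereas you stay in $\tilde V$ and condition on $\tilde\pi|_A=\mathrm{id}$ to force the $x$-seed to be a fixed point --- the two routes achieve the same effective initial population of one for the dominating Galton--Watson process.
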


\begin{proof}
Let $V_1$ and $V_2$ be two disjoint copies of $V$ and put 
$\tilde V = V_1 \cup V_2$, $\tilde V_0 = U_1 \cup V_2$ where $U_1$ is $U$ regarded as 
a subset of $V_1$, and $A = \tilde V \setminus \tilde V_0$. 
Let $\phi$ be the 
natural graph isomorphism taking $V_1$ to $V_2$. For $\tilde {\pi} \in S_{\tilde V}$, 
let $Q_A(\boldsymbol{\pi})$  be the minimal $\boldsymbol{\pi}$-invariant, 
$\phi$-compatible set containing $A$. In equation \eqref{eq:boundary condition decay}
we have seen that for $B \subset U$ and $\caF_B$-measurable $f$, 
\begin{equation}
	\label{eq:proof1}
|\bbE_V(f) - \bbE_U(f)| \leq 2 \| f \|_{\infty} \bbP_{\tilde V}
\Big( Q_A \cap B \neq \emptyset \Big|  \boldsymbol{\pi}|_A = {\rm id} \Big).
\end{equation}
On the right hand side above, we interpret $B$ as a subset of $U_1 \subset V_1$. 
See also Figure \ref{Fig:decayboundary}.

For estimating the right hand side, we first note that when $A,A' \subset \tilde V$, then 
\[
Q_{A \cup A'} = Q_A \cup Q_{A'}.
\] 
Indeed, the implication $\supset$ holds since $A \subset B$ implies $Q_A(\boldsymbol{\pi}) \subset Q_B(\boldsymbol{\pi})$
for any $\boldsymbol{\pi}$.
For the reverse implication, note that $Q_A(\boldsymbol{\pi}) \cup 
Q_{A'}(\boldsymbol{\pi})$ is $\boldsymbol{\pi}$-invariant and contains $A \cup A'$,
and since $\phi$ is bijective we also have $\phi(Q_A \cup Q_{A'}) = \phi(Q_A) \cup 
\phi(Q_{A'}) = Q_A \cup Q_{A'}$. So, $Q_A \cup Q_{A'} \supset Q_{A \cup A'}$ by the 
minimality of the latter set. We conclude
\begin{equation}
\label{eq:proof2}	
\bbP_{\tilde V} \Big( Q_A \cap B \neq \emptyset \Big|  \boldsymbol{\pi}|_A = {\rm id} \Big)
 = \bbP_{\tilde V} \Big( \bigcup_{x \in A} \{ Q_{\{x\}} \cap B \} \neq \emptyset \Big| 
 \boldsymbol{\pi}|_A = {\rm id} \Big) \leq \sum_{x \in A} \bbP_{\tilde V} 
 \Big( Q_{\{x\}} \cap B \neq \emptyset \Big| \boldsymbol{\pi}|_A = {\rm id} \Big).
\end{equation}
For estimating the latter probabilities, define $\phi_0(y)$ so that $\phi_0(y)=\phi(y)$ for $y \in U_1 \cup U_2$, and $\phi_0(y)=y$ if $y \in U_2^c$. Here, $U_2$ denotes $U$ regarded as a subset of $V_2$. 
Let $Q_{\{x\}}^0$ be the minimal $\boldsymbol{\pi}$-invariant 
set containing $x$ so that $\phi_0(Q_{\{x\}}^0(\boldsymbol{\boldsymbol{\pi}}) ) = Q_{\{x\}}^0 (\boldsymbol{\pi})$.
First note that
$\bbP_{\tilde V}(\boldsymbol{\pi}|_A = {\rm id} ) = Z( \tilde V_0 ) / Z(\tilde V)$, and thus 
\[
\bbP_{\tilde V} \Big( Q_{\{x\}} \cap B \neq \emptyset \Big| \boldsymbol{\pi}|_A = {\rm id} \Big)
= \bbP_{\tilde V_0} (Q_{\{x\}}^0 \cap B \neq \emptyset).
\]
The relevant sampling strategy is then given by 
\[
K_{\tilde V_0} = \{x\}, \quad K_B = B \cap \phi_0(B^c)  \text{ if }  B \cap \phi_0(B^c) 
\neq \emptyset, \quad K_B = B \text{ otherwise.}
\]
As before, we let $N = \inf \{j \in \bbN: K_{B_j} = B_j \}$ and $\hat A = B_N^c$. 
Then $\hat A(\boldsymbol{\pi})$ is $\phi_0$-compatible and $\boldsymbol{\pi}$-invariant,
so $Q_{\{x\}}^0(\boldsymbol{\pi}) \subset \hat A(\boldsymbol{\pi})$ for all 
$\boldsymbol{\pi}$. The sampling strategy fulfils the assumptions of Proposition 
\ref{prop:size of invariant set} with $M=1$, and therefore 
\[
\bbP_{\tilde V_0}(Q_{\{x\}}^0 \cap B \neq \emptyset) \leq \bbP_{\tilde V_0} 
( | \hat A | \geq d(x,B) + 1) \leq C \e{-\kappa (d(x,B) + 1)}.
\]
$C$ and $\kappa$ are the constants defined in the statement of the proposition. 
Inserting this estimate in \eqref{eq:proof2}
and the result into \eqref{eq:proof1} shows the claim. 
\end{proof}

Consider a  vertex transitive graph $G = (V,E)$ that might be finite or infinite.
Theorem \ref{theo1} guarantees that there exists $\alpha_0' < \infty$ (which
may be significantly larger than the $\alpha_0$ given in that theorem) such
that for all $\alpha > \alpha'_0$, the corresponding random permutation has 
cycle length bounded by a random variable $\xi$ that
has finite exponential moments and has $\bbE(\xi) < 2$. Proposition 
\ref{prop: decay of boundary condition 1} then gives a bound on
$|\bbP_{V_1}(\pi|_A = {\rm id}) - \bbP_{V_0}(\pi|_A = {\rm id})|$ 
for finite subsets 
$A \subset V_1 \subset V_0$ of $V$. However, this bound becomes meaningless when 
$A$ is allowed to become large, since both of the involved probabilities then
become very small. The following proposition improves the estimate of 
Proposition \ref{prop: decay of boundary condition 1} to the strength that 
will be needed in the next section. We formulate it in terms of partition 
functions.

\begin{proposition}
	\label{prop:decay of boundary 2}
	Let $(V,E)$ be finite or infinite.
	Consider finite subsets $A \subset V_1 \subset V_0$ of $V$. Let $\alpha_0'$
	be large enough so that for all $\alpha > \alpha_0'$, the random permutation
	has cycle length bounded by a random variable $\xi$ with $E( \, \xi \, )<2$.
    Let $C>0$ and $\kappa>0$ be two constants such that the total
    offspring $W$ of a Galton-Watson process with initial
    population $1$ and offspring distribution $\xi-1$ fulfils $\bbP(W > n) \leq C \e{-2 \kappa n}$. 
    Let $\hyperlink{c_1}{c_1(\alpha)}$ be defined as in 
	\eqref{eq:c1}. Define 
	\begin{equation}
	\label{eq:D}	
	D = D(A,V_0,V_1,\alpha,\xi) = \exp \Big( \frac{ 2 C}{\hyperlink{c_1}{c_1(\alpha)}} 
	\sum_{x \in A} \sum_{y \in V_0 \setminus V_1} 
	\e{- \kappa d(x,y)} \Big),		
	\end{equation}	 	
	where $d(x,y)$ denotes the graph distance between $x$ and $y$. 
	Then 
	\[
	\frac{1}{D} \frac{Z(V_1 \setminus A)}{Z(V_1)} \leq
	\frac{Z(V_0 \setminus A)}{Z(V_0)} \leq 
	D \frac{Z(V_1 \setminus A)}{Z(V_1)}.
	\]		
\end{proposition}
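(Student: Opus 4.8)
The plan is to write each of $Z(V_1\setminus A)/Z(V_1)$ and $Z(V_0\setminus A)/Z(V_0)$ as a product of one‑site probabilities indexed by the points of $A$, compare the corresponding factors one at a time by means of Proposition \ref{prop: decay of boundary condition 1}, and then upgrade these additive estimates to a multiplicative one using the uniform lower bound of Proposition \ref{prop:lowerboundpart}.

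Enumerate $A=\{x^1,\dots,x^m\}$, and for $U\in\{V_0,V_1\}$ put $U_j:=U\setminus\{x^1,\dots,x^{j-1}\}$, so that $U_1=U$ and $U_{m+1}=U\setminus A$. Then the telescoping identity
\[
\frac{Z(U\setminus A)}{Z(U)}=\prod_{j=1}^m\frac{Z(U_{j+1})}{Z(U_j)}=\prod_{j=1}^m\bbP_{U_j}\big(\pi(x^j)=x^j\big)
\]
holds, the last equality being Proposition \ref{prop:Markovspatial}(i) applied with the invariant set $U_{j+1}$ and its complement $\{x^j\}$ (using $Z(\{x^j\})=1$ and the fact that $\{x^j\}$ is $\pi$‑invariant iff $\pi(x^j)=x^j$). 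Writing $p_j^{(0)}:=\bbP_{V_0\setminus\{x^1,\dots,x^{j-1}\}}(\pi(x^j)=x^j)$ and $p_j^{(1)}:=\bbP_{V_1\setminus\{x^1,\dots,x^{j-1}\}}(\pi(x^j)=x^j)$, the quantity to be controlled is exactly $\prod_{j=1}^m p_j^{(0)}/p_j^{(1)}$.

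For the per‑site comparison, fix $j$. Since $A\subset V_1\subset V_0$ and the $x^i$ are distinct, $\{x^j\}\subset V_1\setminus\{x^1,\dots,x^{j-1}\}\subset V_0\setminus\{x^1,\dots,x^{j-1}\}$, and the set‑difference of the two larger sets is exactly $V_0\setminus V_1$ (being disjoint from $\{x^1,\dots,x^{j-1}\}\subset A\subset V_1$). The indicator of $\{\pi(x^j)=x^j\}$ is $\caF_{\{x^j\}}$‑measurable, the cycle‑length bound by $\xi$ passes from $(V,E)$ to the subgraph $V_0\setminus\{x^1,\dots,x^{j-1}\}$, and the distance $d$ of the statement is a lower bound for the intrinsic distance of that subgraph; hence Proposition \ref{prop: decay of boundary condition 1} gives
\[
\big|p_j^{(0)}-p_j^{(1)}\big|\;\leq\;2C\sum_{y\in V_0\setminus V_1}\e{-\kappa\,d(x^j,y)}.
\]
Proposition \ref{prop:lowerboundpart} gives in addition $p_j^{(0)}\geq c_1(\alpha)$ and $p_j^{(1)}\geq c_1(\alpha)$, so
\[
\frac{p_j^{(0)}}{p_j^{(1)}}\;\leq\;1+\frac{2C}{c_1(\alpha)}\sum_{y\in V_0\setminus V_1}\e{-\kappa\,d(x^j,y)}\;\leq\;\exp\!\Big(\frac{2C}{c_1(\alpha)}\sum_{y\in V_0\setminus V_1}\e{-\kappa\,d(x^j,y)}\Big),
\]
and symmetrically for $p_j^{(1)}/p_j^{(0)}$.

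Taking the product over $j=1,\dots,m$ and comparing with the definition \eqref{eq:D} of $D$ yields $\prod_j p_j^{(0)}/p_j^{(1)}\leq D$ and $\prod_j p_j^{(1)}/p_j^{(0)}\leq D$, which is precisely the claimed two‑sided inequality. The only point needing a little care is the bookkeeping: verifying at each step that Proposition \ref{prop: decay of boundary condition 1} applies to the pair $\big(V_1\setminus\{x^1,\dots,x^{j-1}\},\,V_0\setminus\{x^1,\dots,x^{j-1}\}\big)$ — all its hypotheses ($\bbE(\xi)<2$, the Galton–Watson tail with constants $C,\kappa$, the inclusion of the local set $\{x^j\}$) are inherited from those on $(V,E)$ — and that passing to the ambient distance $d$ only enlarges the bound. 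There is no genuine obstacle beyond this; the substance of the argument is entirely contained in Propositions \ref{prop: decay of boundary condition 1} and \ref{prop:lowerboundpart}.
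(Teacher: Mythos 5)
Your proof is correct and follows essentially the same route as the paper's: telescoping the ratio over one-point removals, identifying each factor as the one-site probability $\bbP(\pi(x^j)=x^j)$, comparing the two volumes factorwise via Proposition \ref{prop: decay of boundary condition 1}, and using Proposition \ref{prop:lowerboundpart} for the lower bound $c_1(\alpha)$ on each factor. The only cosmetic difference is that the paper passes to logarithms and uses $\log(1+|t|)\leq|t|$ where you multiply the ratios directly and use $1+t\leq\e{t}$; these are the same estimate.
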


\begin{proof}
Let us put $n = |A|$, and write 
$$
A = \{x^1, x^2, \ldots x^{n} \}, \quad \text{ and } \quad  
A_i = \{ x^i, x^{i+1}, \ldots, x^{n}\} \subset A$$
for $i \leq |A|$. We set $A_{n+1}=\emptyset$. Then, for $j = 0,1$, 
\[
\frac{Z(V_j \setminus A)}{Z(V_j)} = \prod_{i=1}^n \frac{Z(V_j \setminus A_i)}
{Z(V_j \setminus A_{i+1})},
\]
and we have 
\[
\log \frac{Z(V_1 \setminus A)}{Z(V_1)} = 
\sum_{i=1}^n \log \frac{Z(V_1 \setminus A_i)}{Z(V_1 \setminus A_{i+1})} 
= \sum_{i=1}^n \log \frac{Z(V_0 \setminus A_i)}{Z(V_0 \setminus A_{i+1})} 
+ \sum_{i=1}^n \log \Big( \frac{Z(V_1 \setminus A_i)}{Z(V_1 \setminus A_{i+1})}
\frac{Z(V_0 \setminus A_{i+1})}{Z(V_0 \setminus A_i)} \Big).
\]
The first sum on the right hand side above is equal to 
$\log \frac{Z(V_0 \setminus A)}{Z(V_0)}$. We use the inequality 
$\log (1 + |x|) \leq |x|$ on each term in the final sum above, and find 
\[
\Big |  \log \frac{Z(V_1 \setminus A)}{Z(V_1)} 
- \log \frac{Z(V_0 \setminus A)}{Z(V_0)} \Big |  \, \leq \, 
\sum_{i=1}^n   \Big | \, \, \frac{Z(V_1 \setminus A_i)}{Z(V_1 \setminus A_{i+1})} 
- \frac{Z(V_0 \setminus A_i)}{Z(V_0 \setminus A_{i+1})} \Big |   \, \, \,
\frac{Z(V_0 \setminus A_{i+1})}{Z(V_0 \setminus A_{i})}.
\]
Now, by applying Proposition \ref{prop:lowerboundpart} with 
$U = V_0 \setminus A_{i+1}$ and $A = \{ x^i \}$, we get 
\[
\Big | \frac{Z(V_0 \setminus A_{i+1})}{Z(V_0 \setminus A_{i})} \Big| \leq 
\frac{1} {\hyperlink{c_1}{c_1(\alpha)}}.
\]
%
%
%
We now apply Proposition \ref{prop: decay of boundary condition 1} with $f = \mathbbm{1}\{\pi(x^i)=x^i\}$
and we obtain that,
\begin{equation}
\label{eq:theIneq}	
\Big| \frac{Z(V_1 \setminus A_i)}{Z(V_1 \setminus A_{i+1})} - 
\frac{Z(V_0 \setminus A_i)}{Z(V_0 \setminus A_{i+1})} \Big| 
= | \bbP_{V_1 \setminus A_{i+1}}(\pi(x^i) = x^i) - 
\bbP_{V_0 \setminus A_{i+1}}(\pi(x^i) = x^i) |
\leq \sum_{y \in V_0 \setminus V_1} 2 C \e{-\kappa d(y,x^i)}.
\end{equation}
We combine these estimates and obtain
\[
\Big |   \, \log \frac{Z(V_1 \setminus A)}{Z(V_1)} - \log \frac{Z(V_0 \setminus A)}
{Z(V_0)} \Big  | \,  \leq \frac{ 2 \, C}{\hyperlink{c_1}{c_1(\alpha)}} 
\sum_{x \in A} \sum_{y \in V_0 \setminus V_1} \e{-\kappa d(x,y)}.
\]
Exponentiating this inequality gives $Z(V_1 \setminus A)/Z(V_1) 
\leq D Z(V_0 \setminus A)/Z(V_0)$ and thus the first claimed inequality. 
For the second one, notice that the only place where we used $V_1 \subset V_0$
is inequality \eqref{eq:theIneq}. That estimate 
still holds when interchanging the roles of $V_0$ and $V_1$, and when 
we do this, we get the remaining inequality. 
\end{proof}

We will only use the following weaker form of the previous proposition.
\begin{corollary}
	\label{cor:decay3}
	In the situation of Proposition \ref{prop:decay of boundary 2}, write 
	$B = V_0 \setminus V_1$. We then have
	\[
	\exp \Big( - \frac{ 2 C}{\hyperlink{c_1}{c_1(\alpha)}} |A| |B| \e{-\kappa d(A,B)} \Big)
	\frac{Z(V_1 \setminus A)}{Z(V_1)} \leq \frac{Z(V_0 \setminus A)}{Z(V_0)}
	\leq \exp \Big(  \frac{2 C}{\hyperlink{c_1}{c_1(\alpha)}} |A| |B| \e{-\kappa d(A,B)} \Big)
	\frac{Z(V_1 \setminus A)}{Z(V_1)}.
	\]
\end{corollary}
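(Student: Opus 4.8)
The corollary is an immediate weakening of Proposition \ref{prop:decay of boundary 2}, obtained by replacing the refined double sum in the definition \eqref{eq:D} of $D$ by a cruder product bound. The plan is as follows.

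First I would recall that in Proposition \ref{prop:decay of boundary 2} the quantity controlling the discrepancy of the two partition function ratios is
\[
D = \exp \Big( \frac{2C}{\hyperlink{c_1}{c_1(\alpha)}} \sum_{x \in A} \sum_{y \in V_0 \setminus V_1} \e{-\kappa d(x,y)} \Big),
\]
and that $\tfrac{1}{D} \tfrac{Z(V_1 \setminus A)}{Z(V_1)} \leq \tfrac{Z(V_0 \setminus A)}{Z(V_0)} \leq D \tfrac{Z(V_1 \setminus A)}{Z(V_1)}$. All that is needed is therefore an upper bound on the exponent. By the definition of the graph distance between the sets $A$ and $B = V_0 \setminus V_1$, we have $d(x,y) \geq d(A,B)$ for every $x \in A$ and every $y \in B$, hence $\e{-\kappa d(x,y)} \leq \e{-\kappa d(A,B)}$ for each such pair. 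Summing over the $|A|$ choices of $x$ and the $|B|$ choices of $y$ yields
\[
\sum_{x \in A} \sum_{y \in V_0 \setminus V_1} \e{-\kappa d(x,y)} \leq |A|\,|B|\, \e{-\kappa d(A,B)}.
\]

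Consequently $D \leq \exp\big( \tfrac{2C}{\hyperlink{c_1}{c_1(\alpha)}} |A|\,|B|\,\e{-\kappa d(A,B)} \big)$ and likewise $\tfrac{1}{D} \geq \exp\big( -\tfrac{2C}{\hyperlink{c_1}{c_1(\alpha)}} |A|\,|B|\,\e{-\kappa d(A,B)} \big)$. Inserting these two inequalities into the two-sided bound of Proposition \ref{prop:decay of boundary 2} gives exactly the claimed inequalities, completing the proof. There is no substantive obstacle here: the only point to be careful about is that $d(A,B)$ denotes the set-to-set graph distance (the minimum over all pairs), which is what makes the pointwise bound $d(x,y) \geq d(A,B)$ valid.
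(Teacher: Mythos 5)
Your proof is correct and is exactly the argument the paper leaves implicit (no proof is given for this corollary, as it is presented as an immediate weakening of Proposition \ref{prop:decay of boundary 2}). The key pointwise bound $d(x,y)\geq d(A,B)$, together with the crude count of $|A|\,|B|$ terms in the double sum, is the intended route.
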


\rhead{ \large{ \textit{5 \hspace{0.7cm} PROOF OF THEOREM 2.2}}}
\section{Proof of Theorem \ref{theo2}}
\label{sect:proofoftheo2}

Recall that for $y = (y_1, \ldots, y_d) \in \bbZ^d$ we write $\fc y = y_1$
and $\sc y = (y_2, \ldots, y_d)$. In the proof we will need the following notions.
For $y \in \Lambda_n$, we write 
\[
\caC_{y} := \{x \in \Lambda_n: \fc x - \fc y \geq | \sc x - \sc y | \} 
\cup \{ x \in \Lambda_n: \fc x \geq \fc y + \log n \}
\]
for the forward cone starting in $y$ that is broadened to full width after 
a logarithmic length (see also Figure \ref{Fig:regstructures-a}). A set $A \subset \Lambda_n$ will be called 
\[ 
\begin{split}
& \text{ weakly $y$-admissible if } 
\{ x \in \Lambda_n: \fc x \geq \fc y + \log n \} \subset A, \\
&\text{ $y$-admissible if in addition }
\{ x \in \Lambda_n: \fc x \geq \fc y, \sc x = \sc y \} \subset A,
\\ 
&\text{ and  strictly $y$-admissible if in addition } 
\caC_y \subset A.
\end{split}
\]
We write $\caA_y^w$, 
$\caA_y$ and $\caA_y^s$ for the set of weakly admissible, admissible and 
strictly admissible sets, respectively.
For $y,z \in \Lambda_n$, $A \in \caA_y$ and 
$\pi \in \caS_A^{y \to z}$ 
recall that $\gamma(\pi) = \Orb_\pi(\{y\})$ denotes the trace of the 
self avoiding walk that is embedded in $\pi$, 
starting from $y$ and ending in $z$. We order the elements 
of $\gamma(\pi)$ by order of their appearance in $\Orb_\pi(\{y\})$, i.e.\ 
$x \leq y$ if $y \in \Orb_\pi(\{x\})$. Together with this order, the set 
$\gamma(\pi)$ uniquely characterises a self-avoiding path from $y$ to 
an element of $z$, and we will sometimes write $\caL_A^{y \to z}$ 
for the set of all ordered subsets of $A$ such that their order makes them 
a self-avoiding nearest neighbour walk in $A$ starting in $y$ and ending in $z$. 
As for permutations, we define $\caL_A^{y \to \ell_n} = \bigcup_{z \in \ell_n} 
\caL_A^{y,z}$.

Given $\gamma \in \caL_A^{y \to \ell_n}$, a point 
$x \in \gamma$ will be called {\em pre-regeneration point} of $\gamma$ if 
\[
z \in \caC_x \text{ for all } z \geq x, \qquad \text{and } 
\fc z < \fc x \text{ for }  z < x.\
\]
In other words, the self avoiding path hits the set $\ell_{\fc x}$ precisely 
at $x$ and stays in $\caC_x$ thereafter (see also Figure \ref{Fig:regstructures-b}. Note that the latter requirement is more 
than what is usually required for regeneration points of self-avoiding walks. We 
will need it below in order to decouple the future of $\gamma$ from part of 
the background consisting of loops in $\pi$. 

\begin{figure}
\centering
 \centering
    \begin{subfigure}{0.3\textwidth}
\includegraphics[scale=0.35]{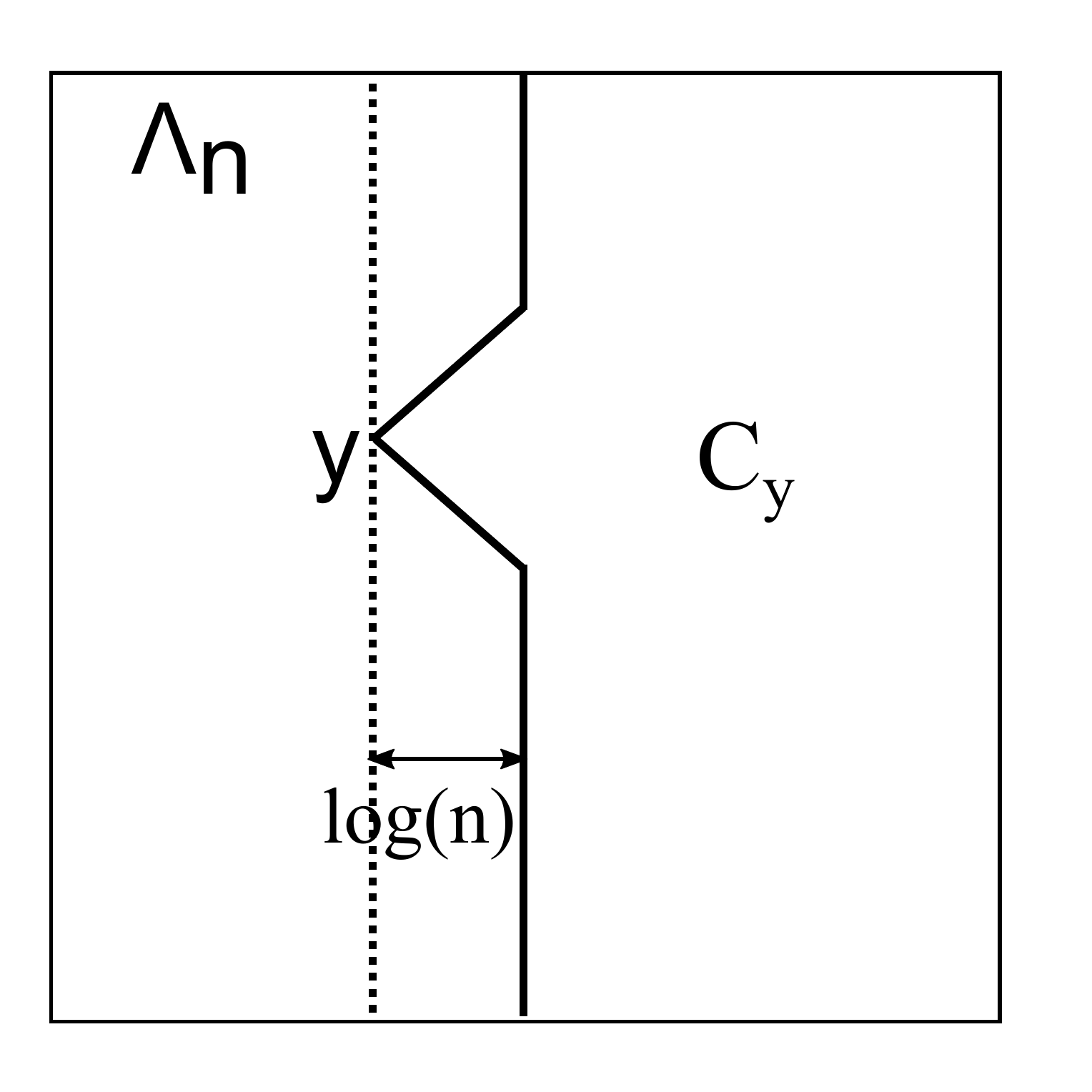}
\caption{}
\label{Fig:regstructures-a}
\end{subfigure}
\begin{subfigure}{0.3\textwidth}
\includegraphics[scale=0.35]{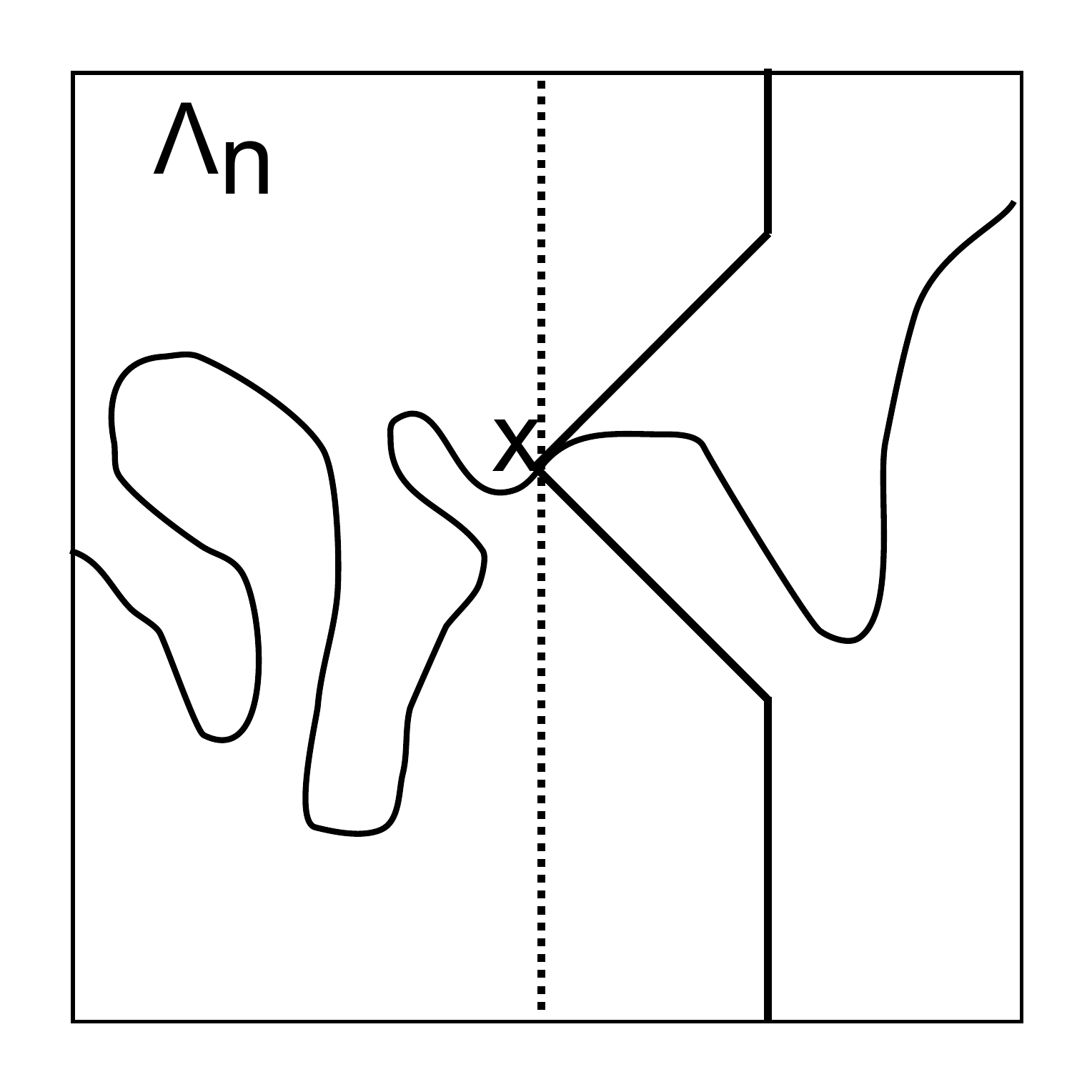}
\caption{}
\label{Fig:regstructures-b}
\end{subfigure}
\begin{subfigure}{0.3\textwidth}
\includegraphics[scale=0.33]{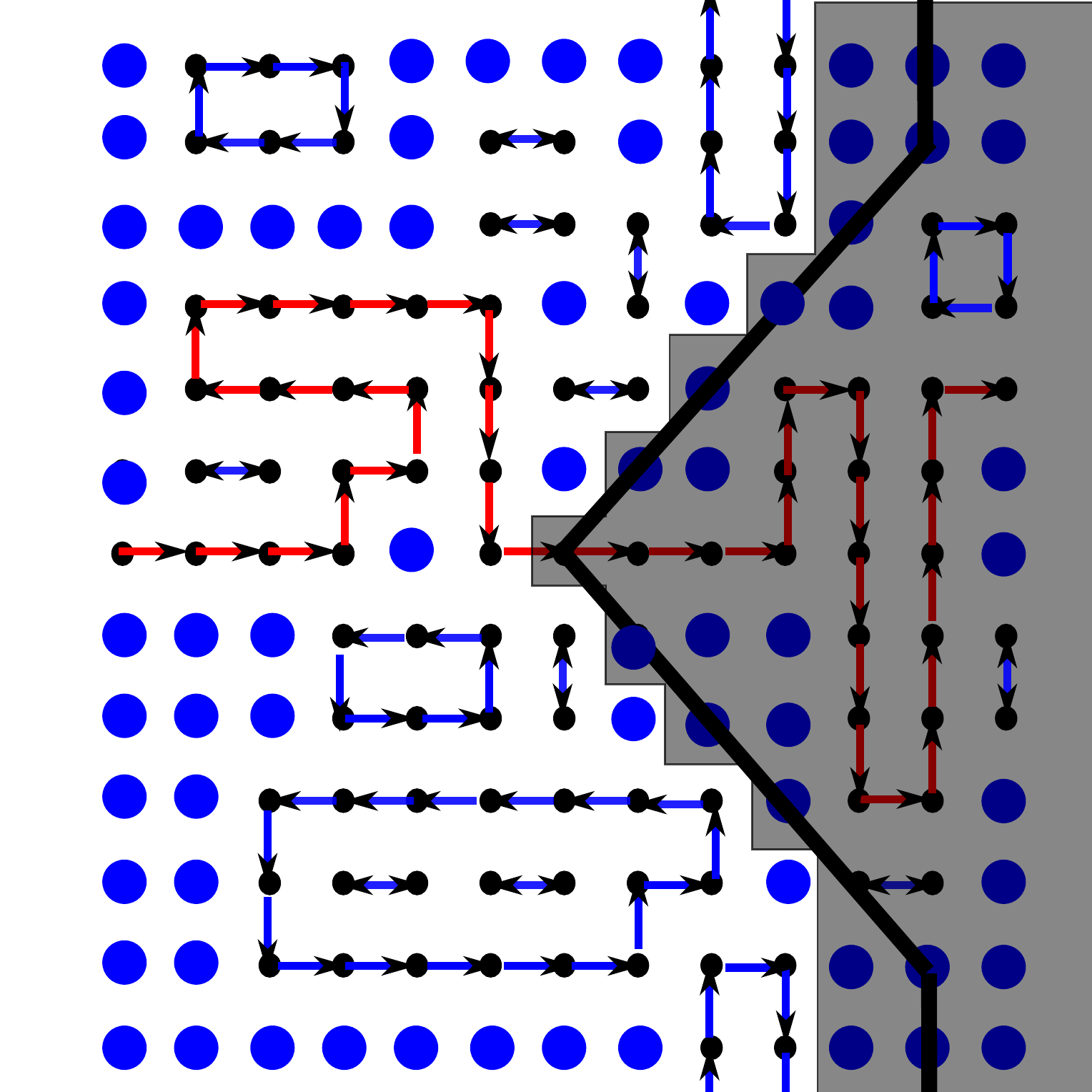}
\caption{}
\label{Fig:SAW-a}
\end{subfigure}
\caption{\textbf{(a)} A forward cone $C_y$. \textbf{(b)} A pre-regeneration point $x$. \textbf{(c)} The dark set represents the regeneration set of $\pi$ through $x$, where $x$ is the vertex in the centre of the figure.}
\label{Fig:regstructures}
\end{figure}

Let $\pi \in \caS_A^{y \to \ell_n}$, and let $\pi_0$ be the element
of $\caS_A$ with $\pi_0(x) = x$ whenever $x \in \gamma(\pi)$, and 
$\pi_0(x) = \pi(x)$ otherwise. 
A pre-regeneration point 
$x \in \gamma(\pi)$ will be called {\em regeneration point} if there exists 
a subset $R \subset A$ with the following four properties: 
\begin{itemize}
\item[(R1)] $R \subset \{z \in A: \fc z \geq \fc x \}$. 
\item[(R2)] $R$ is strictly $x$-admissible.
\item[(R3)] $R$ is invariant under reflections leaving 
$\{z \in A: \sc z = \sc x\}$ invariant. In other words, 
$(\fc z, \sc x - \sc z) \in R$ whenever $z \in R$. 
\item[(R4)] $R$ is invariant under $\pi_0$. 
\end{itemize}
If $x$ is a regeneration point of $\pi$, the largest set with properties
(R1)-(R4) is called {\em regeneration set} of $\pi$ through $x$, and is denoted 
by $R(x,\pi)$. See also Figure \ref{Fig:SAW-a}.
Since (R1)-(R4) are invariant under unions of sets, 
the existence of any regeneration set implies the existence and uniqueness of 
$R(x,\pi)$. By convention, the point $0 \in \Lambda_n$ will be a regeneration
point with $A(0,\pi) = \Lambda_n$ for all $\pi$, 
even though $0$ might not be a pre-regeneration point of $\gamma(\pi)$. 
Also, $z = \max \{ x: x \in \gamma(\pi) \}$ is a regeneration point
by convention, with empty regeneration set. The set of all regeneration points
of some $\pi \in \caS_A^{y \to \ell_n}$ is denoted by $\caR(\pi)$ and is 
non-empty by the above conventions. We order its elements 
by the order in which they appear in $\gamma$.

We now define for each 
$\pi \in \caS_{\Lambda_n}^{0 \to \ell_n}$ the following sequence of
pairs $(x_i,R_i) \subset \Lambda_n \times \caP(\Lambda_n)$: We set $X_0 = 0$ 
and $R_0 = \Lambda_n$, and then recursively define 
\[
X_{i+1}(\pi) = \begin{cases}
	\min \{y \in \caR(\pi): \fc y \geq \fc x + \log n, \} & \text{ if } 
	\fc X_i(\pi) + \log n \leq n,\\
	\max \{x: x \in \gamma(\pi) \} & \text{otherwise.}
\end{cases}
\]
In words, we pick a sequence of regeneration points that have mutual 
horizontal distance of at least $\log n$, when in this way we find a regeneration
point with horizontal distance less than $\log n$ to $\ell_n$, we jump to 
$\max \gamma$ and stay there forever. In addition, we define 
$R_i(\pi) = R(X_i(\pi), \pi)$. 

We claim that the sequence $(X_i, R(X_i))$ is a $\Lambda_n \times 
\caP(\Lambda_n)$-valued Markov chain, with transition matrix 
\[
p \big( (0,\Lambda_n),(x', R')\big) = \bbP_{\Lambda_n}^{0 \to \ell_n} 
(X_1 = x', R_1 = R'), 
\]
$p(\big (x,R),(y,\emptyset)) = \bbP_R^{x \to \ell_n}(y = \max \gamma)$ 
if $\fc x + \log n \geq n$, and 
\begin{equation}\label{MC transition matrix} 
\begin{split}
p\big( (x,R), (x',R') \big) & = \bbP_R^{x \to \ell_n} \big(
X_1 = x', R_1 = R' \big| \gamma \subset \caC_x \big)  \\ 
& =  \bbP_R^{x \to \ell_n}  
\big(x'=\min \{ y \in \caR: \fc y \geq \fc x + \log n\}, R' = R(x') \big| 
\gamma \subset \caC_x \big)
\end{split}
\end{equation} 
in all other cases. 
This will follow from a variant of the spatial Markov
property that we are now going to state. 

For any set $A \subset V = \Lambda_n$, we define $\caF_A$ to be the 
$\sigma$-algebra over $\caS_V^{0 \to \ell_n}$ generated by the forward 
evaluations $\pi \mapsto \pi(z)$ for $z \in A$. Note that in contrast to 
the situation in Proposition \ref{prop:Markovspatial}, we do not consider 
inverse images since these may not be defined in the situation with an 
open cycle. For any $\pi \in S^{0 \rightarrow \ell_n}_V$, we define 
the family of $\pi$-almost invariant sets
\[
{\rm Inv}_0(\pi) := \big \{A \subset V: \pi(A) \subset A \}.
\]
Note that by this definition, the open cycle $\gamma$ can enter an 
almost-invariant set $A$ but not leave it, and if 
$A \cap \gamma \neq \emptyset$ then the image of $A$ is 
$A$ without the unique entrance point of $\gamma$ into $A$. 
The reader should be warned that in the current situation, $A \in 
{\rm Inv}_0(\pi)$ does not imply $A^c \in {\rm Inv}_0(\pi)$.
Let $A \subset V$, $x \in A$, $a \in A^c$, 
and $\pi \in \caS_V^{a \to \ell_n}$ such that 
$A \in {\rm Inv}_0(\pi)$ and $x = \min \gamma(\pi) \cap A$. Then
there exist (unique) $\pi|_A \in \caS_A^{x \to \ell_n}$ and 
$\pi|_{A^c} \in \caS_{A^c \cup \{x\}}^{a \to x}$ 
such that $\pi|_A(z)= \pi(z)$ for all $z \in A$ and $\pi_{A^c}(z) = 
\pi(z)$ for all $z \in A^c$. In this situation, for $\caF_{A^c}$-measurable 
$f: \caS_V^{0 \to \ell_n} \to 
\bbR$, we use the same symbol to denote the function 
$f: \caS_{A^c \cup \{x\}}^{a \to x} \to \bbR$ with 
$f(\pi) = f(\pi_{A^c})$. In the same 
way, we get $g(\pi) = g(\pi_{A})$ for $\caF_{A}$-measurable $g$.

\begin{proposition}
\label{prop:Markovspatial open cycle}
Let $A \subset V$, $a \in A^c$, $x \in A$. Then, \\[3mm]
(i): $\bbP^{a \rightarrow \ell_n}_V(A \in {\rm Inv}_0, 
x = \min \gamma \cap A) = Z^{a \rightarrow x}(A^c \cup \{x\}) 
Z^{x \rightarrow \ell_n}(A) / Z^{a \rightarrow z}(V)$.\\[2mm]
(ii): For $\caF_{A^c}$-measurable $f$
and $\caF_{A}$-measurable $g$,
we have 
\[
\bbE^{a \rightarrow \ell_n}_V(f | A \in {\rm Inv}_0, x = \min \gamma \cap 
A) = \bbE^{a \rightarrow x}_{A^c \cup \{x\}} (f) 
\]
and 
\[
\bbE^{a \rightarrow \ell_n}_V(g| A \in {\rm Inv}_0, x = \min \gamma \cap A) 
= \bbE^{x \rightarrow \ell_n}_{A}(g).
\]
(iii): For  $\caF_{A^c}$-measurable $f$
and  $\caF_{A}$-measurable $g$,
we have 
\[
\bbE^{a \rightarrow \ell_n}_V(f \, g | A \in {\rm Inv}_0, x = \min \gamma \cap A) = \bbE^{a \rightarrow x}_{A^c \cup \{x\}}(f) \,  
\bbE^{x \rightarrow \ell_n}_{A}(g).
\]
(iv): For $\caF_{A}$-measurable $g$ and $\mathcal{Q} \in \caF_{A^c}$, we have 
\begin{equation}
\label{eq:spatialMarkov2open}
\bbE^{a \rightarrow \ell_n}_V(g | A \in {\rm Inv}_0, x = \min \gamma \cap A, \mathcal{Q}) = 
\bbE^{x \rightarrow \ell_n}_{A}(g).
\end{equation}
\end{proposition}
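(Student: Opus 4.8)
The plan is to follow the template of the proof of Proposition \ref{prop:Markovspatial}, the one genuinely new ingredient being a careful bijective decomposition of an open-cycle permutation along an almost-invariant set. First I would fix $A \subset V$, $a \in A^c$, $x \in A$, and record the following structural fact: if $\pi \in \caS_V^{a \to \ell_n}$ satisfies $A \in {\rm Inv}_0(\pi)$ and $x = \min \gamma(\pi) \cap A$, then, writing $z$ for the endpoint of $\gamma(\pi)$, one has $z \in A$, the initial segment of $\gamma(\pi)$ from $a$ up to $x$ lies entirely in $A^c \cup \{x\}$, and the terminal segment from $x$ onward lies entirely in $A$. Indeed, $\pi(A) \subset A$ forces $\Orb_\pi(\{x\})$ to remain in $A$ (hence $z \in A$), while $x = \min \gamma(\pi) \cap A$ forces everything strictly before $x$ to avoid $A$. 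Consequently the restrictions are well defined elements $\pi|_A \in \caS_A^{x \to \ell_n}$ (with $\pi|_A(w) = \pi(w)$ for $w \in A$) and $\pi|_{A^c} \in \caS_{A^c \cup \{x\}}^{a \to x}$ (with $\pi|_{A^c}(w) = \pi(w)$ for $w \in A^c$ and $\pi|_{A^c}(x) = x$).

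Next I would verify that $\pi \mapsto (\pi|_{A^c}, \pi|_A)$ is a bijection from $\{\pi \in \caS_V^{a\to\ell_n} : A \in {\rm Inv}_0(\pi),\ x = \min \gamma(\pi)\cap A\}$ onto $\caS_{A^c\cup\{x\}}^{a\to x} \times \caS_A^{x\to\ell_n}$, with inverse given by gluing, $\pi(w) = \sigma(w)$ for $w \in A^c$ and $\pi(w) = \tau(w)$ for $w \in A$. Checking that the glued map is again an admissible open-cycle permutation is routine bookkeeping; the only point to watch is that $x$ is the terminal vertex on the $A^c$-side, so $\sigma(x) = x$, while $x$ is the initial vertex on the $A$-side, so $\tau$ moves $x$ into $A$; hence the images $\sigma(A^c) = (A^c\setminus\{a\})\cup\{x\}$ and $\tau(A\setminus\{z\}) = A\setminus\{x\}$ are disjoint, and one reads off that the glue is a bijection of $V\setminus\{z\}$ onto $V\setminus\{a\}$ fixing $z$, with $A \in {\rm Inv}_0$ and $x = \min \gamma \cap A$. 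Moreover the energy is additive: the term $w = x$ contributes $|\tau(x)-x|$ on the $A$-side and $0$ on the $A^c$-side, so $\caH_V(\pi) = \caH_{A^c\cup\{x\}}(\pi|_{A^c}) + \caH_A(\pi|_A)$.

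Combining the bijection with energy additivity yields the master identity, for $\caF_{A^c}$-measurable $f$ and $\caF_A$-measurable $g$,
\[
\sum_{\substack{\pi \in \caS_V^{a\to\ell_n} :\\ A \in {\rm Inv}_0(\pi),\ x = \min \gamma(\pi)\cap A}} f(\pi)\, g(\pi)\, \e{-\alpha \caH_V(\pi)} \;=\; \Big( \sum_{\sigma \in \caS_{A^c\cup\{x\}}^{a\to x}} f(\sigma)\,\e{-\alpha \caH_{A^c\cup\{x\}}(\sigma)} \Big) \Big( \sum_{\tau \in \caS_A^{x\to\ell_n}} g(\tau)\,\e{-\alpha \caH_A(\tau)} \Big),
\]
the exact analogue of \eqref{eq: main markov formula}. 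Inserting $f = g = 1$ and dividing by $Z^{a\to\ell_n}(V)$ gives (i). Setting $g = 1$ (resp.\ $f = 1$), dividing by $Z^{a\to\ell_n}(V)$, and using (i) gives the two formulas of (ii). Dividing the master identity by $Z^{a\to x}(A^c\cup\{x\})\, Z^{x\to\ell_n}(A)$ and using (i) on the left gives (iii). Finally (iv) follows from (iii) and (ii) as in Proposition \ref{prop:Markovspatial}: writing $\bbE_V^{a\to\ell_n}(g \mid A \in {\rm Inv}_0, x = \min \gamma\cap A, \mathcal{Q})$ as the ratio of $\bbE_V^{a\to\ell_n}(g\,\mathbbm{1}\{\mathcal{Q}\} \mid A \in {\rm Inv}_0, x = \min \gamma\cap A)$ and $\bbP_V^{a\to\ell_n}(\mathcal{Q}\mid A \in {\rm Inv}_0, x = \min \gamma\cap A)$, apply (iii) to the numerator with $f = \mathbbm{1}\{\mathcal{Q}\}$ and (ii) to the denominator, and cancel. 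The main (indeed essentially the only non-formal) obstacle is getting the structural fact and the bijection right: one must see that the asymmetry of ${\rm Inv}_0$ — the failure of $A \in {\rm Inv}_0(\pi)$ to imply $A^c \in {\rm Inv}_0(\pi)$ — is compensated exactly by adjoining the single vertex $x$ to $A^c$ and by conditioning on the identity of the unique entrance point $x = \min \gamma(\pi)\cap A$; once this is in place, the rest is identical in spirit to the closed-cycle case.
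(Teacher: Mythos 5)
Your proof is correct and follows the same route as the paper: establish energy additivity $\caH_V(\pi) = \caH_{A^c\cup\{x\}}(\pi|_{A^c}) + \caH_A(\pi|_A)$, derive the product decomposition of the weighted sum, and then extract (i)--(iv) exactly as in Proposition~\ref{prop:Markovspatial}. The paper simply states the master identity and defers the remaining bookkeeping to the closed-cycle case; the extra details you supply (the bijectivity of the decomposition, and replacing the $A\leftrightarrow A^c$ symmetry used for (iv) in the closed-cycle proof by a direct appeal to (iii) with $f=\mathbbm{1}\{\caQ\}$) are precisely the ones one needs to fill in, and your handling of the asymmetry of ${\rm Inv}_0$ is correct.
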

\begin{proof}
Let  
$\caH_B(\pi) := \sum_{x \in B} |\pi(x) - x|$. Then for all $A \subset 
V$, all $x \in A$ and all $\pi \in \caS_V^{a \to \ell_n}$ such that 
$A \in {\rm Inv}_0(\pi)$ and $x = \min \gamma \cap A$, 
we have 
\[
\caH_V(\pi) = \caH_{A^c \cup \{x\}}(\pi|_{A^c}) + \caH_A(\pi|_A).
\]
This holds since $\pi|_{A^c}(x) = x$. Consequently, for $\caF_{A^c}$ 
measurable $f$ and $\caF_A$-measurable $g$, we have 
\[
	\sum_{\substack{\pi \in S^{a \rightarrow \ell_n}_V: \\ 
	A \in { \rm Inv}_{p}(\pi), x = \min \gamma(\pi) \cap A} } 
	f(\pi) g(\pi)\e{- \alpha \mathcal{H}_{V}(\pi)} = 
	\sum_{\pi \in S^{a \rightarrow x}_{A^c \cup \{x\}}} f(\pi|_{A^c}) 
	\e{- \alpha \mathcal{H}_{A^c \cup \{x\}}(\pi_{A^c})} 
	\sum_{\tilde \pi \in S^{x \rightarrow \ell_n}_{A}} g({\tilde \pi_A})
	\e{-\alpha \mathcal{H}_{A}(\pi|_A)}.
\]
The proof now follows the same steps as  the proof of Proposition 
\ref{prop:Markovspatial}.
\end{proof}

For proving equation \eqref{MC transition matrix}, we fix $x_1, \ldots x_{k+1} \in 
\Lambda_n$ and subsets $Q_1, \ldots, Q_{k+1}$ of $\Lambda_n$ which satisfy 
the deterministic conditions for regeneration points, in particular 
for all $i \leq k+1$,  
\begin{equation} \label{inclusion}
\fc x_{i} \geq \fc x_{i-1} + \log n, \qquad \text{ and } \qquad  \caC_{x_i} \subset Q_i \subset \{ z: \fc z \geq \fc x_i \}.
\end{equation}
Let us write 
$\caJ_i = \{ X_i = x_i, R_i = Q_i \}$ for brevity, and write 
\[
C(x,y) = \{\pi:  y = \min \{ w \in \caR(\pi): \bar w \geq \bar x + \log n \} \} .
\]
Then 
\[
\begin{split}
& \bbP_{\Lambda_n}^{0 \to \ell_n} \Big( X_{k+1} = x_{k+1}, 
R_{k+1} = Q_{k+1} \Big| \bigcap_{i=1}^k \caJ_i \Big) =
\bbP_{\Lambda_n}^{0 \to \ell_n} \Big( C(x_k,x_{k+1}), Q_{k+1} = R(x_{k+1}) \Big| 
\bigcap_{i=1}^k \caJ_i \Big) \\
& = \bbP_{\Lambda_n}^{0 \to \ell_n} \Big( C(x_k,x_{k+1}), R(x_{k+1}) =Q_{k+1} 
  \Big| 
\Orb(x_k) \subset \caC_{x_k}, \gamma \setminus \Orb(x_k) \subset 
\{z: \fc z < \fc{x}_k \}, R_k = Q_k, \bigcap_{i=1}^{k-1} \caJ_i\Big)\\ 
& = 
\frac{\bbP_{\Lambda_n}^{0 \to \ell_n} \Big( C(x_k,x_{k+1}), R(x_{k+1}) = 
Q_{k+1},  
\Orb(x_k) \subset \caC_{x_k} \Big| \gamma \setminus \Orb(x_k) \subset 
\{z: \fc z < \fc{x}_k \}, R_k = Q_k, \bigcap_{i=1}^{k-1} \caJ_i\Big) } 
{\bbP_{\Lambda_n}^{0 \to \ell_n} \Big( 
\Orb(x_k) \subset \caC_{x_k} \Big| \gamma \setminus \Orb(x_k) \subset 
\{z: \fc z < \fc x_k \}, R_k = Q_k, \bigcap_{i=1}^{k-1} \caJ_i\Big) }.
\end{split}
\]
The event 
$C(x_k,x_{k+1}) \cup \{ R_{k+1} = R', \Orb(x_k) \subset \caC_{x_k}\}$ is 
$\caF_{\caC_{x_k}}$-measurable, and since $\caC_{x_k} \subset Q_k$, it is 
$\caF_{Q_k}$-measurable. On the other hand, the event 
\[
\caQ := \{ \gamma \setminus \Orb(x_k) \subset 
\{z: \fc z < \fc x_k \}, R_k = Q_k, \bigcap_{i=1}^{k-1} \caJ_i \}
\]
is $\caF_{Q_k^c \setminus \{x_k\}}$-measurable, and contains the set 
$\{ Q_k \in {\rm Inv}_{x_k} \} \cup \{x_k = \min \gamma \cap Q_k\}$. 
We can therefore apply part (iv) 
of Proposition \ref{prop:Markovspatial open cycle} with $A = Q_k$ to both 
numerator and denominator above, and find 
\[
\bbP_{\Lambda_n}^{0 \to \ell_n} \Big( X_{k+1} = x', R_{k+1} = Q_{k+1} \Big| 
\bigcap_{i=1}^k \caJ_i \Big) = \bbP_{Q_k}^{x_k \to \ell_n}\Big(X_1 = x_{k+1}, 
R_1 = Q_{k+1} \Big| \gamma \in \caC_{x_k} \Big),
\]
as claimed. 

By the symmetry condition (R3) of regeneration sets, the process 
$(\sc X_i)$, while not being a Markov process, is a $\bbZ^{d-1}$-valued
martingale under 
$\bbP_{\Lambda_n}^{0 \to \ell_n}$. By Doob's $L^2$-inequality, we have 
\begin{equation} \label{doobs argument}
\bbE(\max_{j \leq n_0} |\sc X_j|^2) \leq 4 \bbE(|\sc X_{n_0}|^2) = 4 
\sum_{j=1}^{n_0} \bbE(( |\sc  X_{j} - \sc X_{j-1}|^2).
\end{equation}

The main technical part of the proof will consist in showing the following 
result: 

\begin{proposition} \label{moment bounds}
	Let $p \geq 1$. Then there exist $\alpha_0 < \infty$, $C < \infty$ and 
	$N \in \bbN$ so that for all $y \in \Lambda_n$, $n>N$, 
	$\alpha > \alpha_0$, $A \in \caA_y^s$, $k \in \bbR$ we have 
	\[
	\bbP_A^{y \to \ell_n} \Big( |X_1 - y| > k \log n \Big| \gamma 
	\in \caC_y \Big) \leq C k^{-p}.
	\]	
\end{proposition}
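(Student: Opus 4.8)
The plan is to reduce the statement to showing that, conditionally on $\gamma\subset\caC_y$, the slab
\[
S_k:=\{\,z\in A:\ \fc y+\log n\le\fc z\le\fc y+\tfrac k2\log n\,\}
\]
contains a regeneration point of $\pi$, and to prove the latter by combining a renewal‑type abundance of pre‑regeneration points of the embedded walk with a branching‑process estimate showing that the background almost never obstructs them. Throughout I work under $\bbP_A^{y\to\ell_n}(\,\cdot\mid\gamma\subset\caC_y)$, write $X_1$ for the first regeneration point with $\fc{X_1}\ge\fc y+\log n$, and let $\gamma_{[y,X_1]}$ denote the portion of the embedded walk between $y$ and $X_1$. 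Writing $\|\gamma_{[y,X_1]}\|=(\fc{X_1}-\fc y)+E$ with $E\ge 0$ counting (twice) the backward horizontal steps plus the transverse steps, and using $|\sc{X_1}-\sc y|\le\|\gamma_{[y,X_1]}\|$, one gets $|X_1-y|\le(\fc{X_1}-\fc y)+E$, so
\[
\{\,|X_1-y|>k\log n\,\}\ \subseteq\ \{\,\fc{X_1}-\fc y>\tfrac k2\log n\,\}\ \cup\ \{\,E>\tfrac k2\log n\,\}.
\]
The second event is controlled by a step‑counting argument: an embedded walk with $E$ excess steps carries an extra factor $e^{-\alpha E}$, there are at most exponentially many of each length, and the accompanying ratios of partition functions are controlled uniformly in $n$ and in the enormous set $A$ by Propositions~\ref{prop:estimategamma}--\ref{prop:lowerboundpart} together with Corollary~\ref{cor:decay3}; for $\alpha$ large enough to beat the connective constant this gives $\bbP(E>\tfrac k2\log n)\le C'n^{-c'k}$. (If $\fc y+\log n>n$ then $\fc{X_1}=n$ and the first event is empty.) The first event says exactly that there is no regeneration point of $\pi$ in $S_k$.

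Next I would show that pre‑regeneration points are abundant in $S_k$. For large $\alpha$ the embedded walk is strongly ballistic---each of its steps costs $e^{-\alpha}$, so, comparing with straight segments via the estimates of Section~\ref{sect:cyclelength}, it makes linear horizontal progress with transverse fluctuations of order $\sqrt m$ over $m$ horizontal columns---and one deduces that the columns $j$ for which $\gamma$ lies left of $\ell_j$ before hitting it at a point $x$ and then stays inside $\caC_x$ (so that $x$ is a pre‑regeneration point) form a renewal‑type sequence of positive density and exponentially small gaps, uniformly in $n$, in $\alpha>\alpha_0$, in $A\in\caA_y^s$, and in the entrance point of $\gamma$ into $S_k$. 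Hence, except on an event of probability $\le n^{-ck}$, the walk has at least $\rho\,k\log n$ pre‑regeneration points in $S_k$; I fix among them a subset $x^{(1)},\dots,x^{(r)}$ with $r\ge\rho'k$ whose columns are pairwise at least $\beta\log n$ apart, $\beta=\beta(p,d)$ a large constant to be chosen.

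Then I would show the background rarely blocks all of them. By additivity of the energy, the conditional law of the background $\pi_0$ given $\gamma$ is $\bbP_{A\setminus\gamma}$, an honest spatial random permutation, hence (by Theorem~\ref{theo1}, for $\alpha$ large) has cycle length stochastically dominated by some $\xi$ with $\bbE(\xi)<2$. A pre‑regeneration point $x$ fails to be a regeneration point only if the minimal reflection‑symmetric $\pi_0$‑invariant superset of $\caC_x$ leaves $\{z:\fc z\ge\fc x\}$; by the mechanism of Propositions~\ref{prop:size of invariant set}--\ref{prop:exponential bound} this is a subcritical branching event in which a lineage lowering its column coordinate by $i$ must spend total cycle‑length $\gtrsim i$. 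A union bound over the $O(i^{d-1})$ sites of $\caC_x$ in column $\fc x+i$, weighted by the exponential tail of $\xi$, plus the crude bound $\bbP(\text{some }\pi_0\text{-cycle is longer than }\beta\log n\mid\gamma)\le|A|\,\bbP(\xi>\beta\log n)\le Cn^{\,d-c_\xi\beta}$, gives
\[
\bbP\big(x\text{ blocked}\mid\gamma\big)\ \le\ \varepsilon(\alpha)+Cn^{\,d-c_\xi\beta},\qquad \varepsilon(\alpha)\to 0\text{ as }\alpha\to\infty .
\]
On the complement of the long‑cycle event the ``danger regions'' of $x^{(1)},\dots,x^{(r)}$ are pairwise disjoint, so, peeling them off one at a time by iterative sampling (Lemma~\ref{theo:samplingstrategy}), the probability that all of them are blocked is at most $\varepsilon(\alpha)^{\rho'k}+Cn^{\,d-c_\xi\beta}$. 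Collecting the finitely many error terms above ($C'n^{-c'k}$, $n^{-ck}$, $\varepsilon(\alpha)^{\rho'k}$, $Cn^{\,d-c_\xi\beta}$) and choosing first $\beta$ large, then $\alpha_0$ large and $N$ large, each can be made $\le C_0^{-1}k^{-p}$ for all $k\ge1$, where $C_0$ is their number; this yields $\bbP(|X_1-y|>k\log n\mid\gamma\subset\caC_y)\le k^{-p}$. One must additionally arrange the peeling so that the ``stays in $\caC_x$ forever'' clause of a pre‑regeneration point---a future event---is compatible with the column‑by‑column conditioning: this is where the open‑cycle spatial Markov property of Proposition~\ref{prop:Markovspatial open cycle} enters, together with the exponential improbability of large backward excursions of $\gamma$.

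I expect the main obstacle to be the blocking estimate of the last step. The crude bound behind \eqref{eq:observation} is useless here because the seed $\caC_x$ touches $\ell_{\fc x}$, so $d(\caC_x,\{z:\fc z<\fc x\})=0$; one must instead exploit that $\caC_x$ is full‑width away from $\ell_{\fc x}$, so that only $\pi_0$‑cycles crossing $\ell_{\fc x}$ within a horizontal distance comparable to their (exponentially controlled) length can carry the invariant set leftward. Likewise the constants in Proposition~\ref{prop:exponential bound} depend on $|A|\asymp n^d$, so uniformity in $n$ and in $A$ forces both this localized branching analysis and the volume‑uniform comparisons of Section~\ref{sect:cyclelength} (Corollary~\ref{cor:decay3}, Proposition~\ref{prop:decay of boundary 2}). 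Finally, it is precisely the error terms of the form $Cn^{\,d-c_\xi\beta}$---the price of forbidding anomalously long background cycles on the scale $\beta\log n$---that obstruct an exponential‑in‑$k$ bound and make both the $\log n$ in the statement and the restriction $n\ge N$ intrinsic to this approach.
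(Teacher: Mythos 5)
Your plan is correct and follows essentially the same route as the paper: reduce to finding a regeneration point within horizontal distance $O(k\log n)$, control the excess length of the walk up to that scale via the step cost $e^{-\alpha}$ and the partition-function comparisons of Sections 3--4 (this is the paper's Proposition \ref{fundamental lemma}), deduce an abundance of pre-regeneration points (the paper does this deterministically in Lemma \ref{lem:numbregpoint}), and then show each one regenerates with uniformly positive probability while decoupling successive attempts, separated by order $\log n$, through the strong Markov property and iterative sampling (Propositions \ref{prop:uniformlyPositive} and \ref{prop:strong Markov}), collecting exponential-in-$k$ errors plus an $n^{-(\text{large})}$ term beaten by tuning a parameter against $p$. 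The difficulties you flag --- the seed touching $\ell_{\fc x}$ so that the crude distance bound fails, and the future-measurability of the ``stays in $\caC_x$'' clause --- are exactly the ones the paper's proof addresses.
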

 
The proof is long, so we postpone it to the end of the section. Using 
Proposition \ref{moment bounds} with $p>2$, 
we find that for suitable $\alpha_c$, $C$ and $N$, 
\[
\begin{split}
& \bbE_{\Lambda_n}^{0 \to \ell_n} \Big(\frac{|X_{j} - X_{j-1}|^2}{(\log n)^{2}} 
\Big| X_j = y, R(X_j) = R \Big) \leq 
\int_1^\infty \bbP_R^{y \to \ell_n} (|X_1| \geq k \log n) k \, \dd k \\
& \leq 
C \int_1^\infty k^{1-p} \, \dd k = C / (p-2) =: C_p
\end{split}
\]
uniformly in $n>N$, $\alpha > \alpha_0$ and $R \in \caA_y^s$. 
By integrating over possible $y$ and $R$ we conclude 
$\bbE (| \sc X_j - \sc X_{j-1} |^2) \leq (\log n)^2 C_p$. 
Since
$\bar X_{i+1} \geq \bar X_i + \log n$ by construction, the martingale
takes at most
$n / \log n$ steps before hitting the point $\lim_{n \to \infty} \pi^n(0)$, 
after which it does no longer move. This, Chebyshevs inequality and 
\eqref{doobs argument} give 
\begin{equation}
\label{regen points constraint}
\bbP( \max_{i \in \bbN} |\sc X_i| > M \sqrt{n \log n} ) \leq 
\frac{1}{M^2 n \log n} \bbE( \max_{i \in \bbN} | \sc X_i|^2) \leq 
\frac{1}{M^2 n \log n} 4 \frac{n}{\log n} C_p (\log n)^2 = \frac{4 C_p}{M^2}.
\end{equation}
In conclusion, we now know that with high probability none of the special 
regeneration points $(X_i)$ is further than $\sqrt{n \log n}$ away from the 
line $\{\sc x = 0 \}$. 

In order to control the parts of $\gamma$ in between the $X_i$, 
we first apply Proposition \ref{moment bounds} with $k = n^{1/4}$ and 
$p=8$, and find 
\[
\bbP_{\Lambda_n}^{0 \to \ell_n} (| \fc  X_{i+1}- \fc X_i | > n^{1/4} 
\log n | X_i = y, R_i = A) \leq C_8 n^{-2}
\]
for all $y \in \Lambda_n$ and all $A \in \caA_y^s$. Since there are at most 
$n / \log n$ regeneration points, the union 
bound then gives (for $n$ large enough)
\[
\bbP_{\Lambda_n}^{0 \to \ell_n} ( \max_i | \fc X_{i+1} - \fc X_i | > 
n^{1/3} ) \leq C_8 n^{-1}.
\] 
We thus find 
\begin{equation} \label{almost there}
\begin{split}
& \bbP_{\Lambda_n}^{0 \to \ell_n} ( \max_{x \in \gamma} | \sc x | > 
2 M \sqrt{n \log n} ) \leq \frac{4 C_p}{M^2} + \frac{C_8}{n} + \\ 
& + 
\bbP_{\Lambda_n}^{0 \to \ell_n} \Big( \max_{x \in \gamma} | \sc x | > 
2 M \sqrt{n \log n}, \max_i | \fc X_{i+1} - \fc X_i | < 
n^{1/3}, \max_i |\sc X_i| > M \sqrt{n \log n} \Big).
\end{split}
\end{equation}

By definition, a regeneration point is the last time that $\gamma$ 
crosses a certain vertical hyperplane.
Proposition \ref{fundamental lemma} below deals with the probability that 
the length of $\gamma$ is large before it crosses such a hyperplane for 
the last time. 
Applying it with $\delta = 1$, $B = \Lambda_n$ and $L = n^{1/3}$, we see that 
the probability that the piece of $\gamma$ between $X_i$ and $X_{i+1}$ 
is longer than $2 n^{1/3}$ is less than $C \e{-c n^{1/3}}$ for each $i$. 
On the other hand, a path would need at least $2M \sqrt{n \log n}$ 
steps to exceed level $2 M \sqrt{n \log n}$ when starting (and ending) 
below level $M \sqrt{n \log n}$. Thus the second line in \eqref{almost there}
is bounded by $n \e{-c n^{1/3}}$. Taking $n$ so large that 
$n>M$ and $n \e{-c n^{1/3}} < \frac1M$, Theorem \ref{theo2} is proved,
provided we can prove the two propositions that we used. 

We start with stating and proving 
Proposition \ref{fundamental lemma}, which itself is a 
fundamental ingredient to the 
proof of Proposition \ref{moment bounds}. For its statement and also for 
later use, we introduce the following notation. For 
$L \in \bbN$ and $\pi \in \caS_A^{y \to \ell_n}$, set  
\[
x_L(\pi) := \max \{ x \in \gamma(\pi): \fc x = \fc y + L \}, 
\qquad \text{and} \qquad \rho_L(\pi) := \gamma(\pi) \setminus \Orb_\pi(x_L).
\]
$x_L$ is the last point at which the hyperplane $\{ z \in A: \fc z = \fc y 
+ L \}$ is crossed by $\gamma$, and $\rho_L$ is the piece of $\gamma$ that 
lies before that point. $|A|$ denotes the cardinality of a 
set $A$. Recall from above that $\caL_A^{x \to y}$ denotes the 
	set of all self-avoiding walks  that start in $x$, end in 
	$y$ and are contained in $A$. We write $\caL_A^{x \nearrow y}$ for the subset of those 
	$\gamma \in \caL_A^{x \to y}$ where $\gamma \cap \ell_{\fc x} = \{x\}$, 
	i.e. for the walks that never visit the hyperplane containing their 
	starting point again after their first step. We set 
	$\caL_A^{x \to \ell_n} := \bigcup_{y \in \ell_n} \caL_A^{x \to y}$, 
	and the same for $\caL_A^{x \nearrow \ell_n}$.

\begin{proposition} \label{fundamental lemma}
	For each $\delta > 0$, there exist $\alpha_0 < \infty$, $c>0$ and 
	$C < \infty$ such that for all $n > N$, $\alpha > \alpha_0$, 
	$L \in \bbN$ with $L > 2 \log n$, $y \in \Lambda_n$ with 
	$\bar y \leq n - L$, and $A, B \in \caA_y$ with $B \subset A$, we have 
	\[
	\bbP_A^{y \to \ell_n} \Big( | \rho_L | \geq (1 + \delta) L \Big| \rho_L 
	\subset B \Big) \leq C \e{- c L}.
	\]
\end{proposition}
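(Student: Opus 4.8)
The plan is to run an energy--entropy argument on the self-avoiding walk $\gamma$ embedded in $\pi$, using the partition-function comparisons of Section~\ref{sect:cyclelength} to absorb the permutation background. First I would rewrite the conditional probability as a ratio of weighted sums over self-avoiding walks: summing \eqref{open cycle measure} over the permutations with a prescribed open cycle $\eta$ gives
\[
\bbP_A^{y\to\ell_n}(\gamma=\eta)=\frac{\e{-\alpha\|\eta\|}\,Z(A\setminus\eta)}{Z^{y\to\ell_n}(A)}.
\]
Each $\eta\in\caL_A^{y\to\ell_n}$ I would cut at its last visit $x_L$ to the hyperplane $\ell_{\fc y+L}$, writing $\eta=\rho\cup\sigma$ with $\rho=\rho_L\cup\{x_L\}\in\caL_A^{y\to x_L}$ and $\sigma=\Orb_\pi(x_L)\in\caL_A^{x_L\nearrow\ell_n}$; conversely every concatenation of a $\rho\in\caL_A^{y\to w}$ and a $\sigma\in\caL_A^{w\nearrow\ell_n}$ meeting only at $w$ arises this way with $x_L=w$. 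Because $\fc y+L>\fc y+\log n$ and $B$ is weakly $y$-admissible, $x_L$ automatically lies in $B$, so $\{\rho_L\subset B\}$ is just $\{\rho\subset B\}$, and $Z^{y\to\ell_n}(A)$ cancels in the conditional probability. Summing over the suffix and using monotonicity of $Z$ in the vertex set, each $\rho$ carries a factor at most $Z^{w\nearrow\ell_n}(A):=\sum_{\sigma\in\caL_A^{w\nearrow\ell_n}}\e{-\alpha\|\sigma\|}Z(A\setminus\sigma)$, which is independent of $\rho$; the key point is that a $\nearrow$-suffix stays inside the slab $\{\fc z\ge\fc y+L\}$.

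For the numerator I would then bound
\[
\bbP_A^{y\to\ell_n}\!\big(|\rho_L|\ge(1+\delta)L,\ \rho_L\subset B\big)\ \le\ \frac{1}{Z^{y\to\ell_n}(A)}\sum_{w\in\ell_{\fc y+L}}Z^{w\nearrow\ell_n}(A)\!\!\sum_{m\ge(1+\delta)L}\!\!\bigl|\{\rho\in\caL_B^{y\to w}:\|\rho\|=m\}\bigr|\,\e{-\alpha m},
\]
and control the walk count by translation invariance and the standard bound $|SAW_m|\le C_\delta(\mu'_{\bbZ^d}+\delta)^m=C_\delta(\mu_{\bbZ^d}+\delta)^m$ (Hammersley). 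The inner sum is then at most $C_\delta(1-\lambda)^{-1}\lambda^{(1+\delta)L}$ with $\lambda:=(\mu_{\bbZ^d}+\delta)\e{-\alpha}<1$ for $\alpha$ large; one can also insert the factor $(1+\e{-2\alpha})^{-m/2}$ from Proposition~\ref{prop:estimategamma} to improve the threshold, and keeping the endpoint $w$ one sees the count decays in $\|\sc w-\sc y\|_1$, which will matter for the sum over $w$. For the denominator I would lower-bound $\bbP_A^{y\to\ell_n}(\rho_L\subset B)$ by restricting, for each $w\in\ell_{\fc y+L}$ with $\|\sc w-\sc y\|_1\le\delta L/2$, to configurations in which $\rho$ equals one explicit short path $P_w\subset B$ of length $L+\|\sc w-\sc y\|_1\le(1+\tfrac\delta2)L$ (straight up from $y$ to level $\fc y+\log n$, then laterally inside that level to $\sc z=\sc w$, then straight up to $w$), which lies in $B$ because $B$ is $y$-admissible. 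Since $P_w\setminus\{w\}$ sits below level $\fc y+L$, it is disjoint from every $\nearrow$-suffix, so Proposition~\ref{prop:lowerboundpart} (applied vertex by vertex) bounds the cost of deleting it from the background, giving
\[
\bbP_A^{y\to\ell_n}(\rho_L\subset B)\ \ge\ \frac{(c_1(\alpha)\e{-\alpha})^{(1+\delta/2)L}}{Z^{y\to\ell_n}(A)}\sum_{w:\,\|\sc w-\sc y\|_1\le\delta L/2}Z^{w\nearrow\ell_n}(A).
\]
Dividing, the factors $Z^{y\to\ell_n}(A)$ cancel and --- modulo comparing the two $w$-sums of $Z^{w\nearrow\ell_n}(A)$, see below --- the conditional probability is at most a sub-exponential (in $L$) factor times $(\mu_{\bbZ^d}+\delta)^{(1+\delta)L}c_1(\alpha)^{-(1+\delta/2)L}\e{-\alpha\delta L/2}$. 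As $c_1(\alpha)\to1$ when $\alpha\to\infty$, the base of this exponential is $<1$ for all $\alpha>\alpha_0(\delta)$, which yields $C\e{-cL}$ with $C=C(\delta)$, $c=c(\delta)>0$, uniformly in $y$, in $B\subset A$, and in $n>N$; here $N$ is chosen so that $L>2\log n$ lets $|\ell_{\fc y+L}|\le(2n)^{d-1}\le\e{(d-1)L/2}$ be absorbed, and so that the cycle-length bound of Theorem~\ref{theo1} is available and all the decay estimates of Section~\ref{sect:cyclegrowth} apply.

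The main obstacle is the comparison I postponed: one needs $Z^{w\nearrow\ell_n}(A)\le D\,Z^{w'\nearrow\ell_n}(A)$ for $w,w'\in\ell_{\fc y+L}$, with $D=D(\alpha,\xi)$ independent of $n,y,A,L$ (together with the decay of the walk count in $\|\sc w-\sc y\|_1$ this then makes the two $w$-sums comparable up to a harmless factor). Here the hypothesis $L>2\log n$ is essential: by weak admissibility the slab $\{\fc z\ge\fc y+L\}$ is a \emph{full} cylindrical slice of $\Lambda_n$ contained in $A$, hence invariant under the periodic shifts of the $\sc z$-coordinates. Using the shift $\tau$ with $\tau(w')=w$, the forced cycles above $\ell_{\fc y+L}$ and their energies match up, so $Z^{w\nearrow\ell_n}(A)/Z^{w'\nearrow\ell_n}(A)$ reduces to a supremum over such $\eta$ of $Z(A\setminus\tau^{-1}\eta)/Z(A\setminus\eta)$; the crude inequalities of Section~\ref{sect:cyclelength} bound this only by $\sup_\eta c_1(\alpha)^{-\|\eta\|}$, which is useless since $\|\eta\|$ is unbounded. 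The correct estimate must isolate the contribution of background cycles crossing $\ell_{\fc y+L}$ --- the only place where $\tau$ is felt --- via the spatial Markov property (Proposition~\ref{prop:Markovspatial}) and control it uniformly in $n$ using the exponential decay of cycle lengths (Theorem~\ref{theo1}) and the boundary-influence estimates (Proposition~\ref{prop:decay of boundary 2}, Corollary~\ref{cor:decay3}). Getting this down to a bounded multiplicative factor is the technical heart of the argument; the energy--entropy bookkeeping above is routine once it is in place.
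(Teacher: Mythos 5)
Your decomposition of the open cycle at $x_L$ into a prefix $\rho$ and a $\nearrow$-suffix, the monotonicity bound on the numerator, the use of an explicit short path inside $B$ together with Proposition~\ref{prop:lowerboundpart} for the denominator, and the final energy--entropy bookkeeping are all essentially the same as the paper's. You also correctly pinpoint where the real difficulty lies: one has to show that the suffix partition functions $Z^{w\nearrow\ell_n}(A)$, for $w$ ranging over the hyperplane $\ell_{\fc y+L}$, are mutually comparable by a factor independent of $n$, $L$, $y$, $A$.

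However, you then concede that you do not know how to get that comparison, and the route you sketch (``isolate the contribution of background cycles crossing $\ell_{\fc y+L}$ via the spatial Markov property'') is not what works. The paper's solution is both simpler and a step you had all the ingredients for: \emph{do not compare $Z(A\setminus\sigma)$ with $Z(A\setminus\tau^{-1}\sigma)$ directly} --- as you note, $A$ need not be shift-invariant, so the crude $c_1(\alpha)^{-\|\sigma\|}$ bound is useless. Instead, first replace $Z(A\setminus\sigma)$ by $Z(\Lambda_n\setminus\sigma)$, up to a universal factor of $2$, using Corollary~\ref{cor:decay3}: since every $\nearrow$-suffix $\sigma$ lives in the slab $\{\fc z\geq\fc y+L\}$, weak admissibility and $L>2\log n$ give $d(\sigma,\Lambda_n\setminus A)>\log n$, and with $\kappa=2d+1$ the error factor in that corollary is then uniformly bounded (this is exactly the paper's inequality \eqref{partition ratio}). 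Once the background partition function is over the full cylinder $\Lambda_n$, the transverse shifts \emph{are} exact graph automorphisms, so the ratio
\[
\frac{\sum_{\sigma\in\caL_A^{h\nearrow\ell_n}}\e{-\alpha\|\sigma\|}\,Z(\Lambda_n\setminus\sigma)}{\sum_{\sigma\in\caL_A^{h_0\nearrow\ell_n}}\e{-\alpha\|\sigma\|}\,Z(\Lambda_n\setminus\sigma)}
\]
equals $1$ identically (the index sets also match because, again by admissibility, the slab is fully contained in $A$). That single observation replaces the entire ``technical heart'' you were stuck on, and yields $D=4$ in your notation. A further minor simplification in the paper is that one need not sum over endpoints $w$ in the denominator at all: a single straight prefix $\rho_0$ ending at $h_0$ suffices once the numerator's $h$-dependence has been removed by the translation argument. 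So the gap is genuine --- the proof as written does not close --- but it is a localised one, and the missing idea is precisely the $A\rightsquigarrow\Lambda_n$ replacement via \eqref{partition ratio}.
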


\begin{proof}
	Let $A \in \caA_y$. 		
	We claim that there exist 
	$\alpha_0 < \infty$ and $N \in \bbN$ such that for all 
	$\alpha > \alpha_0$, $n>N$, $L > \log n$,   
	$h \in A$ with $\fc h = \fc y + L$, and   
	$\gamma \in \caL_{A}^{h \nearrow \ell_n}$,  we have  
	\begin{equation}
		\label{partition ratio}
		\frac12 \frac{Z(A)}{Z(\Lambda_n)} \leq \frac{Z(A \setminus \gamma)}
		{Z(\Lambda_n \setminus \gamma)} \leq 2 \frac{Z(A)}{Z(\Lambda_n)}.	 
	\end{equation}
	This follows from Corollary \ref{cor:decay3}: Since $A$ is admissible and 
	$\gamma$ stays to the right of $h$, we have 
	$d(\gamma, \Lambda_n \setminus A) > \log n$, and clearly 
	$|\gamma| |\Lambda_n \setminus A| \leq n^{2d}$. By Theorem \ref{theo1},
	we can choose $\alpha_0$ and $N \in \bbN$ 
	so large that for $\alpha > \alpha_0$ the 
	cycle length of spatial random permutations
	(in an arbitrary domain) is bounded by a random variable $\xi$ with 
	the property that the total population $W$ of a Galton-Watson 
	process with offspring distribution $\xi - 1$ satisfies 
	$\bbP(W>n) \leq \e{- 2 \kappa n}$ for all $n > N$, where 
	$\kappa = 2d + 1$. This implies 
	\[
	|\gamma| |\Lambda_n \setminus A| \e{- \kappa d(\gamma, 
	\Lambda_n \setminus A)} \leq \tfrac1n,
	\]
	and by further increasing $N$ if necessary Corollary \ref{cor:decay3} now 
	yields \eqref{partition ratio} for all $n > N$. 
	
	Now we fix $h \in \ell_{\fc y + L}$ and $\rho \in \caL_A^{y \to h}$ such 
	that $\rho \subset B$, 
	and define 
	\[
	\caS_{A \setminus \rho}^{h \nearrow \ell_n} := 
	\{ \pi \in \caS_{A \setminus \rho}^{h \to \ell_n}: \fc x > \fc h 
	\text{ for all } x \in \gamma(\pi) \setminus \{h\} \}.
	\] 
	Then, since $\rho \subset B$, we have 
	\begin{equation} \label{eq:rho_L}
	\begin{split}
	\bbP_A^{y \to \ell_n} (\rho_{L} & = \rho | \rho_L \in B) = 
	\e{-\alpha \| \rho \|} 
	\frac{1}{\sum_{\pi \in 
	\caS_{A}^{h \to \ell_n}} 
	\bbone \{\rho_L(\pi) \subset B \} 
	\e{- \alpha \caH_{A \setminus \gamma}(\pi)}} 
	\sum_{\pi \in 
	\caS_{A \setminus \gamma}^{h \nearrow \ell_n}} 
	\e{- \alpha \caH_{A \setminus \gamma}(\pi)}  \\
	& = \e{-\alpha \| \rho \| }
	\sum_{\gamma \in \caL_A^{h \nearrow \ell_n}} \e{- \alpha \| \gamma \|}
	Z(A \setminus (\gamma \cup \rho)) 
	\frac{1}{\sum_{\pi \in 
	\caS_{A}^{h \to \ell_n}} 
	\bbone \{\rho_L(\pi) \subset B \} 
	\e{- \alpha \caH_{A \setminus \gamma}(\pi)}}
	\end{split}
	\end{equation}
	For estimating the denominator, let 
	$\rho_0 = \{ x \in A: \fc y \leq \fc x \leq \fc y + L, \sc x = \sc y \}$
	be the straight line from $y$ to the hyperplane $\ell_h$, and let 
	$h_0 = \rho_0 \cap \ell_h$. Since $B$ is admissible, 
	$\rho_0 \subset B$, and thus  
	\[
	\begin{split}
	\sum_{\pi \in 
	\caS_{A}^{h \to \ell_n}} 
	\bbone \{\rho_L(\pi) \subset B \} 
	\e{- \alpha \caH_{A \setminus \gamma}(\pi)} 
	& \geq \sum_{\pi \in \caS_A^{y \to \ell_n}} 
	\bbone \{ \rho(\pi) = \rho_0 \} \e{ -\alpha \caH_A(\pi)} 
	= \e{-\alpha L} Z^{h_0 \to \ell_n}(A \setminus \rho_0) \\ 
	&  \geq 
	\e{-\alpha  L} Z^{h_0 \nearrow \ell_n}(A \setminus \rho_0)
	= \e{-\alpha  L} \sum_{\gamma \in \caL_A^{h_0 \to \ell_n}} \e{-\alpha \| \gamma \| } 
	Z(A \setminus (\rho_0 \cup \gamma)).
	\end{split}
	\]
	We now use Proposition \ref{prop:lowerboundpart} with $c_1(\alpha)$ as in 
	\eqref{eq:c1} as well as \eqref{partition ratio} and find 
	\[
	Z(A \setminus (\rho_0 \cup \gamma)) \geq c_1^{|\rho_0|} 
	Z(A \setminus \gamma) \geq \frac{c_1^L}{2} \frac{Z(A) 
	Z(\Lambda_n \setminus \gamma)}{Z(\Lambda_n)}.
	\]
	Going back into \eqref{eq:rho_L}, and using 
	\eqref{partition ratio} again, we also have that 
	\[
	Z(A \setminus (\gamma \cup \rho)) \leq Z(A \setminus \gamma) \leq 
	2 \frac{Z(A) Z(\Lambda_n \setminus \gamma)}{Z(\Lambda_n)}.
	\]
	Together, this gives 
	\[
	\bbP_A^{y \to \ell_n}(\rho_L = \rho | \rho_L \subset B) \leq 
	4 \e{-\alpha(\| \rho \| - L)} c_1^L \frac
	{ 
	\sum_{\gamma \in \caL_A^{h \nearrow \ell_n}} \e{-\alpha \| \gamma \|} 
	Z(\Lambda_n \setminus \gamma)
	}
	{ 
	\sum_{\gamma \in \caL_A^{h_0 \nearrow \ell_n}} \e{-\alpha \| \gamma \|} 
	Z(\Lambda_n \setminus \gamma)
	}
	.
	\]
	The final quotient is equal to one by translation invariance. Since 
	$c_1(\alpha)$ converges to $1$ as $\alpha \to \infty$, by making 
	$\alpha_0$ even bigger if necessary we achieve 
	\[
	\bbP_A^{y \to \ell_n}(\rho_L = \rho) \leq \e{-\alpha ( \| \rho \| - 
	(1 + \delta/2) L )}
	\]
	for all $\alpha > \alpha_0$. Now we sum over all allowed lengths of 
	$\rho$ and find 
	\[
	\bbP(|\rho_L| > (1 + \delta)L | \rho_L \subset B) \leq 
	\sum_{m = (1+\delta)L}^\infty (2d)^m \e{-\alpha ( m - 
	(1 + \delta/2) L )} = \e{- L ( \alpha \delta / 2 - ( 1 +\delta) \log (2 d) )} 
	\sum_{m=0}^\infty \e{ - m (\alpha - \log (2d) )}.
	\]
	(The crude bound $(2d)^{m}$ on the number of all self-avoiding walks of 
	length $m$ could of course be improved using the connective
	constant, but in the present situation we do not gain any insight 
	from that.) Now by further increasing $\alpha_0$ if necessary, 
	we see that the claim holds with $c = \alpha_0 \delta/2 -  ( 1 +\delta)  \log (2d)$, 
	and $C = 1 / (1 - \e{-\alpha_0 + \log (2d)})$. 
\end{proof}

The next statement, which is purely deterministic, shows that 
proposition \eqref{fundamental lemma} is useful for obtaining lower bounds
on the number of pre-regeneration points.  

\begin{lemma}
\label{lem:numbregpoint}
Let $L \in \bbN$, $A \subset \Lambda_n$, $x,y \in A$ such that 
$\fc y = \fc x + L$, and $\gamma \in \caL_A^{x \to y}$. 
Assume that $|\gamma| < ( 1 + \delta) \, L $.  
Then $\gamma$ has at least $ ( 1 - 3 \delta)\, L$ pre-regeneration points.
\end{lemma}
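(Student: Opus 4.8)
The plan is to convert the step‑budget hidden in the hypothesis $|\gamma|<(1+\delta)L$ into a count of ``wasted'' steps, and then show that each wasted step can destroy only $O(1)$ potential pre‑regeneration points. First I would write $\gamma=(w_0,\dots,w_m)$ with $w_0=x$, $w_m=y$, $m=|\gamma|-1$, and split its steps into forward‑horizontal, backward‑horizontal ($h^-$ of them) and perpendicular ($v$ of them) steps. Since the net first‑coordinate displacement is $L$, we have $h^+-h^-=L$ and $h^++h^-+v=m<(1+\delta)L$, so $2h^-+v=m-L<\delta L$, hence $h^-<\tfrac\delta2 L$ and $v<\delta L$; the whole argument then amounts to charging every ``bad'' horizontal offset against these $h^-$ backward and $v$ perpendicular steps.

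Next I would note that a pre‑regeneration point is precisely an \emph{ascending} vertex $w_i$ — one with $\fc{w_j}<\fc{w_i}$ for all $j<i$ — that moreover satisfies the cone condition $w_j\in\caC_{w_i}$ for all $j\ge i$, and that among $w_1,\dots,w_m$ there is, for each offset $k\in\{1,\dots,L\}$, exactly one ascending vertex $w_{i_k}$ with $\fc{w_{i_k}}=\fc x+k$, because the running maximum of the first coordinate increases by exactly $1$ at each ascending vertex and passes through each of the values $\fc x+1,\dots,\fc x+L$; here $i_k$ is the time of the first visit of $\gamma$ to $\ell_{\fc x+k}$. So it suffices to bound by $3\delta L$ the number of \emph{spoiled} offsets $k$, i.e.\ those for which $w_{i_k}$ violates the cone condition, and I would classify spoiled $k$ into three (exhaustive) types: \textbf{(A)} the walk drops strictly to the left of $\ell_{\fc x+k}$ at some time $>i_k$; \textbf{(B)} not (A), but the walk makes a perpendicular step on $\ell_{\fc x+k}$ at some time $>i_k$; \textbf{(C)} neither, but the walk leaves $\caC_{w_{i_k}}$ within $\log n$ horizontal levels of $w_{i_k}$ (it cannot leave further out, since the cone is full width there). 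Type (A) is charged to backward steps: the number of offsets the walk ever drops below after first reaching them is at most the number of down‑crossings, which is $h^-$, so $\#(\mathrm A)\le h^-<\tfrac\delta2 L$. Type (B) is charged to perpendicular steps, each of which sits at a unique horizontal level, so $\#(\mathrm B)\le v<\delta L$.

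The main obstacle is type (C): naively, one perpendicular step could eject the walk from the cones of up to $\log n$ distinct ascending vertices, yielding only the useless bound $v\log n$. The resolution I would pursue uses that a type‑(C) offset is by definition \emph{not} of type (A) or (B), which forces the vertex after $w_{i_k}$ to be $w_{i_k}+e_1$; iterating, along any maximal block $\{k,k+1,\dots,k+r-1\}$ of consecutive offsets that are all free of types (A) and (B), the ascending vertices are consecutive vertices of $\gamma$ on a common horizontal line, so their forward cones $\caC_{w_{i_k}}\supseteq\caC_{w_{i_{k+1}}}\supseteq\cdots$ are \emph{nested}. For such a block the first exit time of $\gamma$ from $\caC_{w_{i_k}}$ is strictly monotone in $k$, the exit step is (by a short case check) always a perpendicular step leaving the cone boundary, and by nesting a single such exit step freshly spoils only the innermost relevant cone; hence the type‑(C) offsets are again charged injectively to perpendicular steps, so $\#(\mathrm C)\le v$ with room to spare. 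Adding up, the number of spoiled offsets is at most $h^-+2v<\tfrac\delta2 L+2\delta L<3\delta L$, giving the claimed $(1-3\delta)L$ pre‑regeneration points. I expect the only genuinely delicate bookkeeping to be ensuring that no perpendicular step is re‑used as an exit step across two different (A)/(B)‑free blocks that happen to lie within $\log n$ of each other, and it is precisely this slack that is absorbed by taking the constant $3$ rather than something closer to $1$.
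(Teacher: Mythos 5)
Your approach is genuinely different from the paper's. The paper constructs a sequence of marked points $(x_i, y_i, y_i', m_i)$ along $\gamma$ and decomposes the walk into alternating ``good runs'' of consecutive pre-regeneration points and ``bad segments'' (from a first failure $y_i$ until a fresh level crossing $x_{i+1}$); it then bounds the total length of the bad segments by noting that inside such a segment the walk is confined between two vertical hyperplanes and therefore cannot spend more than a bounded fraction of its steps moving horizontally. You instead work level by level: each of the $L$ intermediate levels has a unique ascending vertex, and you classify the ``spoiled'' levels into three types and try to charge each type injectively to backward or perpendicular steps. Both arguments rest on the same step budget coming from $|\gamma|<(1+\delta)L$, but the decomposition is different, and your type-(A)/(B) charging ($\#(A)\le h^-$, $\#(B)\le v$) is clean and correct.

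There is, however, a genuine gap in the bound on $\#(C)$. You claim that within an (A)/(B)-free block the ``exit step'' from a cone $\caC_{w_{i_k}}$ is always a perpendicular step. That is false. The exit step is any step along which the margin $\mu_k(z):=\fc z-|\sc z-\sc w_{i_k}|$ drops from $\ge \fc w_{i_{k+j}}$ to $<\fc w_{i_{k+j}}$, and a \emph{backward} step (decreasing $\fc$ by $1$ with $\sc$ fixed) decreases $\mu_k$ by exactly $1$; so if the walk, having already wandered off the block's axis via earlier perpendicular and forward moves, sits on the cone boundary with $\sc\neq\sc w_{i_k}$ and then steps backward, that backward step is the first exit. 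Concretely, in $d=2$ the segment $0,\,e_1,\,e_1+e_2,\,2e_1+e_2,\,2e_1+2e_2,\,e_1+2e_2,\dots$ first leaves $\caC_0$ at the backward step $2e_1+2e_2\to e_1+2e_2$, and offset $0$ is type (C) (not (A), since the walk never goes strictly left of level $0$; not (B), since no perpendicular step occurs at level $0$). Your strict monotonicity of the exit times in $k$ is fine (two consecutive cone thresholds cannot be crossed in one step), so the exit steps within a block are distinct, but since they may be backward as well as perpendicular you only get $\#(C)$ charged to backward-or-perpendicular steps, and those same backward steps may simultaneously be charged as down-crossings for some type-(A) offset. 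So the injectivity that underlies ``$\#(C)\le v$ with room to spare'' is not established, and the caveat you do flag (re-use of a perpendicular step across nearby blocks) is not the main problem. To repair the argument you would need either to show that backward exit steps are impossible after all (I do not see why), or to set up a single combined charging scheme for all three types to the multiset of non-forward steps without double-counting, and then re-check that the resulting constant is still at most $3$.
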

\begin{proof}
We recursively determine sequences of points in $\gamma$. We set $x_0 = x$.
For $i \geq 1$, we define 
\[
\begin{split}
	& y_i := \min \{ y \in \gamma: y \geq x_{i-1}, y 
	\text{ is not a pre-regeneration point} \}, \\
	& y_i' := \min \{ y \in \gamma: y > y_i, y \notin \caC_{y_i} \}, \\
	& m_i := \max \{ \fc x: x \in \gamma, x \leq y_i' \}, \\
	& x_{i+1} := \min \{ x \in \gamma: \fc x > m_i \}
\end{split}
\]
\begin{figure}
\centering
\includegraphics[scale=0.35]{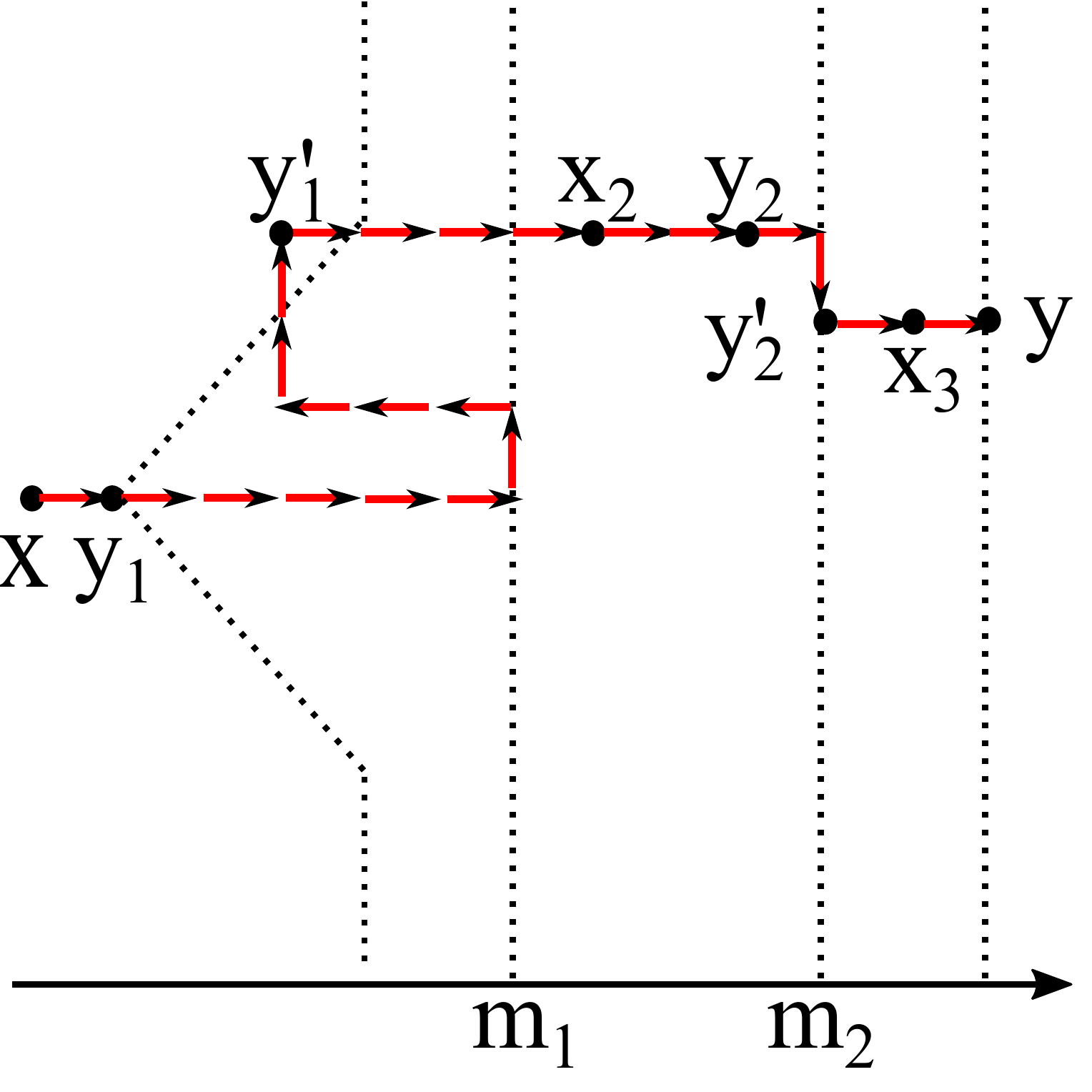}
\caption{A self-avoiding walk from $x$ to $y$.}
\label{Fig:pathandpoints}
\end{figure}
(see also Figure \ref{Fig:pathandpoints}).
Thus when following the path $\gamma$, we thus have the following situation: 
At each $x_i$, 
the path crosses the  hyperplane $\ell_{x_i}$ for the first time. 
Thereafter, we only see regeneration points until we hit $y_{i+1}$, although 
of course $y_{i+1} = x_i$ is possible. $y_{i+1}'$ is then the point where we 
actually notice that $y_{i+1}$ was not a pre-regeneration point (see also the 
following paragraph!). We then follow $\gamma$ further 
until we hit a vertical hyperplane that we have not seen before $y_{i+1}'$, 
and $x_{i+1}$ is the place where we cross that hyperplane. 

The crucial observation is that since 
$x_i$ is the first time a certain 
hyperplane is crossed, and since all points between $x_i$ and $y_i$ 
are pre-regeneration points, the reason that $y_i$ is not a pre-regeneration
point can only be that the path leaves the enhanced forward cone; the other 
possibility, namely that 
some previous part of the path has already crossed the hyperplane 
containing $y_i$, is ruled out. This means that indeed $y_i'$ is the place 
where we notice that $y_i$ was not a pre-regeneration point. It also means that 
between $y_i$ and $x_{i+1}$, the path stays strictly to the right of 
$\ell_{\fc y_i}$ and strictly to the left of $\ell_{\fc x_{i+1}}$. It is easy to 
see that in such a situation, at most a third of all the steps of the path 
can be 'to the right', i.e.\ between vertices $z, w$ with 
$\fc z = \fc w + 1$. Since $\gamma$ takes at least $L$ steps to the right 
there are at most $\delta L$ steps available for the other directions. Thus 
the union of all pieces of $\gamma$ that lie between some $y_i$ and the 
corresponding $x_{i+1}$ has less than $3 \delta L$ elements. Since all of the 
remaining at least $(1 - 3 \delta) L$ points are regeneration points by 
construction, the claim is proved.  
\end{proof}

The next step is to show that for sufficiently large $\alpha$, 
a pre-regeneration point is an actual regeneration point with uniformly 
positive probability. Recall that $\caR(\pi)$ denotes set of all 
regeneration points of $\pi$. 

\begin{proposition} 
\label{prop:uniformlyPositive}
	There exists $\alpha_0 < \infty$, $c > 0$ and $n_0 \in \bbN$ se that 
	for all $n \geq n_0$, $\alpha > \alpha_0$, $y \in \Lambda_n$, 
	$A \in \caA_y^w$, $\gamma_0 \in \caL_A^{y \to \ell_n}$, and 
	all pre-regeneration points $x \in \gamma_0$ 
	with $\fc x - \fc y \geq \log n$, we have 
	\[
	\bbP_A^{y \to \ell_n} (x \in \caR  | 
	\gamma = \gamma_0 ) > c .
	\]
\end{proposition}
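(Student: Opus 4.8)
The plan is to exhibit, with probability bounded below uniformly in all the parameters, an explicit set $R$ witnessing that $x$ is a regeneration point. First I would condition on $\{\gamma=\gamma_0\}$. By additivity of the energy, under this conditioning the restriction of $\pi$ to $A\setminus\gamma_0$ is distributed as $\bbP_{A\setminus\gamma_0}$; equivalently, the background permutation $\pi_0$ (which agrees with $\pi$ off $\gamma_0$ and fixes every vertex of $\gamma_0$) has the law of $\bbP_{A\setminus\gamma_0}$ extended by the identity on $\gamma_0$. Since $x$ is a pre-regeneration point with $\fc x\geq\fc y+\log n$ and $A\in\caA_y^w$, the slab $A^+:=\{z:\fc z\geq\fc x\}$ satisfies $\caC_x\subseteq A^+\subseteq A$, the sets $A^+$ and $\caC_x$ are invariant under the reflection $\phi$ of $\Lambda_n$ fixing the axis $\{\sc z=\sc x\}$, and $\caC_x$ meets the hyperplane $\ell_{\fc x}$ in the single vertex $x$, which lies on $\gamma_0$ and is therefore a fixed point of $\pi_0$.

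The candidate for $R$ is a $\pi_0$-invariant, $\Phi$-compatible set containing $\caC_x$, where $\Phi=\{\mathrm{id},\phi\}$; such a set is produced by the sampling strategy described before Proposition~\ref{prop:invarSubset1}, run inside $A^+$ with initial set $\caC_x$ (the bookkeeping around the forced path $\gamma_0$ is routine because its vertices are fixed points of $\pi_0$), and Proposition~\ref{prop:size of invariant set} guarantees that this construction a.s.\ terminates with a finite set. Any such $R$ automatically satisfies (R2) (it contains $\caC_x$, hence is strictly $x$-admissible), (R3) ($\Phi$-compatibility), and (R4) ($\pi_0$-invariance), and the existence of one such $R$ is exactly what makes $x$ a regeneration point. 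It therefore remains only to check (R1), i.e.\ $R\subseteq A^+$, with probability at least (say) $\tfrac12$ once $\alpha$ and $n$ are large.

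Here the structural point is that $\phi$ preserves the first coordinate, so the symmetrisation steps of the iterative construction never create a vertex below level $\fc x$, and each set produced by the construction is a union of genuine cycles of $\pi_0$. Consequently, if $R\not\subseteq A^+$ then (looking at the first step at which a vertex below $\fc x$ appears) there is a connected union $\Gamma$ of nontrivial cycles of $\pi_0$ and their $\phi$-images that links $\caC_x$ to the hyperplane $\ell_{\fc x-1}$; since the level-$(\fc x+j)$ slice of $\caC_x$ has $\caC_x$-radius $j$ and the level-$\fc x$ slice is the single fixed point $x$, such a $\Gamma$ has size $\gtrsim 2(j+1)$ if it meets $\caC_x$ at level $\fc x+j$. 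I would then bound the probability of such a $\Gamma$ by a Peierls/cluster estimate: by iterating Proposition~\ref{prop:estimategamma}, a prescribed disjoint family of cycles of total size $s$ occurs under $\bbP_{A\setminus\gamma_0}$ with probability at most $\e{-\alpha s}$; the number of connected unions of (cycles and $\phi$-images of cycles) of size $s$ through a fixed vertex is at most $(Cd)^s$; and there are $O(j^{d-1})$ admissible contact vertices at level $\fc x+j$ for $j<\log n$, and $O(n^{d-1})$ of them for $j\geq\log n$, in which latter case necessarily $s\geq 2\log n$. Summing over $j$ and $s$, and using that for large $\alpha$ Theorem~\ref{theo1} makes the exponential rate of cycle lengths as large as we wish, the total tends to $0$ as $\alpha\to\infty$ uniformly in $n\geq n_0$, hence is below $\tfrac12$; on the complement $R$ satisfies (R1)--(R4), giving the claim with $c=\tfrac12$.

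The step I expect to be the main obstacle is the deterministic assertion in the last paragraph: verifying carefully that a violation of (R1) forces a connected chain of short cycles (and reflected cycles) reaching below $\ell_{\fc x}$. This requires tracking connectivity through the alternating orbit/reflection steps of the construction and using that reflections never lower levels; a secondary, genuine point is making the Peierls bound uniform in $n$, which is where the logarithmic widening of $\caC_x$ (the $j\geq\log n$ tail) has to be absorbed.
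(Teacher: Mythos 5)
You take a genuinely different route from the paper: you grow the candidate regeneration set outward from $\caC_x$, whereas the paper grows the \emph{complement} $Q_{\ell_x^-}$, the minimal $\pi_0$-invariant, $\Phi$-compatible set containing $\ell_x^- = \{z \in A : \fc z < \fc x\}$, and observes that a regeneration set exists iff $Q_{\ell_x^-} \cap \caC_x = \emptyset$ (in which case $R(x,\pi) = A \setminus Q_{\ell_x^-}$). The two characterizations are equivalent by minimality, but the paper's direction buys something concrete: decomposing $Q_{\ell_x^-} = \bigcup_z Q_{\{z\}}$ over singletons $z$ on the hyperplane $\ell_{\fc x - 1}$, it can feed each $Q_{\{z\}}$ directly into Propositions~\ref{prop:size of invariant set} and \ref{prop:exponential bound} with bounded (size one) initial population, and the union bound over $O(n^{d-1})$ boundary vertices closes as soon as the exponential rate $\nu$ exceeds $d-1$. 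Your growth starts from the macroscopic set $\caC_x$, so the prepared Galton--Watson machinery, with initial population $\gg n^{d-1}$, tells you nothing useful; your hand-rolled Peierls estimate over chains $\bigcup_i (C_i \cup \phi(C_i))$ is a sensible replacement (the cost $\e{-\alpha s}$ for a prescribed cycle family does follow by telescoping Proposition~\ref{prop:estimategamma} over the partition-function ratios, and the geometry of $\caC_x$ does supply $s \gtrsim j$ at contact level $\fc x + j$), but it essentially re-derives ad hoc the comparison machinery the paper has already packaged. Two points you should fix in a real write-up: ``run inside $A^+$'' is a slip --- if you literally sample inside $A^+$ the output set is invariant for a permutation drawn from $\bbP_{A^+\setminus\gamma_0}$, not for $\pi_0 \sim \bbP_{A\setminus\gamma_0}$; you must run the iterative sampling on all of $A\setminus\gamma_0$ and only then test whether $\hat A\subseteq A^+$. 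And the deterministic ``connected chain'' claim is real work, exactly as you flag: it holds because $\phi$ preserves the first coordinate (so any path in the extended graph spanning levels $[\fc x - 1,\fc x + j]$ uses at least $j+1$ lattice steps), and because consecutive blocks $C_i\cup\phi(C_i)$ and $C_{i+1}\cup\phi(C_{i+1})$ share a vertex through the $K_{B} = B\cap\Phi(B^c)$ bookkeeping, but this must be traced carefully.
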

\begin{proof}
	Since $A$ is weakly admissible and $\fc x - \fc y \geq \log n$, we have 
	$\{ z \in \Lambda_n: \sc z = \sc x \} \subset A$. Also, let us write 
	$\mathsf R(x,\pi)$ for the set of all subsets $R$ of $A$ that satisfy 
	properties (R1) to (R4) of regeneration sets. Since $x$ is a 
	pre-regeneration point of $\gamma_0$, we have $\Orb_\pi(x) \subset 
	\caC_x$ and $\gamma_0 \setminus \Orb_\pi(x) \subset 
	\{z: \fc z < \fc x \}$
	for all $\pi$ such that $\gamma(\pi) = \gamma_0$. Since on the other hand 
	$R \supset \caC_x$ for all $R \in \mathsf R(x,\pi)$, we have 
	\[
	\bbP_A^{y \to \ell_n} (x \in \caR | \gamma = \gamma_0 ) = 
	\bbP_{A \setminus \gamma_0} (\mathsf R(x, . ) \neq \emptyset ).
	\]	
	Let $\Phi$ be the group of reflections that leave the line $\{z: 
	\sc z = \sc x\}$ invariant, and for $\pi \in \caS_{A \setminus \gamma_0}$
	and $B \subset A$ recall that  $Q_B(\pi)$ is the minimal 
	$\pi$-invariant, 
	$\Phi$-compatible subset of $A$ that contains $B$. With  
	$\ell_x^- := \{ z \in A: \fc z < \fc x \}$, we then have 
	$\mathsf R(x,\pi) \neq \emptyset$ if and only if 
	$Q_{\ell_x^-}(\pi) \cap \caC_x = \emptyset$, and in this case 
	$R(x,\pi) = A \setminus Q_{\ell_x^-}(\pi)$. We can thus use Proposition 
	\ref{prop:size of invariant set}: we have $Q_{\ell_x^-}(\pi) \subset 
	\bigcup_{z: \fc z = \fc x - 1 } Q_{\{z\}}(\pi)$, and thus 
	\[
	\bbP_{A \setminus \gamma_0} (Q_{\ell_x^-} \cap \caC_x = \emptyset )
	\leq \sum_{z: \fc z = \fc x - 1} \bbP_{A \setminus \gamma_0} 
	(Q_{\{z\}} \cap \caC_x = \emptyset) 
	\leq \sum_{z: \fc z = \fc x - 1 } \bbP_{A \setminus \gamma_0} 
	(|Q_{\{z\}}| > \min \{ | \sc z  - \sc x |, \log n \} ). 	
	\]
	Pick $\tilde c < 1$ and let $\nu > d-1$ be so large that 
	\[
	\sum_{k=1}^\infty \e{- \nu k} | \{ z \in \Lambda_n: 
	\fc z = \fc x-1, k-1 \leq | \sc z - \sc x | < k \} | \leq \tilde c.
	\]
	Theorem \ref{theo1} and Propositions \ref{prop:size of invariant set} 
	and \ref{prop:exponential bound} guarantee that we can choose $\alpha_0$
	large enough so that for all $\alpha > \alpha_0$, all $z$ with 
	$\fc z = \fc x - 1$ and all $k \geq 1$, we  have that
	\[
	\bbP_{A \setminus \gamma_0} ( | Q_{\{z\}} | > k) \leq \e{- \nu k}.
	\]
	Then 
	\begin{equation} \label{no long things}
	\bbP_{A \setminus \gamma_0} (Q_{\ell_x^-} \cap \caC_x = \emptyset) 
	\leq \tilde c + \sum_{z: \fc z = \fc x-1} \bbP_{A \setminus \gamma_0}(|Q_{\{z\}}| > \log n) 
	\leq \tilde c + n^{d-1} n^{-\nu} \stackrel{n \to \infty}{\longrightarrow}
	\tilde c.
	\end{equation}
	This shows that for sufficiently large $n_0$ and all $n > n_0$, as well 
	as for all $\alpha > \alpha_0$, we have $\bbP_{A \setminus \gamma_0} 
	(\caR(x,.) \neq \emptyset) \geq (1 - \tilde c)/2 > 0$. 	
\end{proof}

We now have all the preparations in place to give the proof of Proposition
\ref{moment bounds} and thus conclude the proof of Theorem \ref{theo2}. 
Clearly, it suffices to prove the statement of Proposition 
\ref{moment bounds} only for large enough $k$.  
We fix $\delta < 1/4$, and for $k \geq 1$ and $L = k \log n$ we decompose
\begin{equation}
\label{final decomposition}
\begin{split}
\bbP_A^{y \to \ell_n} (|X_1 - y| > k \log n | \gamma \in \caC_y) 
\leq & \,\, \bbP_A^{y \to \ell_n} ( | \rho_L | \geq (1 + \delta) L | 
\rho_L \in \caC_y ) +\\ 
& + \bbP_A^{y \to \ell_n} ( |X_1 - y| > k \log n, |\rho_L| 
< (1 + \delta)L | \rho_L \in \caC_y).
\end{split}
\end{equation}
Here we used that $\rho_L \in \caC_y$ is equivalent to $\gamma \in \caC_y$ 
by the definitions of $\caC_y$ and $\rho_L$ and the fact that 
$L \geq \log n$. Since $\caC_y \in \caA_y$, we can apply Proposition 
\ref{fundamental lemma} and find 
\[
\bbP_A^{y \to \ell_n} ( | \rho_L | \geq (1 + \delta) L | \rho_L 
\in \caC_y) \leq C \e{- c k \log n} \leq C \e{- c k}.
\]
uniformly in $k$ for all large enough $\alpha$ and $n$. 

It remains to estimate the second term in \eqref{final decomposition}. 
For this, we use the trivial equality 
\[
\begin{split}
& \bbP_A^{y \to \ell_n} ( |X_1 - y| > k \log n, |\rho_L| 
< (1 + \delta)L | \rho_L \in \caC_y) \\ 
& = \frac{
\sum_{\rho \subset \caC_y: |\rho| < (1 + \delta) L} 
\bbP_A^{y \to \ell_n} (|X_1 - y| > K \log n | \rho_L = \rho) 
\bbP_A^{y \rightarrow \ell_n}(\rho_L = \rho)
}{
\sum_{\rho \subset \caC_y} \bbP_A^{y \rightarrow \ell_n}(\rho_L = \rho)
}
\end{split}
\]
in order to see that the claim will be shown once we prove that there exists
$\tilde C < \infty$ such that uniformly in $\eps, n$ and $\rho \subset A$ 
such that $|\rho| < (1 + \delta) L$ and such that  
$\rho$ is the trace of some self-avoiding walk from $y$ to some element of 
$\ell_{\fc y + L}$, we have 
\begin{equation}
\label{last thing to prove}
	\bbP_A^{y \to \ell_n} (|X_1 - y| > K \log n | \rho_L = \rho) 
	\leq \tilde C k^{-p}	.
\end{equation}
Lemma \ref{lem:numbregpoint} tells us that such a $\rho$ must have at least 
$(1 - 3 \delta) k \log n \geq \frac14 k \log n$ pre-regeneration points. 
For $k>8$, at least $(k/4 - 1) \log n \geq \frac{k}{8} \log n$ of them 
satisfy $\fc x - \fc y \geq 
\log n$. If any of these is a regeneration point, then $X_1$ is the minimal 
such point, and thus $|X_1 - y| \leq k \log n$. 

Let therefore $\rho$ be the trace of some self-avoiding walk from $y$ to 
an element of $\ell_{\fc y + L}$, assume that it has $M > \log n$ 
pre-regeneration points that have horizontal distance larger than 
$\log n$ from $y$, and denote these points by $(x_i)_{i \leq M}$. 
By Proposition \ref{prop:uniformlyPositive}, each $x_i$ is a regeneration 
point with uniformly positive probability. If the events 
$\{\pi: x_i \in \caR(\pi)\}$ would be independent, this would 
already conclude the proof, as we have many of these points. 
Unfortunately, they are not independent, and so we will have to work harder. 
(Although we can not prove it, it is actually reasonable to conjecture that 
the events $\{ x_i \notin \caR \}$ are  
positively correlated since the fact that $x_i$ is a pre-regeneration point
but not a regeneration point means that there is some long cycle of $\pi$
that prevents the existence of a regeneration surface; such a long cycle 
may still impact the next pre-regeneration point as well.)

The solution is to separate pre-regeneration points by invariant sets. 
Let $r_1 = x_1$, and
\[
r_j = \min \{ x_j: \fc x_j > \fc r_{j-1} + 2 \log n \}.
\]
Since we started with at least $\frac18 k \log n$ pre-regeneration points
$(x_i)$, and since at most $2 \log n$ of them are between two consecutive 
$r_i$, we still retain at least $\tilde k \geq k/16$ points $(r_i)_{i \leq 
\tilde k}$. For $\pi \in \caS_{A \setminus \gamma}$, denote again 
by $\mathsf R(x,\pi)$ the set of all subsets of $A$ satisfying properties 
(R1) to (R4). With  
\[
\caN(x) := \{ \pi \in \caS_{A \setminus \gamma}: \mathsf R(x,\pi) = 
	\emptyset \}, \quad \caN_i := \caN(r_i), 
\]
for any $\gamma \subset A$ so that 
$\rho_L(\gamma) = \rho$, we then have 
\[
\bbP_A^{y \to \ell_n} (|X_1 - y| > K \log n | \rho_L = \rho) = 
\bbP_{A \setminus \gamma} \Big( \bigcap_{j\geq 1} \caN(x_j) \Big) 
\leq \bbP_{A \setminus \gamma} \Big( \bigcap_{i = 1}^{\tilde k} 
\caN_i \Big).
\]
We further define 
$w_i := \fc r_i + \log n$, and 
\[
\ell_{w_i}^- := \{z \in A: \fc z < \fc w_i \}, \qquad 
Q_i := \Orb_\pi (\ell_{w_i}^- \cap (A
\setminus \gamma)), \qquad 
W_i := \{z \in A: \fc z \geq \fc w_{i} + \log n \}.
\]
Let $m \leq \tilde k$. We decompose, 
\[
\bbP_{A \setminus \gamma} \Big( \bigcap_{i \geq 1}^m \caN_i \Big) 
\leq \bbP_{A \setminus \gamma}(Q_{m-1} \cap W_{m-1} \neq \emptyset) 
+ \bbP_{A \setminus \gamma}\Big( Q_{m-1} \cap W_{m-1} = \emptyset,
\bigcap_{i \geq 1}^m \caN_i \Big). 
\]
By the same reasoning that led to the bound on the second term in 
\eqref{no long things}, the first term 
above is bounded by $n^{d-1-\nu}$, where we can choose $\nu$ as large as 
needed by making $\alpha$ large. For the second term we use the strong 
Markov property of SRP. Let $f(\pi) = \bbone\{\pi \in \caN_m\}$, and note 
that $\bigcap_{i=1}^{m-1} \caN_i \in \caF_{Q_{m-1}}$ by the definition of the
$Q_i$ and the strict admissibility of regeneration sets. Thus 
\[
\bbP_{A \setminus \gamma} \Big(Q_{m-1} \cap W_{m-1} = \emptyset, \bigcap_{i 
\geq 1}^m \caN_i \Big) = \bbE_{A \setminus \gamma} \Big( 
\bbE_{A \setminus \gamma} \big( f   \big| \caF_{Q_{m-1}} \big) 
\bbone \{ Q_{m-1} \cap W_{m-1} = \emptyset 
\} \bbone \big\{ \bigcap_{i=1}^{m-1} \caN_i \big\} 
\Big),
\]
and Proposition \ref{prop:strong Markov} gives 
\[
\bbE_{A \setminus \gamma} \big( f   \big| \caF_{Q_{m-1}} \big) 
\bbone \{ Q_{m-1} \cap W_{m-1} = \emptyset 
\} = \bbP_{Q_{m-1}}(\caN_m)  \bbone \{ Q_{m-1} \cap W_{m-1} = \emptyset 
\}
\]
almost surely. By Proposition \ref{prop:uniformlyPositive} we find 
$\bbP_{Q_{m-1}(\pi)}(\caN_m) \leq 1-c$ uniformly in all allowed 
$Q_{m-1}(\pi)$ (note that by definition, $Q_{m-1}(\pi)$ is weakly 
admissible for all $\pi$ such that 
$Q_{m-1}(\pi)  \cap W_{m-1} = \emptyset$), and thus we conclude 
\[
\bbP_{A \setminus \gamma} \Big( \bigcap_{i=1}^m \caN_i \Big) 
\leq (1-c) \bbP_{A \setminus \gamma} \Big( 
\bigcap_{i=1}^{m-1} \caN_i \Big) + n^{d-1-\eta},
\]
for all $m \leq \tilde k$. Therefore, by induction, 
\[
\bbP_{A \setminus \gamma} \Big( \bigcap_{i=1}^{\tilde k} \caN_i \Big) 
\leq \sum_{j=0}^\infty (1-c)^j n^{d-1-\eta} = \frac{n^{d-1-\eta}}{c}.
\]
It remains to choose $\alpha_0$ so large that for $\alpha > \alpha_0$, 
we have $d - 1 - \eta < -p$. Then for sufficiently large $n_0$ and all 
$n > n_0$, we have 
\[
\bbP_A^{y \to \ell_n} ( |X_1 - y| > k \log n | \gamma \in \caC_y) 
\leq C \e{-c k} + \frac1c n^{-p}.
\]
For $k \leq n$, this proves the claim of Proposition \ref{moment bounds}; 
the last step in the proof consist in observing that for $k>n$, the 
required probability is trivially equal to zero when $k > d$. 
Thus Proposition \ref{moment bounds}, and thus Theorem \ref{theo2}, is 
proved. 

Let us finish this paper by looking back over the proof and 
identifying the place where our estimates are not good enough 
to provide diffusive scaling. The most basic place where this 
happens is Proposition \ref{fundamental lemma}: here already, 
we can only prove a bound on the length of $\rho_L$ (and thus 
many regeneration points) when $L$ is of the order of $\log n$. 
This in turn is due to the inequality \eqref{partition ratio}, 
which needs the separation of the part of $\gamma$ to the right of 
$\ell_{\fc h}$ from the set $\Lambda_n \setminus A$ by at least 
a distance of $\log n$. This is because \eqref{partition ratio} relies 
on Proposition \ref{prop:decay of boundary 2}, and in this result 
the sum that leads to the constant $D$ needs to be controlled by
sufficinetly large distances between $A$ and $V_0 \setminus V_1$, 
especially when the number of points in either of them diverges. 
The rood cause of the problem is that while we do have exponential 
decay of correlations, the decay is not uniform in the size of the 
sets between which the correlations are measured. See also the 
discussion in the paragraph before Proposition 
\ref{prop:decay of boundary 2}. The latter proposition 
already goes some way towards solving this 
problem, but as it turns out it is not quite enough for obtaining 
optimal results.

\section*{Appendix}
\label{sect:Appendix}
Given two finite or countable infinite sequences of 
non-negative integer-valued random variables,
$\left(  M^i_n \right)_{n \in I \subset \mathbb{N}}$, $i= 1,2$,
we say that 
$\left(  M^2_n \right)_{n \in I }$
is stochastically larger than
$\left(  M^1_n \right)_{n \in I }$
and we write
$$
\left(  M^1_n \right)_{n \in I } \overset{d}{\preceq}  \left(  M^1_n \right)_{n \in I },
$$
if for any finite sequence of non-negative integers $n_0$, $n_1$, $\ldots$, $n_k$,
we have that,
$$
P_1(M^1_0 > n_0, \, M^1_1 > n_1,\,  \ldots M^1_k > n_k) \leq P_2(M^2_0 > n_0,\, M^2_1 > n_1,\,  \ldots M^2_k > n_k),
$$
where $P_i( \, \cdot \ )$ is the law of the $i$-th sequence, $i=1,2$.
We are now ready to state our Comparison Lemma.
\begin{lemma}[\textbf{Comparison Lemma}]
\label{lem:comparisonlemma}
Let $\left(  M^i_n \right)_{n \in \mathbb{N} }$, $i \in \{1,2\}$,  be two non-negative,
integer valued stochastic processes, defined in two different probability spaces,
$( \Sigma_1, \mathcal{F}^1, P_1)$ and 
$(\Sigma_2, \mathcal{F}^2, P_2)$.
Let $\left(\mathcal{F}_n^i\right)_{n \in \mathbb{N}}$, $i \in \{1,2\}$ be their filtrations. Assume
that $\left(  M^2_n \right)_{n \in \mathbb{N} }$ is a Markov chain 
and let $\left(  M^{2, \lambda}_n \right)_{n \in \mathbb{N} }$ be used
to denote the Markov chain with initial state $M^2_0$ distributed according
to  the probability measure on the non-negative integers $\lambda( \, \cdot \,)$. 
Assume that 
\begin{itemize}
\item (\textbf{\textit{Ass. 1}}) The Markov chain is such that,
$$ \lambda  \overset{d}{\preceq} \lambda^{\prime} \implies \left(  M^{2, \lambda}_n \right)_{n \in \mathbb{N} } \overset{d}{\preceq}  \left(  M^{2, \lambda^{\prime}}_n \right)_{n \in \mathbb{N} } $$
\item   (\textbf{\textit{Ass. 2}})  For all integers $\ell,  m, n 	\geq 0$,  
and $\omega \in \Sigma^1$ such that $M^1_{n}(\omega)=m$,
$$  P_1( M^1_{n+1} > \ell \bigm| \mathcal{F}_{n}^1)(\omega) \leq  P_2( M^{2}_{n+1} > \ell \Large |  M^{2}_{n}=m).$$
\item (\textbf{\textit{Ass. 3}}) For a given initial distribution $\lambda^{*} ( \, \, \cdot \, \, )$,
$$
 M^1_0   \overset{d}{\preceq}   M^{2, \lambda^{*}}_0.
$$
\end{itemize}
Then, we have that 
\begin{itemize}
\item (\textbf{\textit{Concl. a}})  
The Markov chain $(M^{2, \lambda^*}_n)_{n \in \mathbb{N}}$ is stochastically larger than $(M^1_n)_{n \in \mathbb{N}}$,
$$ (M^1_n)_{n \in \mathbb{N}}   \overset{d}{\preceq} (M^{2, \lambda^{*}}_n)_{n \in \mathbb{N}},$$
\item (\textbf{\textit{Concl. b}}) For all $k \in \mathbb{N}$, 
$$  \sum\limits_{j=0}^{k} M^1_j  \overset{d}{\preceq}  \sum\limits_{j=0}^{k} M^{2, \lambda^{*}}_j.$$ 
\end{itemize}
\end{lemma}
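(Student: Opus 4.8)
The plan is to prove, by induction on $k$, the following statement that is slightly stronger than what is asked: for every $k\ge 0$ the vector $(M^1_0,\dots,M^1_k)$ is dominated by $(M^{2,\lambda^{*}}_0,\dots,M^{2,\lambda^{*}}_k)$ in the usual multivariate stochastic order, i.e.\ $\mathbb E_1\big[f(M^1_0,\dots,M^1_k)\big]\le\mathbb E_2\big[f(M^{2,\lambda^{*}}_0,\dots,M^{2,\lambda^{*}}_k)\big]$ for every bounded coordinatewise non-decreasing $f:\{0,1,2,\dots\}^{k+1}\to\bbR$. Both conclusions are then immediate: Conclusion (a) is the special case where $f=\mathbbm 1\{a_0>n_0,\dots,a_k>n_k\}$ is the indicator of an upper orthant, and Conclusion (b) is the special case $f(a_0,\dots,a_k)=\mathbbm 1\{a_0+\dots+a_k>\ell\}$; in both cases $f$ is coordinatewise non-decreasing.

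\textbf{Ingredients.} Two simple facts are needed. First, writing $K(m,\cdot)$ for the one-step transition law of the Markov chain $(M^2_n)$ from state $m$, Assumption 1 applied to the point-mass initial laws $\delta_m\overset d\preceq\delta_{m'}$ (valid for $m\le m'$) and read at a single time step gives that $K$ is \emph{stochastically monotone}: $K(m,\cdot)\overset d\preceq K(m',\cdot)$ whenever $m\le m'$ (for the degenerate state $m=0$ this is the trivial domination $\delta_0\overset d\preceq K(m',\cdot)$, so nothing is lost). Second, the elementary fact that if $\mu\overset d\preceq\nu$ are laws on the non-negative integers and $\varphi$ is bounded non-decreasing, then $\int\varphi\,d\mu\le\int\varphi\,d\nu$. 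The base case $k=0$ of the induction is exactly Assumption 3, since $M^{2,\lambda^{*}}_0$ has law $\lambda^{*}$ and for single integer-valued variables the stochastic order is the condition $P(X>n)\le P(Y>n)$ for all $n$.

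\textbf{Inductive step.} Fix a bounded coordinatewise non-decreasing $f$ on $\{0,1,\dots\}^{k+1}$ and condition on $\mathcal F^1_{k-1}$. By Assumption 2, on $\{M^1_{k-1}=m\}$ we have $P_1(M^1_k>\ell\mid\mathcal F^1_{k-1})\le K(m,\{\ell+1,\ell+2,\dots\})$ for all $\ell$; that is, the (regular) conditional law of $M^1_k$ given $\mathcal F^1_{k-1}$ is dominated by $K(M^1_{k-1},\cdot)$. Since $M^1_0,\dots,M^1_{k-1}$ are $\mathcal F^1_{k-1}$-measurable and $b\mapsto f(M^1_0,\dots,M^1_{k-1},b)$ is non-decreasing, the elementary fact above yields
\[
\mathbb E_1\big[f(M^1_0,\dots,M^1_k)\big]\ \le\ \mathbb E_1\big[\tilde f(M^1_0,\dots,M^1_{k-1})\big],\qquad
\tilde f(a_0,\dots,a_{k-1}):=\int f(a_0,\dots,a_{k-1},b)\,K(a_{k-1},db).
\]
The crucial point is that $\tilde f$ is again bounded and coordinatewise non-decreasing: in $a_0,\dots,a_{k-2}$ this is inherited from $f$; in $a_{k-1}$, for $a_{k-1}\le a_{k-1}'$ one has $f(\dots,a_{k-1},b)\le f(\dots,a_{k-1}',b)$ pointwise, and then $\int f(\dots,a_{k-1}',b)\,K(a_{k-1},db)\le\int f(\dots,a_{k-1}',b)\,K(a_{k-1}',db)$ because $b\mapsto f(\dots,a_{k-1}',b)$ is non-decreasing and $K(a_{k-1},\cdot)\overset d\preceq K(a_{k-1}',\cdot)$. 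Applying the induction hypothesis to $\tilde f$ and then the Markov property of $(M^{2,\lambda^{*}}_n)$,
\[
\mathbb E_1\big[\tilde f(M^1_0,\dots,M^1_{k-1})\big]\ \le\ \mathbb E_2\big[\tilde f(M^{2,\lambda^{*}}_0,\dots,M^{2,\lambda^{*}}_{k-1})\big]=\mathbb E_2\big[f(M^{2,\lambda^{*}}_0,\dots,M^{2,\lambda^{*}}_k)\big],
\]
which closes the induction.

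\textbf{Main obstacle.} This argument is essentially routine; the only step carrying real content is the verification that $\tilde f$ remains coordinatewise non-decreasing after integration against the transition kernel, and this is precisely where Assumption 1 (stochastic monotonicity of the chain, hence of $K$) is indispensable — an arbitrary, non-monotone, kernel would break it and the induction could not be closed. A secondary point to keep straight is that $(M^1_n)$ is \emph{not} assumed Markov, so Assumption 2 must be used in its filtration form, as a bound on the conditional law of $M^1_k$ given $\mathcal F^1_{k-1}$, rather than as a genuine transition kernel; everything else (the one-dimensional comparison lemma, the base case, the passage from the multivariate order to Conclusions (a) and (b)) is immediate. An equivalent alternative would be to build a monotone coupling of the two processes step by step, at each stage applying a quantile coupling to the successive conditional laws, but the inductive argument above has the advantage of not invoking the Ionescu–Tulcea construction.
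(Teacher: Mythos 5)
Your proof is correct and follows the same underlying idea as the paper (backward replacement, one coordinate at a time, propagating the domination forward in time using the monotonicity of the Markov kernel), but it is formulated in a cleaner and more unified way. The paper carries out the backward replacement twice, once verifying the inequality on upper-orthant events $\{a_0>n_0,\dots,a_k>n_k\}$ for Conclusion (a) and once on half-space events $\{a_0+\dots+a_k>n\}$ for Conclusion (b); you instead prove the multivariate stochastic order in the strong sense (domination of $\mathbb E[f]$ for every bounded coordinatewise non-decreasing $f$) by a single induction, and derive both conclusions as special cases. This also isolates precisely where Assumption 1 is needed: in showing that $\tilde f(a_0,\dots,a_{k-1})=\int f(a_0,\dots,a_{k-1},b)\,K(a_{k-1},db)$ remains coordinatewise non-decreasing, which is exactly the stochastic monotonicity of the one-step kernel. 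Your observation that the paper's Conclusion (b) does not reduce to Conclusion (a) is implicitly the same reason the paper has to repeat the argument; your version makes this explicit.

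One small imprecision is worth fixing. You claim to extract $K(m,\cdot)\overset d\preceq K(m',\cdot)$ from Assumption 1 applied to $\delta_m\overset d\preceq\delta_{m'}$ ``read at a single time step,'' and your parenthetical for $m=0$ replaces $K(0,\cdot)$ by $\delta_0$, which is not what the general hypotheses give (it happens to be true for the Galton--Watson application, where state $0$ is absorbing, but the lemma is stated abstractly). With the paper's literal definition of $\overset d\preceq$ as an upper-orthant order over times $0,1,\dots,k$, the domination of the time-$1$ marginal is only recoverable from the joint statement by choosing $n_0=m-1$, which requires $m\geq 1$; for $m=0$ the orthant probabilities on the left vanish and yield nothing. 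The way out is simply to read $\overset d\preceq$ as the full multivariate stochastic order (domination for all bounded coordinatewise non-decreasing test functions), in which case taking $f(a_0,a_1)=\mathbbm 1\{a_1>\ell\}$ gives the kernel monotonicity for all $m\geq 0$. This stronger reading is in any case forced by the lemma itself: the paper's own proof of Conclusion (b) applies Assumption 1 to half-space events of the form $\{a_0+a_1>n\}$, which an upper-orthant domination alone would not control. So your proof is sound under the interpretation of $\overset d\preceq$ that the lemma implicitly requires; just replace the $\delta_0$ aside by the observation that the multivariate reading of Assumption 1 already yields $K(0,\cdot)\overset d\preceq K(m',\cdot)$ directly.
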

\begin{proof}
Let us denote by $\overline{P}( \, \cdot \, ) = P_1( \, \cdot \, ) \times P_2( \, \cdot \, )$ and by $\overline{E}( \, \cdot \, )$ the expectation
of $\overline{P}( \, \cdot \ )$.
We start by proving the statement (a) of the lemma. Fix a finite sequence of non-negative integers,
$n_1$, $n_2$, $\ldots$, $n_k$. We replace steps of the process $1$ by steps of the 
process $2$ one by one, showing that the probability of the event $\mathbbm{1} \{ \,  M^1_k > n_k, M^1_{k-1} > n_{k-1}, \ldots M^1_0 > n_0 \, \}$
cannot decrease at each replacement. Here is our first replacement,
\begin{multline}
P_1(M^1_k > n_k, M^1_{k-1} > n_{k-1}, \ldots M^1_0 > n_0)   =  \\
\overline{E} \, \Big( \mathbbm{1} \{ M^1_k > n_k, M^1_{k-1} > n_{k-1}, \ldots M^1_0 > n_0 \} \Big) =  \\ 
\overline{E} \, \Big( \overline{E} (\mathbbm{1} \{ M^1_k > n_k\} \, \big  | \, \mathcal{F}^1_{k-1}) \, \,  \mathbbm{1}\{  M^1_{k-1} > n_{k-1}, \ldots M^1_0 > n_0 \} \Big)    \leq  \\
\overline{E} \, \Big( \overline{E} (\mathbbm{1} \{ M^2_1 > n_k\}  \, \big |  \, M^2_{0} = M^1_{k-1}) \, \,  \mathbbm{1}\{ M^1_{k-1} > n_{k-1}, \ldots M^1_0 > n_0 \} \Big)  = \\
\overline{E} \, \Big( \mathbbm{1} \{ M^{2, \delta_{M^1_{k-1}}}_1 > n_k, M^1_{k-1} > n_{k-1}, \ldots M^1_0 > n_0 \} \Big),
\end{multline}
where for the second equality we used the definition of conditional expectation and for the first inequality we used our second assumption in the statement of the lemma. In the last equality we denote by $M^{2, \delta_{M^1_{k-1}}}_1$ the first step of the Markov chain which
starts at time $0$ from the value $M^1_{k-1}$ (then, the outer expectation integrates over the $M^1_{k-1}$).
Here is our second replacement,
\begin{multline}
\overline{E} \, \Big( \mathbbm{1} \{ M^{2, \delta_{M^1_{k-1}}}_1 > n_k, M^1_{k-1} > n_{k-1}, \ldots M^1_0 > n_0 \} \Big) = \\
\overline{E} \, \Big( \overline{E} \big(\mathbbm{1} \{ M^{2, \delta_{M^1_{k-1}}}_1 > n_k, M^1_{k-1}> n_{k-1} \} \, \big | \, \mathcal{F}^1_{k-2} \big) \, \,  \mathbbm{1}\{ M^1_{k-2} > n_{k-2}, \ldots M^1_0 > n_0\} \Big)  \leq  \\
\overline{E} \, \Big( {E}^2 \big ( \mathbbm{1} \{ M^{2}_2 > n_k, M^2_{1}> n_{k-1} \} \, \big | \,  M^2_{0} = M^1_{k-2} \big) \, \,  \mathbbm{1}\{ M^1_{k-2} > n_{k-2}, \ldots M^1_0 > n_0 \} \Big) = \\
\overline{E}  \Big(\mathbbm{1} \{ M^{2, \delta_{M^1_{k-2}}}_2 > n_k, M^{2, \delta_{M^1_{k-2}}}_{1} > n_{k-1}, 
M^1_{k-2} > n_{k-2}, \ldots M^1_0 > n_0 \} \Big).
\end{multline}
In the expression on the left-hand side of the first inequality, 
$M^{2, \delta_{M^1_{k-1}}}_1$  can be interpreted as the step $n=1$ of the  Markov chain which starts from the initial state 
distributed according to $M^1_{k-1}$ conditioned on $\mathcal{F}^1_{k-2}$
(then, the outer expectation integrates over the $\mathcal{F}^1_{k-2}$).
The inequality holds  as we replace such an initial state for the Markov chain
by a new state that, by our second assumption, is stochastically larger, namely by $M^2_1$ conditioned on $M^2_0 = M^1_{k-2}$.
Then, our first assumption in the statement of the lemma
guarantees that the whole Markov chain (not only the step $1$, but also
the following steps) is stochastically larger.
After $k-1$ iterations we get the first inequality in the expression below.
\begin{multline}
P_1 \Big( M^1_k > n_k, M^1_{k-1} > n_{k-1}, \ldots M^1_0 > n_0 \Big) \leq   \\
\overline{E}  \Big(\mathbbm{1} \{ M^{2, \delta_{M^1_{0}}}_{k} > n_k, M^{2, \delta_{M^1_{0}}}_{k-1} > n_{k-1}, 
\ldots, M^{2, \delta_{M^1_{0}}}_{1} > n_{2},  M^{1}_0> n_0 \} \Big)
\leq   \\
\overline{E}  \left(\mathbbm{1} \{ M^{2, \lambda^{*}}_{k} > n_k, M^{2, \lambda^{*}}_{k-1} > n_{k-1}, 
\ldots, M^{2, \lambda^{*}}_{1} > n_{2},  M^{2, \lambda^{*}}_0> n_0 \} \right),
\end{multline}
For the last inequality we used our Assumptions 1 and 3.
Indeed, on the left-hand side of the second inequality we have the Markov chain
$(M^2_n)_{n \in \mathbb{N}}$
starting from initial state $M^{1}_0$,
on the right-hand side we have the same Markov chain
starting from initial state $M^{2, \lambda^{*}}_0$,
which is is stochastically larger than $M^{1}_0$
by our Assumption 3.

The proof of the part (b) of the lemma goes along the same replacement
procedure as in the part (a).
Let use define $W^{i}_k = \sum\limits_{n=0}^{k} M^{i}_k$, $i \in \{1,2\}$.
We just illustrate our  first and second replacements below.
\begin{equation}
\begin{split}
P_1 \left( W^1_k > n \right) & = E^1 \Big( \mathbbm{1}\{ W^1_{k-1} + M_k^1 > n \} \Big) \\ & =  
\overline{E} \, \Big(    \, \, \, E^1 \big(  \, \,  \mathbbm{1}\{ M^1_{k} > n - W^1_{k-1} \}  \, \,  \big | \, \,  \mathcal{F}^1_{k-1}, M^1_{k-1} \, \, \, \big) \, \, \Big)  \\ & \leq  \overline{E}\Big(    \, \,  \overline{E}\,  \big( \, \,  \mathbbm{1} \{ M^2_{1} > n - W^1_{k-1} \}  \, \, \, \big | \, \, \,    M^2_{0}  = M^1_{k-1}, \mathcal{F}^1_{k-1}\, \, \big)  \, \, \Big) \\ & = 
\overline{E} \Big( \, \,  \mathbbm{1} \{ M^{2, \delta_{M^1_{k-1}}}_1 + W^1_{k-1} + > n\}  \, \, \Big),
\end{split}
\end{equation}
where we used our Assumption 2 for the inequality.
Here is the second step, where we use our Assumptions 1 and 2.
\begin{equation}
\begin{split}
\overline{E} \Big( \, \,  \mathbbm{1} \{  M^{2, \delta_{M^1_{k-1}}}_1 + M^1_{k-1} +  W^1_{k-2} > n\}  \, \, \Big)  & = 
\overline{E} \Big( \, \, \overline{E} \big(  \, \mathbbm{1} \{  M^{2, \delta_{M^1_{k-1}}}_1 + M^1_{k-1}  > n - W^1_{k-2} \}  \,  \, \big |   \mathcal{F}^1_{k-2}, M_{k-2}^1\, \big) \,  \Big)   \\
& =\overline{E} \Big( \, \, \overline{E} \big(  \, \mathbbm{1} \{  M^{2, \delta_{M^1_{k-2}}}_2 + M^{2, \delta_{M^1_{k-2}}}_{1}  > n - W^1_{k-2}\}  \, \, \big |   \mathcal{F}^1_{k-2}, M_{k-2}^1  \big) \, \,  \Big)   \\ 
& \leq \overline{E} \Big( \, \,  \mathbbm{1} \{ M^{2, \delta_{M^1_{k-2}}}_2 + M^{2, \delta_{M^1_{k-2}}}_1 +  W^1_{k-2} > n\}  \, \, \Big).
\end{split}
\end{equation}
The proof of the statement (b) of our Comparison Lemma follows by iteration.
For the last step, we use our Assumption 3, the same as for the proof of our statement (a).
\end{proof}
The last result that we present in this section 
is that the simple Galton-Watson process is a Markov chain
satisfying the Assumption 1 on the process $(M^2_j)_{j \in \mathbb{N}}$ in Lemma \ref{lem:comparisonlemma}.
\begin{lemma}
\label{lem:stochdomgalton}
 If $\Xi$ and $\Xi^{\prime}$
are two independent random variables such that 
$ \Xi \overset{d}{\preceq} \Xi^{\prime}$,
then
$(Z^{\Xi}_j)_{j \in \mathbb{N}}  \overset{d}{\preceq}  (Z^{\Xi^{\prime}}_j)_{j \in \mathbb{N}} $,
where $(Z^{\Xi}_j)_{j \in \mathbb{N}}$ is a simple Galton-Watson with offspring
distribution $\xi$.
\end{lemma}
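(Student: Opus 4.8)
The plan is to prove the stochastic monotonicity of the Galton-Watson process in its offspring distribution by a coupling argument, combined with a check that this coupling respects the ``joint tail'' ordering $\overset{d}{\preceq}$ required by the Comparison Lemma. First I would recall that $\overset{d}{\preceq}$ between two processes is equivalent to the existence of a coupling on a common probability space under which the first process is almost surely dominated coordinate-wise by the second; this is the standard Strassen-type characterisation, and since here all random variables are integer-valued it is elementary. So the key step is to construct such a coupling.

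The construction proceeds generation by generation. Since $\Xi \overset{d}{\preceq} \Xi'$ (as single random variables this just means $\bbP(\Xi > k) \leq \bbP(\Xi' > k)$ for all $k$), there is a coupling $(\Xi, \Xi')$ with $\Xi \leq \Xi'$ almost surely; equivalently, there is a monotone coupling of each individual's offspring count. Now build the two Galton-Watson trees simultaneously: start with $Z_0^\Xi = Z_0^{\Xi'}$ (or with $\lambda \overset{d}{\preceq} \lambda'$ for the initial populations, coupled monotonically), and inductively, given $Z_j^\Xi \leq Z_j^{\Xi'}$, attach to each of the first $Z_j^\Xi$ individuals in the larger population an offspring count $\Xi'_i$ together with a coupled smaller count $\Xi_i \leq \Xi'_i$ which we assign to the corresponding individual in the smaller population; the remaining $Z_j^{\Xi'} - Z_j^\Xi$ individuals in the larger population get independent offspring counts distributed as $\Xi'$. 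Then $Z_{j+1}^\Xi = \sum_{i=1}^{Z_j^\Xi} \Xi_i \leq \sum_{i=1}^{Z_j^\Xi} \Xi'_i \leq \sum_{i=1}^{Z_j^{\Xi'}} \Xi'_i = Z_{j+1}^{\Xi'}$ almost surely, and one checks that the marginal law of $(Z_j^\Xi)_j$ is that of a Galton-Watson process with offspring law $\Xi$ (and similarly for the primed process), because all the offspring counts used are independent with the correct marginals. This almost sure domination immediately yields $(Z_j^\Xi)_{j} \overset{d}{\preceq} (Z_j^{\Xi'})_{j}$, since for any $n_0, \ldots, n_k$ the event $\{Z_0^\Xi > n_0, \ldots, Z_k^\Xi > n_k\}$ is contained in $\{Z_0^{\Xi'} > n_0, \ldots, Z_k^{\Xi'} > n_k\}$ on the coupling space.

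I do not expect a serious obstacle here; the only point that needs a little care is the verification that the generation-by-generation coupling really reproduces the correct joint law of each Galton-Watson process, i.e. that conditionally on the population sizes the offspring variables are independent with the right distribution — this is true because at each generation we use a fresh independent batch of (coupled) offspring pairs, and independence across individuals and generations is preserved. One should also note that the statement as used in Proposition \ref{prop:size of invariant set} needs Assumption 1 in the precise form stated in the Comparison Lemma, namely monotonicity in the initial distribution $\lambda$; this is handled in exactly the same way by coupling $\lambda \overset{d}{\preceq} \lambda'$ monotonically at generation $0$ and then running the offspring coupling above, so the same argument covers both the offspring-law monotonicity claimed in Lemma \ref{lem:stochdomgalton} and the initial-distribution monotonicity (Assumption 1) that is actually invoked.
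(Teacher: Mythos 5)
You have misread the statement of the lemma. The superscripts $\Xi$ and $\Xi'$ on $Z^\Xi_j$ and $Z^{\Xi'}_j$ denote the \emph{initial populations} (note the lowercase $\xi$ in ``offspring distribution $\xi$''): both Galton--Watson processes share the same offspring law $\xi$, and the claim is stochastic monotonicity in the initial condition. This is exactly Assumption~1 of Lemma~\ref{lem:comparisonlemma}, which the paper says Lemma~\ref{lem:stochdomgalton} is there to verify. Your proposal spends most of its effort proving monotonicity in the offspring law, which is not what is claimed or needed, and you mistakenly characterise the initial-condition monotonicity as ``what is actually invoked'' but not what the lemma asserts --- in fact that is precisely what the lemma asserts.

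That said, the coupling argument you sketch does cover the correct statement, and it is in substance the same route as the paper's. The paper makes the coupling entirely explicit: it fixes a single i.i.d.\ offspring array $\boldsymbol{\xi} = (\xi_{j,k})$ shared by the two processes, observes the tree-inclusion monotonicity $Z_j(m,\boldsymbol{\xi}) \le Z_j(m',\boldsymbol{\xi})$ for $m\le m'$, defines $M(\boldsymbol{\xi},n_0,\dots,n_k)$ as the smallest initial population for which all the inequalities $Z_j>n_j$ hold, and then simply uses $\tilde P(\Xi\ge m)\le \tilde P(\Xi'\ge m)$ conditionally on $\boldsymbol{\xi}$ --- without ever putting $\Xi\le\Xi'$ on a common probability space. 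Your version instead first invokes a monotone (Strassen-type) coupling $\Xi\le\Xi'$ a.s.\ and then shares offspring variables, which is equivalent here but a slightly different packaging. The paper's threshold device $M(\boldsymbol{\xi},\dots)$ is a tidy way to avoid appealing to Strassen's theorem; otherwise the two arguments buy the same thing. Correcting the misidentification of what is varied (initial population, not offspring law) is the only real fix needed in your write-up.
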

We are now ready to prove the previous lemma.
\begin{proof}[Proof of Lemma \ref{lem:stochdomgalton}]
We couple the two Galton-Watson process in the same probability space.
In this probability space we define an infinite array of independent random 
variables $\boldsymbol{\xi} = \left(\xi_{j,k}\right)_{j \geq 1, \, k\geq 1}$
having the same distribution 
as $\xi$.
For any $m \in \mathbb{N}$
and $\boldsymbol{\xi}$,
we define the deterministic sequence $\left(  Z_j(m, \boldsymbol{\xi}) \right)_{j \in \mathbb{N}}$,
which represents a Galton-Watson process starting from initial 
condition $m$, where the $k$-th individual
of the $j$-th generation generates an offspring of size $\xi_{j,k}$.
Let us define the sequence precisely.
By definition, we set $Z_0 \left( m, \boldsymbol{\xi} \right) = m$.
We label individuals 
of the first generation according to an arbitrary order
using the integers $1$, $2$, $\ldots$, $m$.
After having labelled individuals of the first generation,
each individual $k$
generates an offspring of size $\xi_{1, k}$.
We set $Z_1 \left( m, \boldsymbol{\xi} \right)$ as the 
number of individuals of the generation $j=1$.
We label individuals of the second generation by using  integer numbers increasing
one by one, $1$, $2$, $3$, $\ldots$, in such a way that, for any two individuals $A$ and $B$,
if the label of the parent of $A$ is smaller than the label of the parent of $B$,
then the label of $A$ is smaller than the label of $B$.
Once  labels are assigned, each individual $k$ of the second generation generates
an offspring of size $\xi_{2,k}$
and variables $\xi_{2, Z_1+1}$, $\xi_{2, Z_1 + 2}$, $\ldots$ remain unused.
We set $Z_2 \left( m, \boldsymbol{\xi} \right)$ as the 
number of individuals of the generation $j=2$.

At any step $j$, we assign labels according to such a rule
and we use the variables $\xi_{j,k}$  for the children of each individual $k$.
If for some $j$,  $Z_j(m, \boldsymbol{\xi}) = 0$,
then by definition $Z_{j+1}(m, \boldsymbol{\xi}) = Z_{j+2}(m, \boldsymbol{\xi}) = \ldots =  0$.
The process can be seen as a tree where each individual
is connected to its children by a directed edge.
By construction, we have that for 
any realization of the array $\boldsymbol{\xi}$,
for all $j \in \mathbb{N}$,
\begin{equation}
\label{eq:monotonicitygalton1}
 Z_j\left( m, \boldsymbol{\xi}) \right)_{j \in \mathbb{N}} \leq  Z_j \left( m^{\prime}, \boldsymbol{\xi}) \right)_{j \in \mathbb{N}}
\end{equation}
whenever $m^{\prime} \geq m$,
as the tree corresponding to the Galton-Watson process with initial condition
$m$ is a sub-graph of the tree corresponding to the Galton-Watson 
process with initial condition $m^{\prime}$.

Let us now fix a $k \in \mathbb{N}$ and a sequence of non-negative integers $n_0$, $n_1$, $\ldots$, $n_k$.
For any fixed $\boldsymbol{\xi}$, we let $M(\boldsymbol{\xi}, n_0, \ldots n_k)$ be the smallest
$m$ such that 
\begin{equation}
\label{eq:condition}
 Z_j(m, \boldsymbol{\xi})  > n_j \mbox{ for all $j$ between $0$ and $k$}. 
 \end{equation}
From the monotonicity property (\ref{eq:monotonicitygalton1}), we have that if (\ref{eq:condition})
holds for $m$, then it holds for all $m^{\prime} \geq m$. 
Let then $\tilde{P}( \, \, \cdot \, \, )$ be the law of the 
array $\boldsymbol{\xi}$, of $\Xi$ and $\Xi^{\prime}$ together.
We have that,
\begin{multline}
\tilde{P}\left(\Xi > n_0, \, Z_1(\Xi, \boldsymbol{\xi}) > n_1, \, \ldots, \, Z_k(\Xi, \boldsymbol{\xi}) > n_{k} \right)  = 
\tilde{P} \left(\Xi \geq M(\boldsymbol{\xi}, n_0, \ldots n_k) \right) = \\
\sum\limits_{\boldsymbol{\xi}} \tilde{P} \left( \Xi \geq M(\boldsymbol{\xi}, n_0, \ldots n_k) \, \,  \big | \, \, \boldsymbol{\xi} \right)
\tilde{P} \left( \, \, \boldsymbol{\xi} \, \, \right)
\leq  
\sum\limits_{\boldsymbol{\xi}} \tilde{P} \left( \Xi^{\prime} \geq M(\boldsymbol{\xi}, n_0, \ldots n_k) \, \,  \big | \, \, \boldsymbol{\xi} \right)
\tilde{P} \left( \, \, \boldsymbol{\xi} \, \, \right) =  \\
\tilde{P} \left( \Xi^{\prime} \geq M(\boldsymbol{\xi}, n_0, \ldots n_k) \right)
= 
\tilde{P}\left(\Xi^{\prime} > n_0, \, Z_1(\Xi^{\prime}, \boldsymbol{\xi}) > n_1, \, \ldots, \, Z_k(\Xi^{\prime}, \boldsymbol{\xi}) > n_{k} \right),
\end{multline}
For the first inequality we used that $\Xi^{\prime}$ is stochastically larger than $\Xi$ by definition.
As the two processes that we constructed in the space of $\tilde{P}( \, \, \cdot \, \, )$ 
have the same distribution as the Galton-Watson processes in the statement of the lemma,
the previous inequality concludes the proof of the claim.
\end{proof}

\section*{Acknowledgements}
This work was supported by the DFG - Deutsche Forschungsgemeinschaft 
(grant number:  BE 5267/1).
The authors are grateful to  Ron Peled for his very interesting comments.


\begin{thebibliography}{10}




\bibitem{Berestycki} N.\ Berestycki:
\textit{Emergence of giant cycles and slowdown 
transition in random transpositions and k-cycles.} 
Electron.\ J.\ Probab.\ 16, 
152-173 (2011).


\bibitem{Betz} 
V. Betz:
\textit{Random permutations of a regular lattice}.
J. Stat. Phys.,
Volume 155, Issue 6, pp 1222-1248,  (2014).

\bibitem{Betz2}
V. Betz, D. Ueltschi, Y. Velenik:
\textit{Random permutations with cycle weights}.
Ann. Appl. Probab. 
Volume 21, pp 312-331, (2011).

\bibitem{Betz3}
V. Betz, D. Ueltschi:
\textit{Spatial random permutations and Poisson-Dirichlet law of cycle lengths}.  
Electron. J. Probab. Volume 16, Issue 41, (2011).

\bibitem{Betz4}
V. Betz, D. Ueltschi:
\textit{Spatial random permutations and infinite cycles}.
Comm. Math. Phys. 285, 469-501 (2009)

\bibitem{Betz5}
V. Betz, D. Ueltschi:
\textit{Spatial random permutations with small cycle weights}.
Probab. Th. Rel. Fields 149, 191-222 (2011).

\bibitem{BU_Ph} V.\ Betz, D. Ueltschi:
\textit{Critical temperature of dilute Bose gases}.
Phys.\ Rev.\ A  {\bf 81}, 023611 (2010).

\bibitem{BoZei} 
L. V. Bogachev, D. Zeindler:
\textit{Asymptotic statistics of cycles in surrogate-spatial permutations}.
Comm. Math. Phys. 334, 1, p. 39-116.

\bibitem{Chayes} 
J.T.Chayes and L.Chayes:
\textit{Ornstein-Zernike Behavior for Self-Avoiding Walks at All
Noncritical Temperatures}. 
Comm. Math. Phys.  105,  2, 221-238 (1986)-

\bibitem{ErcUel} N.M. Ercolani, D.\ Ueltschi: 
\textit{ Cycle structure 
of random permutations with cycle weights}.
Random Struct. Algor. 44, 109-133 

\bibitem{Ioffe}
D. Ioffe:
\textit{Ornstein-Zernike Behaviour and Analyticity of Shapes for Self-Avoiding Walks On Zd}.
Markov Processes and Related Fields 4 (1998), 324-350.


\bibitem{CIV08}
M. Campanino, D. Ioffe, and Y. Velenik: 
\textit{Fluctuation theory of connectivities for subcritical random cluster models}
Ann.\ Probab., Vol. 36, No. 4 (2008), 1287–1321 


\bibitem{Campanino1}
M. Campanino, D. Ioffe, and Y. Velenik:
\textit{Ornstein-Zernike theory for finite range Ising models above $T_c$}.
Probability Theory and Related Fields 125(3):305-349  (2003)


\bibitem{Campanino2}
M. Campanino and D. Ioffe:
\textit{Ornstein-Zernike theory for the Bernoulli bond percolation on $\mathbb{Z}^d$}.
Ann. Probab., Volume 30, Number 2 (2002), 652-682.


\bibitem{Copin}
H. Duminil-Copin, G. Kozma, and A. Yadin:
\textit{Supercritical self-avoiding walks are space-filling}.
Ann. Inst. H. Poincar\'e Probab. Statist.,
Volume 50, Number 2, pp 315-326,  2014.

\bibitem{Copin2}
H. Duminil-Copin, R. Peled,  W. Samotij, and Y. Spinka:
\textit{Exponential decay of loop lengths in the loop $O(n)$ model with large $n$.}
ArXiv: 1412.8326v3. Accepted on Comm. Math. Phys.


\bibitem{Domany}
E. Domany, D. Mukamel, B. Nienhuis, and A. Schwimmer:
Duality relations and equivalences for models with $O(n)$ and cubic symmetry,
Nuclear Physics B 190 (1981), no. 2, 279-287.

\bibitem{Draief}
M. Draief and L. Massouli:
\textit{Epidemics and Rumours in Complex Networks}.
London Mathematical Society Lecture Note Series: 369.


\bibitem{Edwards}
R. G. Edwards and A. D. Sokal:
Generalization of the Fortuin-Kasteleyn-Swendsen-Wang representation and 
Monte Carlo algorithm.
Phys. Rev. D (3) 38 (1988), no. 6, 2009--2012



\bibitem{Gandolfo}
D. Gandolfo, J. Ruiz, D. Ueltschi:
\textit{On a model of random cycles}.
 J. Statist. Phys. 129, 663-676 (2007).

\bibitem{Grosskinsky}
S. Grosskinsky,  A. A. Lovisolo, D. Ueltschi:
\textit{Lattice permutations and Poisson-Dirichlet distribution of cycle lengths}.
J. Statist. Phys. 146, 1105-1121 (2012).


\bibitem{Hammersley1954}
J. M. Hammersley and D. C. Morton:
\textit{Poor man's Monte Carlo}.
J. Roy. Stat. Soc. B, 16:23-38 (1954).

\bibitem{Hammersley1961}
J. M. Hammersley:
\textit{The number of polygons on a lattice}.
Mathematical Proceedings of the Cambridge Philosophical Society,
Volume 57, Issue 3,
 pp. 516-523 (1961).

\bibitem{Harris}
T. E. Harris:
\textit{Nearest-neighbor Markov interaction processes on multidimensional lattices.} 
Adv. Math. 9, pp. 66-89 (1972)







\bibitem{KoteckiMilosUeltschiet} R. Kotecky, P. Milos, D. Ueltschi: 
\textit{The random interchange process on the hypercube}.
Electron.\ Comm.\ Probab.\ 21, no. 4 (2016).

\bibitem{Kovchegov}
Y. Kovchegov:  \textit{Brownian bridge in percolation, self-
avoiding walks and related processes}.
Ph.D thesis, 
Department of Mathematics, Stanford University, 2002. 


\bibitem{Nienhuis}
B. Nienhuis:
\textit{Exact Critical Point and Critical Exponents of $O(n)$ Models in Two Dimensions}.
Phys. Rev. Lett., Volume 49, Number 15, (1982).



\bibitem{MadrasSlade}
N. Madras, G. Slade:
\textit{The Self-Avoiding Walk}.
Birkh\"ause (2013), reprint of the 1996 Edition.
DOI 10.1007/978-1-4614-6025-1.


\bibitem{Ornstein}
L. S. Ornstein, F. Zernike:
Proc. Acad. Sci. (Amst.) 17, 793--806 (1915).

\bibitem{PenOns} 
O.\ Penrose, L.\ Onsager:
{\it Bose-Einstein condensation and liquid Helium}.
Phys.\ Rev.\ 104, 576 (1956)

\bibitem{Schramm} O.\ Schramm:
\textit{Compositions of random transpositions}.
 Isr.\ J.\ Math. 147, 221-243 (2005).




\bibitem{Slade2011} G.\ Slade: \textit{The self-avoiding walk: a brief survey}.
in \textit{Surveys in Stochastic Processes}, J.\ Blath, P.\ Imkeller, S.\ 
Roelly (eds.), European Mathematical Society (2011).

\bibitem{Toth} B.\ Toth: \textit{Improved lower bound on the thermodynamic pressure of the spin $1/2$ Heisenberg ferromagnet}.
 Lett.\ Math.\ Phys.\ 28, 75-84 (1993)


\bibitem{Uel06} D.\ Ueltschi: \textit{Relation between Feynman Cycles and Off-Diagonal 
Long-Range Order}.
Phys.\ Rev.\ Lett.\ 97, 170601 (2006).

\end{thebibliography}
\end{document}